\newcommand{\bu}{\boldsymbol u}
\newcommand{\bv}{\boldsymbol v}
\newcommand{\bU}{\boldsymbol U}
\newcommand{\di}{\mathrm d}
\newtheorem{Theorem}{Theorem}
\newtheorem{lema}{Lemma}
\newcounter{remark}
\def\theremark {\arabic{remark}}
\newenvironment{remark}{\refstepcounter{remark}\par\noindent{\bf Remark\ \theremark}\ }{\par}
\newtheorem{Proof}{Proof}
\newenvironment{proof}{\begin{Proof}\rm}{\hfill $\Box$ \end{Proof}}
\title{POD-ROM methods: from a finite
set of snapshots to continuous-in-time approximations}
\author{Bosco
Garc\'{\i}a-Archilla\thanks{Departamento de Matem\'atica Aplicada
II, Universidad de Sevilla, Sevilla, Spain. Research is supported by
Spanish MCINYU under grants PGC2018-096265-B-I00 and PID2019-104141GB-I00 (bosco@esi.us.es)}
\and  Volker John\thanks{Weierstrass Institute for Applied Analysis and Stochastics,
Leibniz Institute in Forschungsverbund Berlin e. V. (WIAS), Mohrenstr. 39, 10117 Berlin, Germany.
Freie Universit\"at of Berlin,
Department of Mathematics and Computer Science,
Arnimallee 6, 14195 Berlin, Germany, ({john@wias-berlin.de}).}
  \and Julia Novo\thanks{Departamento de
Matem\'aticas, Universidad Aut\'onoma de Madrid, Spain. Research is supported
by Spanish MINECO
under grants PID2019-104141GB-I00 and VA169P20  (julia.novo@uam.es)}}
\date{\today}
\begin{document}
\maketitle

\abstract{This paper studies discretization of time-dependent partial differential equations (PDEs) by proper orthogonal decomposition 
reduced order models (POD-ROMs). Most of the analysis in the literature has been performed on fully-discrete methods using
first order methods in time, typically the implicit Euler time integrator. 
Our aim is to show which kind of error bounds can be obtained using any time integrator, both in the full order model (FOM), applied to
compute the snapshots, and in the POD-ROM method.  To this end, we analyze in this paper the continuous-in-time case for both
the FOM and POD-ROM methods, although the POD basis is obtained from snapshots taken at a discrete (i.e., not continuous) set times.
Two cases for the set of snapshots are considered: The case in which the
snapshots are based on first order divided differences in time and the case in which they are based on temporal derivatives. Optimal pointwise-in-time error bounds {between the FOM and the POD-ROM solutions} are proved for the $L^2(\Omega)$ norm of the error for a semilinear reaction-diffusion model problem. The dependency of the errors on the distance in time between two consecutive snapshots 
and on the tail of the POD eigenvalues is tracked. 
Our detailed analysis allows to show that, in some situations, a small number of snapshots in a given time interval might be sufficient to accurately approximate the solution in the full interval.  
Numerical studies support the error analysis.}
\bigskip

{\bf Keywords:} POD-ROM methods; Method of lines; Snapshots based on finite differences or temporal derivatives;
Pointwise error estimates in time; Semilinear 
reaction-diffusion problems

\section{Introduction}
Although reduced order models (ROMs) use a reduced (tailored) basis obtained from a proper orthogonal decomposition (POD) of a (finite) set of approximations (snapshots) at a discrete set of times $0=t_0<t_1< \ldots <t_M=T$ to approximate a time-dependent problem, in the present paper we derive error bounds of POD-ROM approximations for all~$t\in[0,T]$ and not only for the discrete set of snapshot times. Since POD-ROM methods are based on the same variational formulation as standard finite element methods (FEMs), we consider the continuous-in-time case to study approximations for all~$t\in[0,T]$, or, in other words, the semidiscretization in space of the POD-ROM method's time-dependent problem, which is the POD-ROM version of the FEM semidiscretization in space of  the same time-dependent problem, also known in the literature as full-order model (FOM).

Most of the references in the literature on POD-ROM methods analyze the fully-discrete case in which the implicit Euler method is used as time integrator in the FOM and the corresponding POD-ROM. 
One important question is the possibility of using different 
time integrators for the FOM and for the POD-ROM. This question arises naturally when one is given a set of snapshots without details on how they were obtained.
Another question is the possibility of using different time steps for the FOM and for the POD-ROM.  This arises naturally when the approximation is wanted at different times than those of the snapshots, for example, when one applies  a time integrator with a sufficiently small time step to have  negligible temporal errors (as part of the offline computations) and keep only some of the snapshots, 
which contain in some sense the most essential information, to 
generate the reduced basis. To cover all possible scenarios, in this paper we analyze a continuous-in-time POD method, since although in practice one uses a time integrator, this obtains approximations to the solution of the continuous-in-time problem. Also, for simplicity, we assume that the snapshots are exact solutions of the continuous-in-time FEM approximation, which is a simplification of the case where they have been obtained in practice with high accuracy using small time steps.

For most of the POD-ROM methods studied in the literature, the set of snapshots is based on full order approximations at different time instants. Some approaches include also first order divided differences, as it is the case of the pioneering paper \cite{Ku-Vol}.
It is shown in \cite{samu_et_al} for the heat equation that adding first order divided differences allows to derive pointwise-in-time error estimates. Later in \cite{lo-sin}, it is proved,  also for the heat equation,
that only snapshots based on first order divided differences plus one snapshot for the initial time are needed. In this way the cardinality of the set of snapshots is reduced by half. 
In \cite{nos_der_pod} a set of snapshots based on Galerkin time derivatives, instead of finite differences, is considered
in a POD-ROM method applied to the Navier--Stokes equations. As in \cite{lo-sin}, apart from the initial time, that can be
replaced by the mean value of the snapshots, no other snapshots at different times are necessary. It is proved in 
\cite{nos_der_pod} that the use of time derivatives also allows to obtain pointwise-in-time error estimates.

In the present paper we consider as a model problem an evolutionary diffusion-reaction equation with a nonlinear reactive 
term. 
We study both, the case in which first order divided differences and the case in which Galerkin time derivatives form the set of snapshots. POD projections with respect to the $H_0^1(\Omega)$ norm are considered. Using 
$H_0^1(\Omega)$ orthogonal projections is in some sense the natural approach for parabolic evolutionary problems and gives (theoretically) better bounds in terms of the tail of the eigenvalues than 
$L^2(\Omega)$ projections, see \cite{samu_et_al}. We study how the errors depend on the distance in time between two consecutive snapshots, $\Delta t$,  and
on the tail of the eigenvalues computed with the POD.
For both sets of snapshots optimal pointwise-in-time estimates (for the $L^2(\Omega)$ error in space) are derived. 
It is proved that the error of the POD-ROM behaves as $(\Delta t)^q$ where the value of $q$ depends on the regularity of the continuous-in-time Galerkin approximation, more precisely,
on the size of the $q+1$ temporal derivative. As a consequence, in some cases, a set of snapshots based on a very small number of time instants in a given interval is enough to accurately approximate the solution in the full interval.
Note that obtaining optimal  pointwise-in-time error bounds for POD-ROM methods that do not include  first order divided differences
or time derivatives seems not to be possible with the currently known analytic approaches, 
see \cite{Ku-Vol,samu_et_al,nos_der_pod,lo-sin}.

Second order bounds in time are proved recently in \cite{ap_num_let_bdf2} for a POD-ROM applied to the heat equation 
with snapshots based on first order divided differences and BDF2 as time integrator in the POD-ROM method.
For the present paper, we have decided to focus on the continuous-in-time POD-ROM method. Although the case in 
which a fully-discrete method is applied in the POD-ROM can be analyzed following \cite{nos_der_pod,ap_num_let_bdf2}, the technical details would increase the length of the paper considerably. For this reason, the fully-discrete case is
outside the scope of the present paper. %subject of a different paper. 

The outline of the paper is as follows. In the first section, some preliminaries and notation are introduced. 
Section~\ref{sec:POD} describes the sets of snapshots and their POD decomposition. 
In Section \ref{section4} we prove some preliminary results that will be used in the error analysis of the method in 
Section~\ref{section5}. For the technical lemmas of Section~\ref{section4}  we have decided to prove in detail the simplest case and then indicate how to extend the results to the general case. We think that in this way the ideas of the proofs can be understood more easily. Some numerical results are shown in Section~\ref{sec:numres} for a problem with polynomial
nonlinearity. The numerical studies support the analytical results and discuss how to choose snapshots in the considered
example in a good way.

\section{Preliminaries and notation}
As a model problem we consider the following reaction-diffusion equation 
\begin{equation}\label{eq:model}
\begin{array}{rcll}
u_t(t,x)-\nu\Delta u(t,x)+g(u(t,x))& = &f(t,x) &\quad (t,x)\in (0,T]\times \Omega,\\
u(t,x)&=&0,& \quad  (t,x)\in (0,T]\times \partial \Omega,\\
u(0,x)&=&u^0(x),&\quad  x\in \Omega,
\end{array}
\end{equation}
in a bounded domain $\Omega\subset {\Bbb R}^d$, $d\in \{2,3\}$, where $g$ is a nonlinear smooth function.

We will use standard notation for Sobolev spaces and norms. 
In the sequel we will assume for the solution of \eqref{eq:model}
\begin{equation}\label{eq:bounds_u}
\|u(t)\|_\infty\le C, \quad \forall\ t\in[0,T].
%,\quad \|u_t(t)\|_\infty\le C,\quad \forall\ t\in[0,T].
\end{equation}
We will denote by $C_p$ the constant in the Poincar\'e inequality
\begin{equation}\label{poincare}
\|v\|_0\le C_p\|\nabla v\|_0,\quad v\in H_0^1(\Omega).
\end{equation}

Let us denote by $X_h^l$ a finite element space based on piece-wise continuous polynomials of degree $l$  defined
over a triangulation of $\Omega$ of diameter $h$ 
and by $V_h^l$ the  finite element space based on piece-wise continuous polynomials of degree $l$ that satisfies also the homogeneous Dirichlet boundary conditions at the boundary $\partial \Omega$.

The following inverse inequality holds for all $v_h\in X_h^l$ , see
\cite[Theorem 3.2.6]{Cia78},
\begin{equation}
\label{inv} \| v_{h} \|_{W^{m,p}(K)} \leq c_{\mathrm{inv}}
h_K^{n-m-d\left(\frac{1}{q}-\frac{1}{p}\right)}
\|v_{h}\|_{W^{n,q}(K)},
\end{equation}
$0\le n\le m\le 1$, $1\le q\le p\le \infty$.

In the sequel,  $I_h u \in X_h^l$ will denote
the Lagrange interpolant of a continuous function $u$. The following bound can be found in \cite[Theorem 4.4.4]{brenner-scot}
\begin{equation}\label{cota_inter}
|u-I_h u|_{W^{m,p}(K)}\le c_\text{\rm int} h^{n-m}|\bu|_{W^{n,p}(K)},\quad 0\le m\le n\le l+1,
\end{equation}
where $n>d/p$ when $1< p\le \infty$ and $n\ge d$ when $p=1$.

An inverse inequality for the $L^\infty(\Omega)$ norm of a finite element function is shown next. 

\begin{lema} Let $v_h\in V_h^l$, then it holds
\begin{equation}\label{inv_b}
\|v_h\|_\infty\le C h^{-1/2}\|\nabla v_h\|_0.
\end{equation}
\end{lema}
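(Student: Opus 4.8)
The plan is to chain a cell-wise $L^{p}\to L^{\infty}$ inverse estimate with a global Sobolev embedding, picking the intermediate Lebesgue exponent $p$ so that the powers of $h$ collapse to exactly $h^{-1/2}$ while the embedding $H^{1}(\Omega)\hookrightarrow L^{p}(\Omega)$ still holds for $d\in\{2,3\}$. The correct choice is $p=2d$: for $d=3$ one has $2d=6$, which is precisely the endpoint Sobolev exponent $2d/(d-2)=6$, and for $d=2$ one has $2d=4<\infty$, so in both cases $H^{1}(\Omega)\hookrightarrow L^{2d}(\Omega)$ with a constant depending only on $\Omega$ and $d$.

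First I would apply the inverse inequality \eqref{inv} on each cell $K$ with $n=m=0$, $q=2d$, $p=\infty$. The exponent of $h_{K}$ is then $n-m-d(1/q-1/p)=-1/2$, so that for every cell $K$
\[
\|v_{h}\|_{L^{\infty}(K)}\le c_{\mathrm{inv}}\,h_{K}^{-1/2}\,\|v_{h}\|_{L^{2d}(K)} .
\]
Taking the maximum over the cells, using quasi-uniformity of the triangulation (so that $h_{K}\ge c\,h$ for every $K$), and estimating $\max_{K}\|v_{h}\|_{L^{2d}(K)}\le\bigl(\sum_{K}\|v_{h}\|_{L^{2d}(K)}^{2d}\bigr)^{1/(2d)}=\|v_{h}\|_{L^{2d}(\Omega)}$ yields $\|v_{h}\|_{\infty}\le C\,h^{-1/2}\,\|v_{h}\|_{L^{2d}(\Omega)}$. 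Note that \eqref{inv} applies since $V_{h}^{l}\subset X_{h}^{l}$.

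Then I would invoke the embedding $H^{1}(\Omega)\hookrightarrow L^{2d}(\Omega)$ to bound $\|v_{h}\|_{L^{2d}(\Omega)}\le C\,\|v_{h}\|_{H^{1}(\Omega)}$, and finally the Poincar\'e inequality \eqref{poincare}, which is available because $v_{h}\in V_{h}^{l}\subset H_{0}^{1}(\Omega)$, to replace $\|v_{h}\|_{H^{1}(\Omega)}$ by $C\,\|\nabla v_{h}\|_{0}$. Concatenating the three bounds gives \eqref{inv_b} with a constant depending only on $\Omega$, $d$, $c_{\mathrm{inv}}$, the quasi-uniformity constant, and $C_{p}$.

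All of the substantive steps are standard; the only points that require a little care are bookkeeping ones. One must observe that the endpoint exponent $p=6$ in the embedding $H^{1}\hookrightarrow L^{6}$ for $d=3$ is admissible (the embedding is continuous, merely failing to be compact), and that globalizing the cell-wise estimate — replacing each $h_{K}$ by the single mesh parameter $h$ — relies on quasi-uniformity of the family of triangulations, an assumption already implicit in working with a single diameter $h$ throughout. No inequality beyond those recalled above is needed.
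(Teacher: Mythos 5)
Your proof is correct, but it takes a genuinely different route from the paper's. The paper defines the discrete Laplacian $A_h:V_h^l\to V_h^l$ by $(\nabla v_h,\nabla w_h)=(A_h v_h,w_h)$, invokes the discrete Agmon-type inequality $\|v_h\|_\infty\le C\|\nabla v_h\|_0^{1/2}\|A_h v_h\|_0^{1/2}$ from Heywood--Rannacher, and then bounds $\|A_h v_h\|_0\le c_{\mathrm{inv}}h^{-1}\|\nabla v_h\|_0$ by testing the definition of $A_h$ with $w_h=A_h v_h$ and applying the inverse inequality \eqref{inv}; combining the two square roots yields the $h^{-1/2}$ factor. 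You instead chain the cell-wise $L^{2d}\to L^\infty$ inverse estimate (exponent $-d(1/(2d)-0)=-1/2$, correctly computed) with the Sobolev embedding $H^1(\Omega)\hookrightarrow L^{2d}(\Omega)$ and Poincar\'e. Your version is more self-contained --- it avoids the external citation and the discrete Laplacian entirely, using only \eqref{inv}, \eqref{poincare}, and a textbook embedding --- at the price of making the quasi-uniformity of the mesh explicit (though the paper's own global use of \eqref{inv} implicitly needs the same) and of tying the argument visibly to $d\le 3$ via the criticality of $L^6$ in three dimensions; the paper's route hides that dimensional restriction inside the cited Heywood--Rannacher estimate. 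Both arguments are sound and yield the same constant structure.
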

\begin{proof}
 We first observe that \cite[(4.39)]{heyran1}
\begin{equation}\label{heyran}
\|v_h\|_\infty\le C\|\nabla v_h\|_0^{1/2}\|A_h v_h\|_0^{1/2},
\end{equation}
where $A_h:V_h^l\rightarrow V_h^l$ is defined as
\begin{equation}\label{defi}
(\nabla v_h,\nabla w_h) = (A_h v_h,w_h) = (v_h,A_h w_h),\quad  \forall\ v_h,w_h\in V_h^l.
\end{equation}
Now, taking $w_h=A_h v_h $ in \eqref{defi} and applying \eqref{inv}
\begin{eqnarray*}
\|A_h v_h\|_0^2=(\nabla v_h,\nabla A_h v_h)\le \|\nabla v_h\|_0\|\nabla A_h v_h\|_0\le c_{\mathrm{inv}} h^{-1}\|\nabla v_h\|_0\| A_h v_h\|_0,
\end{eqnarray*}
we get
$$
\|A_h v_h\|_0\le c_{\mathrm{inv}} h^{-1}\|\nabla v_h\|_0.
$$
Inserting this inequality in \eqref{heyran}, we reach \eqref{inv_b}.
\end{proof}

For a function $v\in H^1(I)$, $I=(a,b)$, $a,b\in {\Bbb R}$, $v(a)=v(b)=0$, Agmon's inequality states 
\begin{equation}\label{agmon}
\|v\|_\infty\le \|v\|_0^{1/2}|v|_1^{1/2}.
\end{equation}
This inequality can be derived from  
\begin{eqnarray*}
v^2(x) &=&2\int_a^x  v(s)v'(s)\ \di s\le 2\int_a^x |v(s)||v'(s)|\ \di s,\\
v^2(x) &=&-2\int_x^b  v(s)v'(s)\ \di s\le 2\int_x^b |v(s)||v'(s)|\ \di s,
\end{eqnarray*}
adding both inequalities, applying the Cauchy--Schwarz inequality, and taking the square root.

The semi-discrete Galerkin approximation reads as follows: Find $u_h\ :\ (0,T]\rightarrow V_h^l$ such that
\begin{eqnarray}\label{gal_semi}
(u_{h,t},v_h)+\nu(\nabla u_h,\nabla v_h)+(g(u_h),v_h)=(f,v_h),\quad \forall\ v_h\in V_h^l,
\end{eqnarray}
with $u_h(0)=I_h u^0\in V_h^l.$ 
If the weak solution of \eqref{eq:model} is sufficiently smooth, then the following error estimate is well known,
see for example \cite{bosco-titi-fem},
\begin{equation}\label{cota_gal}
\max_{0\le t\le T}\left(\|(u-u_h)(t)\|_0+h\|(u-u_h)(t)\|_1\right)\le C(u)h^{l+1}.
\end{equation}

Let us denote by $s_h$ the elliptic projection of $u$ that is define as $s_h\ :\ [0,T]\rightarrow V_h^l$
such that
\[
(\nabla s_h(t), \nabla v_h)=(\nabla u(t),\nabla v_h),\quad \forall\ v_h\in V_h^l.
\]
The following error estimation is well known, \cite{brenner-scot}
\begin{equation}\label{cota_sh}
\max_{0\le s\le T}\left(\|(u-s_h)(s)\|_0+h\|(u-s_h)(s)\|_1\right)\le C(u)h^{l+1}.
\end{equation}
Moreover, taking into account that $\|I_h u\|_\infty\le C\|u\|_\infty$ and applying inverse inequality \eqref{inv} together with  \eqref{cota_sh}, \eqref{cota_inter} 
and \eqref{eq:bounds_u} we get for $u(t) \in H^2(\Omega)$
\begin{eqnarray}\label{cota_shinf}
\|s_h(t)\|_\infty&\le& \|s_h(t)-I_h u(t)\|_\infty+\|I_h u(t)\|_\infty\le c_{\mathrm{inv}} h^{-d/2}\|s_h(t)-I_h u(t)\|_0+C\|u(t)\|_\infty\nonumber\\
&\le& C h^{-d/2}h^2\|u(t)\|_2+C\|u(t)\|_\infty\le C, \quad t\in[0,T].
\end{eqnarray}
\iffalse
To derive the error bound for the $H^1(\Omega)$ norm stated in \eqref{cota_gal}, one uses 
\eqref{cota_sh}, \eqref{inv}, and the bound for the $L^2(\Omega)$ error to find
\begin{eqnarray}\label{eq:bound_H1}
\|\nabla(u-u_h)(t)\|_0 &\le& \|\nabla(u-s_h)(t)\|_0 + \|\nabla(s_h-u_h)(t)\|_0 \nonumber\\
&\le&  C(u)h^{l} +  c_{\mathrm{inv}}h^{-1}  \|(s_h-u_h)(t)\|_0 \\
&\le& 
 C(u)h^{l} +  c_{\mathrm{inv}}h^{-1}  \|(u-u_h)(t)\|_0 + c_{\mathrm{inv}}h^{-1}  \|(u-s_h)(t)\|_0
 \le C(u)h^{l}. \nonumber
\end{eqnarray}
\fi

Next, it is shown that the finite element solution belongs to $L^\infty(\Omega)$ for each time. 

\begin{lema}  Let the weak solution of \eqref{eq:model} be sufficiently smooth, then  the following bound holds
\begin{eqnarray}\label{uhuht_inf}
\|u_h(t)\|_\infty\le c_{\mathrm{inf}}, \quad t\in[0,T].
%,\quad \|u_{h,t}(t)\|_\infty\le c_{t,inf}.
\end{eqnarray}
\end{lema}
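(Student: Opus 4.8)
The plan is to use the splitting $u_h = s_h + (u_h - s_h)$ and to estimate the two parts separately. The elliptic projection part is already under control: since the weak solution is assumed sufficiently smooth we have $u(t)\in H^2(\Omega)$, so \eqref{cota_shinf} gives $\|s_h(t)\|_\infty\le C$ uniformly for $t\in[0,T]$. Hence everything reduces to bounding the finite element function $(u_h-s_h)(t)\in V_h^l$ in the $L^\infty(\Omega)$ norm.

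For that difference I would apply the pointwise inverse inequality \eqref{inv_b}, which yields $\|(u_h-s_h)(t)\|_\infty\le C h^{-1/2}\|\nabla(u_h-s_h)(t)\|_0$. The gradient term is then split once more through the exact solution, $\|\nabla(u_h-s_h)(t)\|_0\le \|\nabla(u-u_h)(t)\|_0 + \|\nabla(u-s_h)(t)\|_0$. The first summand is bounded by $C(u)h^l$ because \eqref{cota_gal} controls $h\|(u-u_h)(t)\|_1$ by $C(u)h^{l+1}$, and the second summand is bounded by $C(u)h^l$ by \eqref{cota_sh}. Combining, $\|(u_h-s_h)(t)\|_\infty\le C(u)h^{l-1/2}$ for all $t\in[0,T]$.

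Putting the two estimates together and using that $l\ge 1$, so that $h^{l-1/2}\le 1$ for $h$ small enough, gives $\|u_h(t)\|_\infty\le \|s_h(t)\|_\infty + \|(u_h-s_h)(t)\|_\infty\le c_{\mathrm{inf}}$ uniformly in $t\in[0,T]$, which is the claim. This is a routine argument and there is no genuine obstacle; the only point requiring care is the exponent bookkeeping, namely that the negative power $h^{-1/2}$ produced by the $L^\infty$ inverse inequality is absorbed by the $O(h^l)$ smallness of the $H^1$ distance between the two $O(h^{l+1})$-accurate approximations $u_h$ and $s_h$ of $u$, which is exactly what makes the estimate close when $l\ge 1$.
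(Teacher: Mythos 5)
Your proposal is correct and follows essentially the same route as the paper: split $u_h=s_h+(u_h-s_h)$, control $s_h$ via \eqref{cota_shinf}, and handle $(u_h-s_h)$ with the inverse inequality \eqref{inv_b} together with the $H^1$ bounds from \eqref{cota_gal} and \eqref{cota_sh}, using $l\ge 1$ to absorb the $h^{-1/2}$. No gaps.
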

\begin{proof}
To prove \eqref{uhuht_inf} we add and subtract $s_h$ and apply \eqref{inv_b}, followed by \eqref{cota_gal}, \eqref{cota_sh} and \eqref{cota_shinf}
\[
\|u_h(t)\|_\infty\le \|(u_h-s_h)(t)\|_\infty+\|s_h(t)\|_\infty\le C h^{-1/2}\|\nabla (u_h-s_h)(t)\|_0+\|s_h(t)\|_\infty.
\]
Utilizing now \eqref{cota_gal}, \eqref{cota_sh} and \eqref{cota_shinf} and taking into account $l\ge 1$ we conclude
\[
\|u_h(t)\|_\infty\le C h^{-1/2}h^l+C\le C.
\]
\end{proof}

Performing only a discretization in space, then $u_h$ is the solution of an initial value problem associated to a system of ordinary differential equations. Thus, the temporal regularity of $u_h$ depends essentially on the temporal regularity of the 
nonlinear term $g$ and the right-hand side of the equation $f$. It will be assumed throughout this paper
that the temporal regularity of these functions is sufficiently high so that all temporal derivatives
of $u_h$ and the corresponding norms that appear in the analysis are well defined.

\section{Proper orthogonal decomposition}\label{sec:POD}

This section describes the two studied approaches for computing a basis for the POD-ROM simulations. 

\subsection{Finite differences with respect to time case}\label{sec:pod_fdm}

Let us fix $T>0$ and take $\Delta t=T/M$. Let $t_j=j\Delta t$, $j=0,\ldots,M$ and let $N=M+1$. We define the 
space
$$
\bU = {\rm span}\left\{\sqrt{N}w_0,\tau\frac{u_h(t_1)-u_h(t_0)}{\Delta t},\tau\frac{u_h(t_2)-u_h(t_1)}{\Delta t},\ldots,\tau\frac{u_h(t_M)-u_h(t_{M-1})}{\Delta t}\right\},\\
$$
where $w_0$ is either $w_0=u_h(t_0)$ or $w_0=\overline u_h=\sum_{j=0}^Mu_h(t_j)/(M+1)$,
and $\tau$ is a time scale to make the snapshots dimensionally correct. {The
following analysis only requires $\tau>0$.} 
We denote
$$
y_h^1=\sqrt{N}w_0, \qquad y_h^j = \tau\frac{u_h(t_{j-1})-u_h(t_{j-2})}{\Delta t}, \quad j=2,\ldots,M+1=N,
$$
so that $\bU={\rm span}\left\{y_h^1,y_h^2,\ldots,y_h^N\right\}$.

Let 
$X=H_0^1(\Omega)$ and let us denote the correlation matrix by $K=((k_{i,j}))\in {\Bbb R}^{N\times N}$ with
$$
k_{i,j}=\frac{1}{N}(y_h^i,y_h^j)_X=\frac{1}{N}(\nabla y_h^i,\nabla y_h^j),\quad i,j=1,\ldots,N,
$$
and $(\cdot,\cdot)$ the inner product in $L^2(\Omega)$. We denote by $\lambda_1\ge \lambda_2\ldots\ge \lambda_{d_r}>0$ the positive eigenvalues of $K$ and
by $\bv_1,\ldots,\bv_{d_r}\in {\Bbb R}^N$ the associated eigenvectors. The orthonormal POD basis functions of $\bU$ are
\begin{equation}\label{eq:varphi}
\varphi_k=\frac{1}{\sqrt{N}}\frac{1}{\sqrt{\lambda_k}}\sum_{j=1}^Nv_k^j y_h^j,
\end{equation}
where $v_k^j$ is the $j$ component of the eigenvector $\bv_k$.
For any $1\le r\le d_r$ let 
\begin{equation}\label{eq:bU_r}
\bU^r={\rm span}\left\{\varphi_1,\varphi_2,\ldots,\varphi_r\right\},
\end{equation}
and let us denote by $P^r:H_0^1(\Omega)\rightarrow \bU^r$ the $H_0^1$-orthogonal projection onto $\bU^r$. Then, it holds
\begin{equation}\label{cota_pod}
\frac{1}{N}\sum_{j=1}^N\|\nabla(y_h^j-P^r y_h^j)\|_0^2=\sum_{k={r+1}}^{d_r}\lambda_k.
\end{equation}

\subsection{Time derivatives case} 
Using the same notations as in Section~\ref{sec:pod_fdm}, we define 
\[
\bU = {\rm span}\left\{\sqrt{N}w_0,\tau u_{h,t}(t_0),\tau u_{h,t}(t_1),\ldots,\tau u_{h,t}(t_M)\right\},
\]
where now $N=M+2$,
$$
w_0=u_h(t_0)\quad {\rm or}\quad w_0=\overline u_h=\frac{\sum_{j=0}^Mu_h(t_j)}{M+1},
$$
and $\tau$ is a time scale that makes the snapshots dimensionally correct.
Let us denote by
$$
y_h^1=\sqrt{N} w_0,\qquad y_h^j=u_{h,t}(t_{j-2}),\quad j=2,\ldots,M+2=N,
$$
so that
$\bU={\rm span}\left\{y_h^1,y_h^2,\ldots,y_h^N\right\}.$

Analogous expressions as \eqref{eq:varphi}-\eqref{eq:bU_r} for the POD basis functions and the POD space can be written, taking into account that the value of 
$N$ is different than for the finite difference  case.

Denoting by $P^r:H_0^1(\Omega)\rightarrow \bU^r$ the $H_0^1$-orthogonal projection onto $\bU^r$ it holds
\begin{equation}\label{cota_pod_time_deri}
\frac{1}{N}\sum_{j=1}^N\|\nabla (y_h^j-P^r y_h^j)\|_0^2=\sum_{k={r+1}}^{d_r}\lambda_k.
\end{equation} 

\iffalse
With the same notation as before, 

the orthonormal POD basis function of $\bU$ are
$$
\varphi_k=\frac{1}{\sqrt{N}}\frac{1}{\sqrt{\lambda_k}}\sum_{j=1}^Nv_k^k y_h^j,
$$
where $v_k^j$ is the $j$ component of the eigenvector $\bv_k$.
As before, for any $1\le r\le d_r$ we denote by
$$
\bU^r={\rm span}\left\{\varphi_1,\varphi_2,\ldots,\varphi_r\right\},
$$
and by $P^r:H_0^1(\Omega)\rightarrow \bU^r$ the $X$-orthogonal projection onto $\bU^r$. Then, it holds
\begin{equation}\label{cota_pod_time_deri}
\frac{1}{N}\sum_{j=1}^N\|\nabla (y_h^j-P^r y_h^j)\|_0^2=\sum_{k={r+1}}^{d_r}\lambda_k.
\end{equation} 
\fi

\section{Preliminary results}\label{section4}

\begin{lema}\label{le:bosco}
Let $\varphi :[0,T]\times \Omega\rightarrow {\Bbb R}$ be a regular enough function with  $\frac{\partial^2 \varphi }{\partial t^2}\in L^2(t_0,T,H^1)$. Then, 
the following bound holds
\begin{eqnarray}\label{new_cota_int}
\int_{t_0}^T \|(I-P^r)\varphi (t)\|_{0}^2 \ \di t&\le& \frac{8}{3}\sum_{n=0}^M (\Delta t)\|(I-P^r) \varphi (t_n)\|_0^2\nonumber\\
&\quad&+2C_p^2c_{\rm int}^2 (\Delta t)^4
\int_{t_0}^{T}\left\| \frac{\partial^2 \nabla \varphi (t)}{\partial t^2}\right\|_0^2\ \di t.
\end{eqnarray}
\end{lema}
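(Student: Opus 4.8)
The plan is to compare $\varphi$ with its piecewise-linear-in-time interpolant at the snapshot times, exploiting that the spatial operator $I-P^r$ is a fixed bounded linear map, independent of $t$. Let $\Pi_{\Delta t}\varphi:[t_0,T]\to H^1$ be the continuous function that is affine on each $[t_n,t_{n+1}]$ and agrees with $\varphi$ at $t_0,\dots,t_M$. Since $I-P^r$ is linear and $t$-independent it commutes with this interpolation, $(I-P^r)\Pi_{\Delta t}\varphi=\Pi_{\Delta t}\big((I-P^r)\varphi\big)$. Writing $(I-P^r)\varphi(t)=(I-P^r)\Pi_{\Delta t}\varphi(t)+(I-P^r)(\varphi-\Pi_{\Delta t}\varphi)(t)$ and using $\|a+b\|_0^2\le 2\|a\|_0^2+2\|b\|_0^2$ splits the integrand into an ``interpolant part'' and an ``interpolation-error part'', each treated subinterval by subinterval.

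For the interpolant part, set $p_n:=(I-P^r)\varphi(t_n)$. On $[t_n,t_{n+1}]$ the function $(I-P^r)\Pi_{\Delta t}\varphi(t)$ is the affine function taking the values $p_n$ and $p_{n+1}$ at the endpoints, and integrating the square of $(1-s)\|p_n\|_0+s\|p_{n+1}\|_0$ over $s\in[0,1]$ gives
\[
\int_{t_n}^{t_{n+1}}\|(I-P^r)\Pi_{\Delta t}\varphi(t)\|_0^2\,\di t\;\le\;\frac{\Delta t}{3}\Big(\|p_n\|_0^2+\|p_n\|_0\|p_{n+1}\|_0+\|p_{n+1}\|_0^2\Big).
\]
Bounding the cross term crudely by $\|p_n\|_0^2+\|p_{n+1}\|_0^2$, summing over $n=0,\dots,M-1$, and using that each node is counted at most twice yields $\le\tfrac{4}{3}\sum_{n=0}^M(\Delta t)\|p_n\|_0^2$; the factor $2$ from the split then produces the first term on the right of \eqref{new_cota_int}.

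For the interpolation-error part, I first discard $I-P^r$: as $P^r$ is the $H_0^1$-orthogonal projection, $I-P^r$ is a contraction for $\|\nabla\cdot\|_0$, so Poincar\'e \eqref{poincare} gives $\|(I-P^r)(\varphi-\Pi_{\Delta t}\varphi)(t)\|_0\le C_p\|\nabla(\varphi-\Pi_{\Delta t}\varphi)(t)\|_0$. Since the spatial gradient commutes with the temporal interpolation, $\nabla(\varphi-\Pi_{\Delta t}\varphi)=\nabla\varphi-\Pi_{\Delta t}(\nabla\varphi)$, and applying the one-dimensional analogue of the interpolation bound \eqref{cota_inter} (with $m=0$, $n=2$, mesh size $\Delta t$) in the time variable to the Bochner function $t\mapsto\nabla\varphi(t)\in L^2(\Omega)^d$ on each $[t_n,t_{n+1}]$ gives $\int_{t_n}^{t_{n+1}}\|\nabla(\varphi-\Pi_{\Delta t}\varphi)\|_0^2\,\di t\le c_{\rm int}^2(\Delta t)^4\int_{t_n}^{t_{n+1}}\|\partial_{tt}\nabla\varphi\|_0^2\,\di t$. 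Summing over $n$ and multiplying by $2C_p^2$ produces the second term, and adding the two contributions proves \eqref{new_cota_int}.

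The only genuinely delicate point is this last step: one must check (or cite) that the scalar Sobolev interpolation estimate \eqref{cota_inter} carries over verbatim to $L^2(\Omega)$-valued functions of $t$, the hypothesis $\frac{\partial^2\varphi}{\partial t^2}\in L^2(t_0,T,H^1)$ being exactly what guarantees the right-hand side is finite. Everything else is elementary: the commutation of the time interpolant with the fixed operator $I-P^r$ (and with the spatial gradient), the exact integral of a quadratic, and the $H_0^1$-contractivity of $I-P^r$.
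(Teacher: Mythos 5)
Your proof is correct and follows essentially the same route as the paper: split $\varphi$ on each $[t_n,t_{n+1}]$ into its linear-in-time Lagrange interpolant plus remainder, bound the interpolant part by integrating the quadratic at the nodes (your exact-integral-plus-cross-term argument yields the same $\tfrac{8}{3}$ as the paper's pointwise Young inequality), and bound the remainder via Poincar\'e, the $H_0^1$-contractivity of $I-P^r$, and the temporal interpolation estimate applied to $\nabla\varphi$. The paper likewise invokes \eqref{cota_inter} ``in time instead of in space'' for the Bochner-valued function without further comment, so your final remark is consistent with its level of rigor.
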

\begin{proof}
For any $t\in[t_n,t_{n+1}]$ we split
$$
\varphi (t)=I_{2,t} \varphi (t)+R_{2,t} \varphi (t),
$$
where $I_{2,t} \varphi $ is the Lagrange linear interpolant in time defined as
$$
I_{2,t} \varphi (t)=\varphi (t_n)\frac{(t_{n+1}-t)}{\Delta t}+\varphi (t_{n+1})\frac{(t-t_n)}{\Delta t},
$$
and $R_{2,t} \varphi $ is the error
$$
%R_{2,t} \varphi (s)=\frac{(s-t_k)(s-t_{k+1})}{2}\varphi _{tt}(\xi_s),\quad \xi_s\in[t_{k},t_{k+1}].
R_{2,t} \varphi (t)=\varphi (t)-I_{2,t} \varphi (t).
$$
Applying Poincar\'e inequality \eqref{poincare} and taking into account that for any function $v\in H_0^1(\Omega)$, $\|\nabla(I-P^r)  v\|_0\le \|\nabla v\|_0$ we get
\begin{eqnarray*}
\|(I-P^r)\varphi (t)\|_0^2&\le& 2 \|(I-P^r) I_{2,t} \varphi (t)\|_0^2+ 2\|(I-P^r) R_{2,t} \varphi (t)\|_0^2\nonumber\\
&\le& 2 \|(I-P^r) I_{2,t} \varphi (t)\|_0^2+ 2C_p^2\|\nabla (I-P^r) R_{2,t} \varphi (t)\|_0^2\nonumber\\
&\le& 2 \|(I-P^r) I_{2,t} \varphi (t)\|_0^2+ 2C_p^2\|\nabla R_{2,t} \varphi (t)\|_0^2\nonumber\\
&=&2 \|(I-P^r) I_{2,t} \varphi (t)\|_0^2+ 2C_p^2\| R_{2,t} \nabla \varphi (t)\|_0^2.
\end{eqnarray*}
It follows that 
\begin{eqnarray}\label{eq:bosco1}
\int_{t_n}^{t_{n+1}}\|(I-P^r)\varphi (t)\|_0^2 \ \di t &\le& 2 \int_{t_n}^{t_{n+1}} \|(I-P^r) I_{2,t} \varphi (t)\|_0^2\ \di t\nonumber\\
&&+ 2C_p^2\int_{t_n}^{t_{n+1}}\| R_{2,t} \nabla \varphi (t)\|_0^2\ \di t.
\end{eqnarray}
To bound the first term on the right-hand side of \eqref{eq:bosco1} we observe that
\begin{eqnarray*}
\|(I-P^r) I_{2,t} \varphi (t)\|_0^2&\le& 2\int_\Omega |(I-P^r)\varphi (t_n)|^2\left(\frac{(t_{n+1}-t)}{\Delta t}\right)^2\ \di x
\nonumber\\
&&+2 \int_\Omega |(I-P^r)\varphi (t_{n+1})|^2\left(\frac{(t-t_n)}{\Delta t}\right)^2\ \di x,
\end{eqnarray*}
so that 
\begin{eqnarray}\label{eq:bosco2}
\int_{t_n}^{t_{n+1}}\|(I-P^r)I_{2,t}\varphi (t)\|_0^2\ \di t \le \frac{2}{3}\Delta t \|(I-P^r)\varphi (t_n)\|_0^2+ \frac{2}{3} \Delta t \|(I-P^r)\varphi (t_{n+1})\|_0^2.
\end{eqnarray}
For the second term on the right-hand side of \eqref{eq:bosco1}  we can apply the general bound \eqref{cota_inter} for the
Lagrange interpolant, in time instead of in space, to get 
\begin{eqnarray}\label{eq:bosco3}
\lefteqn{\int_{t_n}^{t_{n+1}}\| R_{2,t} \nabla \varphi (t)\|_0^2\ \di t}\nonumber\\
&=&\int_{t_n}^{t_{n+1}}\int_\Omega | R_{2,t}\nabla \varphi (t,x) |^2\ \di x \di t= \int_\Omega \int_{t_n}^{t_{n+1}}| (I-I_{2,t})\nabla \varphi (t,x)|^2\ \di t \di x\nonumber\\
&\le& c_\text{\rm int}^2 (\Delta t)^4\int_\Omega \int_{t_n}^{t_{n+1}} \left| \frac{\partial^2 \nabla \varphi (t,x)}{\partial t^2}\right|^2\ \di t \di x= c_\text{\rm int}^2 (\Delta t)^4\int_{t_n}^{t_{n+1}}  \left\| \frac{\partial^2 \nabla \varphi (t)}{\partial t^2}\right\|_0^2\ \di t.
\end{eqnarray}
Inserting \eqref{eq:bosco2} and \eqref{eq:bosco3} into \eqref{eq:bosco1} yields
\begin{eqnarray}\label{eq:bosco4}
\int_{t_n}^{t_{n+1}}\|(I-P^r)\varphi (t)\|_0^2\ \di t   &\le& \frac{4}{3}\Delta  t\|(I-P^r)\varphi (t_n)\|_0^2+\frac{4}{3}\Delta t\|(I-P^r)\varphi (t_{n+1})\|_0^2\nonumber\\
&&+2C_p^2 c_\text{\rm int}^2 (\Delta t)^4\int_{t_n}^{t_{n+1}}  \left\| \frac{\partial^2 \nabla \varphi (t)}{\partial t^2}\right\|_0^2\ \di t.
\end{eqnarray}
Writing
$$
\int_{t_0}^T \|(I-P^r)\varphi (t)\|_0^2 \ \di t = \sum_{n=0}^{M-1}\int_{t_n}^{t_{n+1}}\|(I-P^r)\varphi (t)\|_0^2\ \di t
$$
and applying \eqref{eq:bosco4} we reach \eqref{new_cota_int}.
\end{proof}

\begin{remark}\label{rem:rem1}
Assuming enough regularity for $\varphi $ and using higher order Lagrange interpolants in time instead of $I_{2,t}$ in the
proof of Lemma \ref{le:bosco}, one can get analogous results increasing the power of $\Delta t$. More precisely, one can obtain
the following generalization of \eqref{new_cota_int} for $q\ge 2$
\begin{equation}\label{new_cota_int_orp}
\int_{t_0}^T \|(I-P^r)\varphi (t)\|_0^2 \ \di t\le C\sum_{n=0}^M (\Delta t)\|(I-P^r) \varphi (t_n)\|_0^2+CC_p^2 (\Delta t)^{2q}
\int_{t_0}^{T}\left\| \frac{\partial^q \nabla \varphi (t)}{\partial t^q}\right\|_0^2 \ \di t,
\end{equation}
for a constant that depends only on~$q$ and~$c_{\rm int}$.
\end{remark}

\subsection{Finite differences with respect to time case}

The proof of the following lemma is derived with the same arguments as used in the proof of~\cite[Lemma 2]{ap_num_let_bdf2}  (see also \cite{samu_et_al}).
\begin{lema}\label{le:maxPr_FD} Let $\tilde C=1$ if $w_0=u_h(t_0)$ and $\tilde C=4$ if~$w_0=\overline u_h$.
The following bound holds
\begin{eqnarray}\label{max_dif}
\max_{0\le n\le M}\|u_h^n-P^ru_h^n\|_0^2&\le& \left(2+4\tilde C \frac{T^2}{\tau^2}\right)C_p^2\sum_{k={r+1}}^{d_r}\lambda_k,\nonumber\\
\max_{0\le n\le M}\|\nabla(u_h^n-P^ru_h^n)\|_0^2&\le& \left(2+4\tilde C \frac{T^2}{\tau^2}\right)\sum_{k={r+1}}^{d_r}\lambda_k.
\end{eqnarray}
As a consequence, since $\Delta t =T/M$ and  $(M+1)/M\le 2$, it follows that
\begin{equation}\label{max_dif_promedio}
{\Delta t }\sum_{n=0}^M \|u_h^n-P^ru_h^n\|_0^2\le T\left(4+8\tilde C \frac{T^2}{\tau^2}\right)C_p^2\sum_{k={r+1}}^{d_r}\lambda_k.
\end{equation}
\end{lema}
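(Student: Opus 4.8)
The plan is to reconstruct the POD-projection error for each individual snapshot $u_h^n = u_h(t_n)$ from the POD-projection errors of the actual snapshots $y_h^j$ that span $\bU$. In the finite-difference case the $u_h^n$ are recovered from the divided differences by telescoping: writing $\delta_j = \tau(u_h(t_j)-u_h(t_{j-1}))/\Delta t = y_h^{j+1}$ for $j=1,\dots,M$, we have $u_h(t_n) = u_h(t_0) + \frac{\Delta t}{\tau}\sum_{j=1}^n \delta_j$, and $u_h(t_0)$ is expressible from $y_h^1=\sqrt N w_0$ together with the $\delta_j$ if $w_0=\overline u_h$ (since $\overline u_h = u_h(t_0) + \frac{1}{M+1}\sum_{j=1}^M (M+1-j)\tfrac{\Delta t}{\tau}\delta_j$, or similar), and trivially if $w_0=u_h(t_0)$. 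The key point is that $I-P^r$ is linear and, being a projection in the $H_0^1$ inner product, is a contraction in the $H_0^1$-seminorm. So I would apply $I-P^r$ to these identities, use the triangle inequality together with Cauchy--Schwarz across the (at most $M$) terms in the sum, and thereby bound $\|\nabla(u_h^n - P^r u_h^n)\|_0^2$ by a constant times $\sum_{j=1}^N \|\nabla(y_h^j - P^r y_h^j)\|_0^2 = N\sum_{k=r+1}^{d_r}\lambda_k$, using \eqref{cota_pod}. Tracking the constants, the factor $\Delta t/\tau$ appears squared, and $n\,\Delta t \le M\,\Delta t = T$, which after the Cauchy--Schwarz step ($n$ terms, each with weight $(\Delta t/\tau)^2$) produces the $T^2/\tau^2$ dependence; the case $w_0=\overline u_h$ costs an extra factor $4$, giving $\tilde C$. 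Then the Poincar\'e inequality \eqref{poincare} converts the $H_0^1$-seminorm bound into the $L^2$ bound, supplying the $C_p^2$.

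For the two displayed bounds in \eqref{max_dif} this is essentially the whole argument; the $\nabla$-version is obtained first (no $C_p$), and the $L^2$-version follows by \eqref{poincare}. The ``$2+$'' in the constant accounts for the contribution of the $w_0$ term itself (handled via $\|\nabla(y_h^1 - P^r y_h^1)\|_0 / \sqrt N \le (\sum\lambda_k)^{1/2}$ and the normalization $y_h^1 = \sqrt N w_0$), separated from the $4\tilde C T^2/\tau^2$ coming from the telescoping sum; I would keep these two sources of error apart when applying the triangle inequality so the constants come out exactly as stated. Since the precise bookkeeping of these constants is exactly what \cite[Lemma 2]{ap_num_let_bdf2} carries out in the analogous BDF2 setting, I would simply mirror that computation here.

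Finally, the consequence \eqref{max_dif_promedio} is immediate: since each summand is at most the maximum,
$$
\Delta t \sum_{n=0}^M \|u_h^n - P^r u_h^n\|_0^2 \le (M+1)\,\Delta t \max_{0\le n\le M}\|u_h^n - P^r u_h^n\|_0^2,
$$
and $(M+1)\,\Delta t = (M+1)T/M = \tfrac{M+1}{M}T \le 2T$, so multiplying the first bound in \eqref{max_dif} by $2T$ gives the factor $T(4 + 8\tilde C T^2/\tau^2)C_p^2$.

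The main obstacle, such as it is, is purely the constant-tracking in the telescoping step: one must choose the splitting of $u_h(t_n)$ so that exactly the constants $2$, $4$, and $\tilde C\in\{1,4\}$ emerge, and in particular verify that the coefficients $(M+1-j)/(M+1)$ (or their analogues) arising when $w_0 = \overline u_h$ are bounded by $1$ so that the Cauchy--Schwarz bound over $j$ contributes the clean $T^2/\tau^2$ rather than something larger. No genuinely new idea is needed beyond the observation that $I-P^r$ is an $H_0^1$-seminorm contraction and the combinatorial identity recovering the snapshots from their divided differences.
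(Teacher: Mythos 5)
Your argument is correct and is essentially the same telescoping-plus-Cauchy--Schwarz computation that the paper invokes by reference to \cite[Lemma 2]{ap_num_let_bdf2}: recover $u_h(t_n)$ from $y_h^1/\sqrt{N}$ and the divided-difference snapshots, use that $I-P^r$ contracts the $H_0^1$-seminorm together with \eqref{cota_pod} (noting $(\Delta t)^2 M N\le 2T^2$ and the extra factor $4$ from the $\overline u_h$ expansion), and then apply Poincar\'e for the $L^2$ bound and the trivial $(M+1)\Delta t\le 2T$ estimate for \eqref{max_dif_promedio}. The constant bookkeeping you outline does reproduce $2+4\tilde C\,T^2/\tau^2$ exactly, so nothing is missing.
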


\begin{lema}\label{le:tra_cota_pro_b}
For each $q\ge 2$, there exist a constant~$C$ such that for $1\le r\le {d_r}$ the following bound holds

 \begin{equation}\label{eq:tra:pro_b}
\int_{t_0}^{T} \|(I-P^r)u_h(t)\|_0^2\ \di t\le CC_p^2 \left(T\left(1+ \frac{T^2}{\tau^2}\right)\sum_{k={r+1}}^{d_r}\lambda_k 
+(\Delta t)^{2q}
\int_{t_0}^{T}\left\| \frac{\partial^{q} \nabla u_h(t)}{\partial t^q}\right\|_0^2 \ \di t\right),
\end{equation}
under the assumption that $u_h$ is smooth enough such that the last 
term in \eqref{eq:tra:pro_b} is well defined. 

\end{lema}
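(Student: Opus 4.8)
The plan is to obtain \eqref{eq:tra:pro_b} as a direct composition of two results already established in the excerpt: the continuous-to-discrete interpolation bound of Lemma~\ref{le:bosco}, in the higher-order form of Remark~\ref{rem:rem1}, and the averaged discrete-in-time projection bound \eqref{max_dif_promedio} of Lemma~\ref{le:maxPr_FD}.

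First I would apply \eqref{new_cota_int_orp} with $\varphi=u_h$, which is legitimate for every $q\ge2$ under the standing assumption that $u_h$ is smooth enough in time (see the paragraph after Lemma~2 and the hypothesis of the present lemma), so that $\partial^q u_h/\partial t^q\in L^2(t_0,T,H^1)$. Since $P^r$ is linear, $(I-P^r)u_h(t_n)=u_h^n-P^r u_h^n$ in the notation of Lemma~\ref{le:maxPr_FD}, and \eqref{new_cota_int_orp} gives
\[
\int_{t_0}^{T}\|(I-P^r)u_h(t)\|_0^2\,\di t\le C\,\Delta t\sum_{n=0}^{M}\|u_h^n-P^r u_h^n\|_0^2+CC_p^2(\Delta t)^{2q}\int_{t_0}^{T}\left\|\frac{\partial^q\nabla u_h(t)}{\partial t^q}\right\|_0^2\,\di t,
\]
with $C$ depending only on $q$ and $c_{\rm int}$. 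Then I would bound the first sum on the right by \eqref{max_dif_promedio}, namely $\Delta t\sum_{n=0}^M\|u_h^n-P^r u_h^n\|_0^2\le T(4+8\tilde C\,T^2/\tau^2)C_p^2\sum_{k=r+1}^{d_r}\lambda_k$; since $\tilde C\le4$ one has $4+8\tilde C\,T^2/\tau^2\le C(1+T^2/\tau^2)$, and absorbing this constant into the generic $C$ produces exactly \eqref{eq:tra:pro_b}.

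There is no genuine obstacle in this argument; it is essentially a two-line concatenation of Lemma~\ref{le:bosco}/Remark~\ref{rem:rem1} and Lemma~\ref{le:maxPr_FD}. The only points that require a little care are bookkeeping the constants: the Poincar\'e constant $C_p$ and the interpolation constant $c_{\rm int}$ that appear in the generalized estimate \eqref{new_cota_int_orp} must be collected into a single $q$-dependent constant, and one must check that the two regularity requirements invoked along the way (the bound $\partial_t^q u_h\in L^2(t_0,T,H^1)$ needed for the interpolation step, and the smoothness implicitly used for \eqref{max_dif_promedio}) are both subsumed by the standing hypothesis on the temporal regularity of $u_h$. If desired, the case $q=2$ can alternatively be handled directly through \eqref{new_cota_int} with its explicit constants $8/3$ and $2$, but using \eqref{new_cota_int_orp} uniformly in $q\ge2$ is cleaner.
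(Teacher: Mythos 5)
Your argument is exactly the paper's proof: apply the generalized interpolation bound \eqref{new_cota_int_orp} with $\varphi=u_h$ and then control the resulting discrete sum via \eqref{max_dif_promedio}, absorbing the constants $\tilde C$, $C_p$, and $c_{\rm int}$ into a generic $q$-dependent $C$. The proposal is correct and coincides with the intended two-line concatenation of Remark~\ref{rem:rem1} and Lemma~\ref{le:maxPr_FD}.
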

\begin{proof}
We apply \eqref{new_cota_int_orp} to $\varphi(t)=u_h(t)$ and use \eqref{max_dif_promedio} for bounding the 
first term on the right-hand side of \eqref{new_cota_int_orp}.
\end{proof}

\begin{lema}\label{le:deri_by_dif}
Let $g\ :\ [0,T]\times \Omega\rightarrow {\Bbb R}$ with $\frac{\partial^3 g}{\partial t^3}\in L^2(0,T;L^2)$,
then there exists a constant $C>0$ such that the following bound holds
\begin{eqnarray}\label{deri_dif}
\sum_{n=0}^M(\Delta t )\|g_t^n\|_0^2\le C \sum_{n=1}^M (\Delta t)\|D g^n\|_0^2+C (\Delta t )^4 \int_{t_0}^T\left\|\frac{\partial^{3} g(t)}{\partial t^{3}}\right\|_0^2\ \di t,
\end{eqnarray}
where $g_t^n=g_t(t_n)$ and $D g^n=(g^n-g^{n-1})/\Delta t$.
\end{lema}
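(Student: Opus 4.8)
The plan is to relate the temporal derivative $g_t^n = g_t(t_n)$ to the backward divided difference $D g^n = (g^n - g^{n-1})/\Delta t$ by Taylor expansion with integral remainder, and then sum the resulting pointwise estimates multiplied by $\Delta t$. First I would write, for each $n=1,\ldots,M$,
\begin{equation*}
g^{n-1} = g^n - \Delta t\, g_t^n + \int_{t_{n-1}}^{t_n} (s - t_{n-1})\, g_{tt}(s)\ \di s,
\end{equation*}
which rearranges to
\begin{equation*}
g_t^n = D g^n + \frac{1}{\Delta t}\int_{t_{n-1}}^{t_n} (s - t_{n-1})\, g_{tt}(s)\ \di s.
\end{equation*}
Taking the $L^2(\Omega)$ norm, squaring, and using $(a+b)^2 \le 2a^2 + 2b^2$ together with the Cauchy--Schwarz inequality in time to bound the remainder integral, one gets
\begin{equation*}
\|g_t^n\|_0^2 \le 2\|D g^n\|_0^2 + \frac{C}{\Delta t}\int_{t_{n-1}}^{t_n}\|g_{tt}(s)\|_0^2\ \di s,
\end{equation*}
and multiplying by $\Delta t$ and summing over $n=1,\ldots,M$ yields $\sum_{n=1}^M (\Delta t)\|g_t^n\|_0^2 \le 2\sum_{n=1}^M(\Delta t)\|D g^n\|_0^2 + C\int_{t_0}^T \|g_{tt}(s)\|_0^2\ \di s$. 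This already has the right left-hand side except for the missing $n=0$ term, and the right-hand side is in terms of $\|g_{tt}\|_0^2$ rather than $\|\partial^3 g/\partial t^3\|_0^2$ with the stated power $(\Delta t)^4$, so two further steps are needed.

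To upgrade $\|g_{tt}\|_0^2$ to the third derivative with the factor $(\Delta t)^4$, the idea is to not stop at first order: expand $g^{n-1}$ to one more order, so the remainder involves $g_{ttt}$, but then the term $\frac{\Delta t^2}{2} g_{tt}^n$ appears on the right and must itself be controlled. The cleaner route is to observe that the quantity $e^n := g_t^n - D g^n$ is a bounded linear functional annihilating quadratics in $t$, hence by the Bramble--Hilbert / Peano kernel argument (the same principle as \eqref{cota_inter} used in time, exactly as in Lemma~\ref{le:bosco}) one has $\|e^n\|_0^2 \le C (\Delta t)^3 \int_{t_{n-1}}^{t_n}\|\partial^3 g(s)/\partial t^3\|_0^2\ \di s$; multiplying by $\Delta t$ and summing gives a contribution $C(\Delta t)^4\int_{t_0}^T\|\partial^3 g/\partial t^3\|_0^2\ \di s$, which is exactly the second term in \eqref{deri_dif}. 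Then $\sum_{n=1}^M(\Delta t)\|g_t^n\|_0^2 \le 2\sum_{n=1}^M (\Delta t)\|Dg^n\|_0^2 + 2\sum_{n=1}^M(\Delta t)\|e^n\|_0^2$ delivers the bound over $n=1,\ldots,M$.

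Finally I would absorb the $n=0$ term: since $g_t^0 = g_t(t_0)$, write $g_t^0 = g_t^1 - \int_{t_0}^{t_1} g_{tt}(s)\ \di s$, so $(\Delta t)\|g_t^0\|_0^2 \le 2(\Delta t)\|g_t^1\|_0^2 + 2\Delta t \int_{t_0}^{t_1}\|g_{tt}(s)\|_0^2\ \di s \cdot \Delta t$ (Cauchy--Schwarz on the integral), and $\|g_t^1\|_0^2$ is already controlled by the previous step, while the remaining integral of $\|g_{tt}\|_0^2$ over $[t_0,t_1]$ can be bounded by $(\Delta t)^2 \int_{t_0}^{t_1}\|\partial^3 g/\partial t^3\|_0^2\ \di s + \,(\text{something involving }\|g_{tt}\|\text{ at an interior point})$; alternatively, and more simply, bound $g_{tt}$ on $[t_0,t_1]$ in terms of $Dg^1$-type differences plus $g_{ttt}$ so that everything lands in the two allowed terms. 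The main obstacle is precisely this bookkeeping at the endpoint $n=0$ and making sure every intermediate quantity (the stray $g_{tt}^n$ terms from naive Taylor expansion) is re-expressed using only $\|Dg^n\|_0$ and $\|\partial^3 g/\partial t^3\|_0$ with the correct powers of $\Delta t$; using the Peano-kernel formulation of $g_t^n - Dg^n$ rather than a raw Taylor expansion sidesteps most of this difficulty, so I would organize the proof around that.
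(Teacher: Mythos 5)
There is a genuine gap at the heart of your argument. You organize the proof around the single functional $e^n = g_t^n - Dg^n$ and claim it ``annihilates quadratics in $t$,'' from which you deduce $\|e^n\|_0^2 \le C(\Delta t)^3\int_{t_{n-1}}^{t_n}\|\partial^3 g/\partial t^3\|_0^2\,\di s$. This is false: the backward divided difference is only \emph{first}-order consistent with the derivative at the right endpoint $t_n$. Test it on $g(t)=t^2$: $g_t(t_n)=2t_n$ while $Dg^n=(t_n^2-t_{n-1}^2)/\Delta t=2t_n-\Delta t$, so $e^n=\Delta t\neq 0$. (It is the \emph{midpoint} $t_{n-1/2}$, not $t_n$, at which $Dg^n$ is second-order accurate, but the lemma needs the derivative at the nodes.) Consequently the Bramble--Hilbert/Peano argument only gives $\|e^n\|_0^2\le C\,\Delta t\int_{t_{n-1}}^{t_n}\|g_{tt}\|_0^2\,\di s$, and after multiplying by $\Delta t$ and summing you land exactly where your first Taylor expansion already put you: a remainder of the form $C(\Delta t)^2\int_{t_0}^T\|g_{tt}\|_0^2\,\di t$, which is strictly weaker than the claimed $C(\Delta t)^4\int_{t_0}^T\|\partial^3 g/\partial t^3\|_0^2\,\di t$ and would not support the higher-order rates the lemma feeds into later.

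The missing idea --- and the one the paper uses --- is that $g_t^n$ must be approximated by a \emph{linear combination of two consecutive divided differences}, namely $\tfrac32 Dg^n-\tfrac12 Dg^{n-1}=D^2g^n$, the derivative at $t_n$ of the quadratic Lagrange interpolant on $t_{n-2},t_{n-1},t_n$; this combination does annihilate quadratics, so the Peano remainder involves $\partial^3 g/\partial t^3$ with the factor $(\Delta t)^3$ per node, and summing with weight $\Delta t$ produces the $(\Delta t)^4$. Since the right-hand side of \eqref{deri_dif} is a sum over \emph{all} $\|Dg^n\|_0^2$, replacing $Dg^n$ by such a combination costs only a constant. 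The endpoints $n=0,1$ then need their own second-order one-sided formulas ($\tfrac32 Dg^1-\tfrac12 Dg^2$ and $\tfrac12 Dg^1+\tfrac12 Dg^2$ respectively), which is cleaner than the $g_t^0=g_t^1-\int g_{tt}$ patch you sketch, as the latter reintroduces an uncontrolled $\int\|g_{tt}\|_0^2$ term. Your instinct to use a Peano-kernel formulation is sound; it is the choice of functional that must change.
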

\begin{proof}
Let $n\ge 2$ and let us denote by $D^2g^n=((3/2)g^n-2 g^{n-1}+(1/2)g^{n-2})/(\Delta t)$, the value at $t_n$ of the derivative of the quadratic Lagrange interpolant on the nodes $t_{n-2}$, $t_{n-1}$, and~$t_n$. Using Taylor series expansion
with integral remainder, we obtain
\begin{eqnarray*}
g_t^n&=&\frac{3}{2}D g^n-\frac{1}{2}Dg^{n-1}+(g_t^n-D^2 g^n)\\
&=&\frac{3}{2}D g^n-\frac{1}{2}Dg^{n-1}+
\frac{1}{2\Delta t} \int_{t_{n-2}}^{t_n}\left(2(t-t_{n-1})_{+}^2 -\frac{1}{2}(t-t_{n-2})^2\right)\frac{\partial^3 g(t)}{\partial t^3}\ \di t,
\end{eqnarray*}
where $x_{+}=\max(0,x)$,  for $x\in {\Bbb R}$. Applying the Cauchy--Schwarz inequality followed by a straightforward calculation gives
\begin{eqnarray*}
|g_t^n|&\le&\frac{3}{2}\left(|D g^n|+|Dg^{n-1}|\right)+
\left(\frac{1}{\sqrt{5}} + \frac{\sqrt{2}}{\sqrt{5}}\right) (\Delta t)^{3/2} \left(\int_{t_{n-2}}^{t_n} \left(\frac{\partial^3 g(t)}{\partial t^3}\right)^2\ \di t\right)^{1/2}\nonumber\\&\le&
\frac{3}{2}\left(|D g^n|+|Dg^{n-1}|\right)+\frac{5}{2\sqrt{5}} (\Delta t)^{3/2} \left(\int_{t_{n-2}}^{t_n}\left(\frac{\partial^3 g(t)}{\partial t^3}\right)^2\ \di t\right)^{1/2},\ n\ge 2.
\end{eqnarray*}
Hence, it is 
\[
|g_t^n|^2\le C \left(|D g^n|^2+|Dg^{n-1}|^2\right)+C (\Delta t)^3\int_{t_{n-2}}^{t_n} \left(\frac{\partial^3 g(t)}{\partial t^3}\right)^2\ \di t,
\]
so that for $2\le n\le M$
\begin{eqnarray}\label{deri_dif2}
\|g_t^n\|_0^2\le C\left(\|D g^n\|_0^2+\|Dg^{n-1}\|_0^2\right)+C (\Delta t)^3\int_{t_{n-2}}^{t_n} \left\|\frac{\partial^{3} g(t)}{\partial t^{3}}\right\|_0^2\ \di t.
\end{eqnarray}

For $n=1$ 
we can approximate
$g_t^1$ by $\frac{1}{2}D g^1+\frac{1}{2}D g^2$ and for $n=0$ we can approximate $g_t^0$ by
$\frac{3}{2}D g^1-\frac{1}{2}D g^2$, both approximations being of second order consistency. Arguing similarly as above, yields for $n=0,1$
\begin{equation}\label{deri_dif3}
\|g_t^n\|_0^2\le C\left(\|D g^1\|_0^2+\|Dg^{2}\|_0^2\right)+C (\Delta t)^3\int_{t_{0}}^{t_2} \left\|\frac{\partial^{3} g(t)}{\partial t^{3}}\right\|_0^2\ \di t.
\end{equation}

From \eqref{deri_dif2} and \eqref{deri_dif3} it follows that 
\begin{eqnarray*}
\sum_{n=0}^M(\Delta t )\|g_t^n\|_0^2\le C \sum_{n=1}^M (\Delta t)\|D g^n\|_0^2+C (\Delta t)^4\sum_{n=2}^M\int_{t_{n-2}}^{t_n} \left\|\frac{\partial^{3} g(t)}{\partial t^{3}}\right\|_0^2\ \di t
\end{eqnarray*}
which gives \eqref{deri_dif}.
\end{proof}

\begin{remark}
Since we can approximate a time derivative at $t_n$ with order $q\ge 2$ using a linear combination 
 of first order divided differences, the arguments of the proof of Lemma~\ref{le:deri_by_dif} can be extended to obtain
for $q\ge 2$ and functions $g$ with $\frac{\partial^{q+1}g}{\partial t^{q+1}}\in L^2(0,T;L^2)$
\begin{eqnarray}\label{deri_dif_p}
\sum_{n=0}^M(\Delta t )\|g_t^n\|_0^2\le C \sum_{n=1}^M (\Delta t)\|D g^n\|_0^2+C(\Delta t )^{2q} \int_{t_0}^T\left\|\frac{\partial^{q+1} g(t)}{\partial t^{q+1}}\right\|_0^2 \ \di t,
\end{eqnarray}
for a constant~$C$ depending only on~$q$.
\end{remark}

\begin{lema}\label{le:tra_cota_pro_t_b} For each $q\ge 2$ there exist a constant $C>0$ such that 
 for $1\le r\le d_r$ the following estimate holds:
 \begin{equation}\label{eq:tra:pro_t_b}
\int_{t_0}^{T} \|(I-P^r)u_{h,t}(t)\|_0^2\ \di t\le CC_p^2\left(\frac{T}{\tau^2}\sum_{k=r+1}^{d_r} \lambda_k+(\Delta t)^{2q}
\int_{t_0}^{T}\left\| \frac{\partial^{q+1} \nabla u_h(t)}{\partial t^{q+1}}\right\|_0^2 \ \di t\right),
\end{equation}
under the assumption that $u_h$ is smooth enough such that the last 
term in \eqref{eq:tra:pro_t_b} is well defined. 
\end{lema}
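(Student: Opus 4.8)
The plan is to follow the proof of Lemma~\ref{le:tra_cota_pro_b}, keeping track of the fact that here the POD space $\bU^r$ is generated by the first order divided differences $y_h^j=\tau D u_h^{\,j-1}$ and not by the temporal derivatives $u_{h,t}(t_n)$ that appear on the left-hand side. First I apply the generalized interpolation estimate \eqref{new_cota_int_orp} to $\varphi(t)=u_{h,t}(t)$ (this is legitimate since the smoothness assumption guarantees $\partial_t^{q+1}u_h\in L^2(t_0,T;H^1)$). Using $\partial_t^q\nabla u_{h,t}=\partial_t^{q+1}\nabla u_h$, this yields
\begin{align*}
\int_{t_0}^{T}\|(I-P^r)u_{h,t}(t)\|_0^2\,\di t &\le C\sum_{n=0}^{M}(\Delta t)\|(I-P^r)u_{h,t}(t_n)\|_0^2\\
&\quad+CC_p^2(\Delta t)^{2q}\int_{t_0}^{T}\left\|\frac{\partial^{q+1}\nabla u_h(t)}{\partial t^{q+1}}\right\|_0^2\,\di t .
\end{align*}
The second term on the right is already of the desired form, so it only remains to bound the sum of the projection errors at the snapshot times $t_n$.

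For this, the key point is that $P^r$ is a fixed linear operator, hence it commutes with $\partial_t$, so $(I-P^r)u_{h,t}(t_n)=g_t^n$ for $g(t):=(I-P^r)u_h(t)$. Applying the generalized divided-difference estimate \eqref{deri_dif_p} to this $g$ gives
\begin{align*}
\sum_{n=0}^{M}(\Delta t)\|(I-P^r)u_{h,t}(t_n)\|_0^2 &\le C\sum_{n=1}^{M}(\Delta t)\|(I-P^r)D u_h^n\|_0^2\\
&\quad+C(\Delta t)^{2q}\int_{t_0}^{T}\left\|\frac{\partial^{q+1}(I-P^r)u_h(t)}{\partial t^{q+1}}\right\|_0^2\,\di t .
\end{align*}
In the last integral I use Poincar\'e's inequality \eqref{poincare} together with the contraction property $\|\nabla(I-P^r)v\|_0\le\|\nabla v\|_0$ of the $H_0^1$-orthogonal projection to bound it by $CC_p^2(\Delta t)^{2q}\int_{t_0}^{T}\|\partial_t^{q+1}\nabla u_h(t)\|_0^2\,\di t$, which is again of the desired form.

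It remains to treat the divided-difference sum, and here the POD identity \eqref{cota_pod} enters. By the definition of the snapshots, $y_h^{\,n+1}=\tau D u_h^n$ for $n=1,\dots,M$, so, using Poincar\'e's inequality once more,
\begin{align*}
\sum_{n=1}^{M}(\Delta t)\|(I-P^r)D u_h^n\|_0^2 &=\frac{\Delta t}{\tau^2}\sum_{j=2}^{N}\|(I-P^r)y_h^j\|_0^2\\
&\le\frac{C_p^2\,\Delta t}{\tau^2}\sum_{j=1}^{N}\|\nabla(I-P^r)y_h^j\|_0^2=\frac{C_p^2\,N\Delta t}{\tau^2}\sum_{k=r+1}^{d_r}\lambda_k .
\end{align*}
Since $N=M+1$ and $\Delta t=T/M$ we have $N\Delta t\le 2T$, so this contribution is at most $2C_p^2 T\tau^{-2}\sum_{k=r+1}^{d_r}\lambda_k$. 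Collecting the three contributions gives \eqref{eq:tra:pro_t_b}. The step that requires the most care is precisely the passage from the temporal derivatives $u_{h,t}(t_n)$, which occur on the left-hand side, to the divided differences $y_h^j$, which span $\bU^r$ and hence are the only quantities directly controlled by \eqref{cota_pod}; the commutation of $(I-P^r)$ with $\partial_t$ together with Lemma~\ref{le:deri_by_dif} in its generalized form \eqref{deri_dif_p} makes this work, at the price of the additional $(\Delta t)^{2q}$ remainder. Everything else is bookkeeping of constants and indices, entirely parallel to Lemma~\ref{le:tra_cota_pro_b}.
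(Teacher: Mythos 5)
Your proposal is correct and follows essentially the same route as the paper: apply \eqref{new_cota_int_orp} to $\varphi=u_{h,t}$, convert the nodal derivative sum into divided differences via \eqref{deri_dif_p} applied to $g=(I-P^r)u_h$, control the remainder with Poincar\'e and the contraction property of $P^r$, and finish with \eqref{cota_pod} and $N\Delta t\le 2T$. The index bookkeeping ($y_h^{n+1}=\tau Du_h^n$) and the resulting constants match the paper's argument.
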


\begin{proof}
We apply \eqref{new_cota_int_orp}  to $\varphi(s)=u_{h,t}(s)$ so that
\begin{eqnarray}\label{mecu1}
\int_{t_0}^{T} \|(I-P^r)u_{h,t}(t)\|_0^2\ \di t&\le& C\sum_{n=0}^M (\Delta t)\|(I-P^r) u_{h,t}^n\|_0^2\nonumber\\
&&+CC_p^2 (\Delta t)^{2q}
\int_{t_0}^{T}\left\| \frac{\partial^{q+1} \nabla u_h(t)}{\partial t^{q+1}}\right\|_0^2 \ \di t.
\end{eqnarray}
To bound the first term on the right-hand-side, 
we apply \eqref{deri_dif_p} to $g=(I-P^r) u_{h}$, use Poincar\'e's inequality \eqref{poincare}, 
and the fact that for any function $v\in H_0^1(\Omega)$ it holds $\|\nabla(I-P^r)  v\|_0\le \|\nabla v\|_0$ to obtain
\begin{eqnarray*}
\lefteqn{
\sum_{n=0}^M (\Delta t)\|(I-P^r) u_{h,t}^n\|_0^2} \\
&\le &  C \sum_{n=1}^M (\Delta t)\|D((I-P^r)u_h^n)\|_0^2
 +C (\Delta t )^{2q} \int_{t_0}^T\left\|\frac{\partial^{q+1} (I-P^r)u_h(t)}{\partial t^{q+1}}\right\|_0^2 \ \di t\\
 &\le&  C C_p^2\sum_{n=1}^M (\Delta t)\|\nabla (I-P^r)Du_h^n\|_0^2
 +CC_p^2 (\Delta t )^{2q} \int_{t_0}^T\left\|\frac{\partial^{q+1} \nabla u_h(t)}{\partial t^{q+1}}\right\|_0^2 \ \di t.\nonumber
\end{eqnarray*}
Applying \eqref{cota_pod} to bound the first term on the right-hand side,
thereby taking the scaling of functions of $\mathbf U$ and $(M+1)/M\le 2$ into account,
yields
\begin{eqnarray}\label{mecu2}
 \sum_{n=0}^M (\Delta t)\|(I-P^r) u_{h,t}^n\|_0^2 \le C C_p^2\frac{T}{\tau^2} \sum_{k={r+1}}^{d_r} \lambda_k
 +CC_p^2 (\Delta t )^{2q} \int_{t_0}^T\left\|\frac{\partial^{q+1} \nabla u_h(t)}{\partial t^{q+1}}\right\|_0^2\ \di t.
\end{eqnarray}
From \eqref{mecu1} and \eqref{mecu2} we infer \eqref{eq:tra:pro_t_b}.
\end{proof}

\subsection{Time derivatives case}

The proof of the following  lemma is inspired by the arguments used for proving \cite[Lemma 3.1]{nos_der_pod}.

\begin{lema}\label{le:our_etal_tra}
There exist a positive constant~$C$ such that for~$X=L^2(\Omega)$ or~$X=H^1_0(\Omega)$, the following bound holds
for $z\in H^3(0,T;X)$ 
\begin{equation}\label{eq:zetast_tra}
\max_{0\le k\le {M} }\|z^k\|_X^2 \le  {C}\|\tilde z\|_X^2+ C\left(\frac{ T^2}{M}\sum_{n=0}^M \| z_t^n\|_X^2
+{T}(\Delta t)^4\int_{t_0}^{T}\left\|\frac{\partial z^3(t)}{\partial t}\right\|_X^2\ \di t\right),
\end{equation}
{where $\tilde z$ denotes either $\tilde z=z(t_0)$ or~$\tilde z=\overline z=\frac{1}{M+1}\sum_{j=0}^Mz^j$,  $z^n=z(t_n)$, $z_t^n=z_t(t_n)$}.
\end{lema}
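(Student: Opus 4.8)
The plan is to reduce the pointwise-in-time bound to a control of $\int_{t_0}^{T}\|z_t(s)\|_X\,\di s$ via the fundamental theorem of calculus, and then to estimate that integral by a composite quadrature along the lines of the proof of Lemma~\ref{le:bosco}. Since $z\in H^3(0,T;X)\subset C^2([0,T];X)$, the nodal values $z^n=z(t_n)$, $z_t^n=z_t(t_n)$ are well defined and $z^k-z^j=\int_{t_j}^{t_k}z_t(s)\,\di s$ for all $0\le j,k\le M$. If $\tilde z=z(t_0)$, then $z^k=\tilde z+\int_{t_0}^{t_k}z_t(s)\,\di s$, hence $\|z^k\|_X\le\|\tilde z\|_X+\int_{t_0}^{T}\|z_t(s)\|_X\,\di s$ for every $k$. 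If instead $\tilde z=\overline z=\frac{1}{M+1}\sum_{j=0}^M z^j$, then $z^k-\overline z=\frac{1}{M+1}\sum_{j=0}^M(z^k-z^j)$ and bounding each difference by $\|z^k-z^j\|_X\le\int_{t_0}^T\|z_t(s)\|_X\,\di s$ gives the same inequality. In either case, squaring and using $(a+b)^2\le 2a^2+2b^2$ together with the Cauchy--Schwarz inequality in time yields
\[
\max_{0\le k\le M}\|z^k\|_X^2\le 2\|\tilde z\|_X^2+2T\int_{t_0}^T\|z_t(s)\|_X^2\,\di s ,
\]
so it remains only to estimate $\int_{t_0}^T\|z_t(s)\|_X^2\,\di s$ in terms of the nodal sum $\sum_{n=0}^M\|z_t^n\|_X^2$.

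For this I would repeat the argument of Lemma~\ref{le:bosco} applied to $\varphi=z_t$, but with the projection $P^r$ omitted. On each subinterval $[t_n,t_{n+1}]$ write $z_t(s)=I_{2,s}z_t(s)+R_{2,s}z_t(s)$, where $I_{2,s}z_t$ is the linear-in-time interpolant of $z_t$ at $t_n,t_{n+1}$. Exactly as in \eqref{eq:bosco2} one gets $\int_{t_n}^{t_{n+1}}\|I_{2,s}z_t(s)\|_X^2\,\di s\le\frac23\Delta t\big(\|z_t^n\|_X^2+\|z_t^{n+1}\|_X^2\big)$, and as in \eqref{eq:bosco3}, using the $X$-valued version of the Lagrange interpolation bound \eqref{cota_inter} in time (for $X=L^2(\Omega)$ this follows by Fubini; for $X=H^1_0(\Omega)$ by applying the $L^2$ case to $\nabla z_t$, since interpolation in time commutes with $\nabla$), $\int_{t_n}^{t_{n+1}}\|R_{2,s}z_t(s)\|_X^2\,\di s\le c_{\rm int}^2(\Delta t)^4\int_{t_n}^{t_{n+1}}\big\|\tfrac{\partial^3 z(s)}{\partial s^3}\big\|_X^2\,\di s$. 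Summing over $n=0,\dots,M-1$, each interior node being counted twice, gives
\[
\int_{t_0}^T\|z_t(s)\|_X^2\,\di s\le\tfrac43\Delta t\sum_{n=0}^M\|z_t^n\|_X^2+2c_{\rm int}^2(\Delta t)^4\int_{t_0}^T\Big\|\tfrac{\partial^3 z(s)}{\partial s^3}\Big\|_X^2\,\di s .
\]

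Inserting this into the displayed bound for $\max_k\|z^k\|_X^2$ and using $T\Delta t=T\cdot T/M=T^2/M$ produces precisely \eqref{eq:zetast_tra} with $C=\max\{2,\tfrac83,4c_{\rm int}^2\}$ (the term $\tfrac{\partial z^3}{\partial t}$ in \eqref{eq:zetast_tra} being read as the third time derivative $\tfrac{\partial^3 z}{\partial t^3}$, which belongs to $L^2(0,T;X)$ by hypothesis). The argument is essentially routine once Lemma~\ref{le:bosco} is in place; the only points that need a little care are the uniform treatment of the two choices of $\tilde z$ and the justification that the time-interpolation error estimate \eqref{cota_inter} may be used with values in $X$, which is immediate for both $X=L^2(\Omega)$ and $X=H^1_0(\Omega)$.
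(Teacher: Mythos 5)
Your proof is correct and follows essentially the same route as the paper: both hinge on writing $z^k-\tilde z$ as an integral of $z_t$ (with the mean-value case reduced to the initial-value case in the same way) and on the piecewise-linear-in-time interpolant of $z_t$ together with its $H^3$ remainder estimate from \eqref{cota_inter}. The only difference is organizational --- the paper applies the trapezoidal decomposition directly to the vector-valued integral $\int_{t_0}^{t_k}z_t\,\di t$ and then uses discrete Cauchy--Schwarz, whereas you first pass to $T\int_{t_0}^{T}\|z_t(s)\|_X^2\,\di s$ and then bound that integral by nodal values as in Lemma~\ref{le:bosco}; note only that your coefficient $\tfrac{4}{3}$ should be $\tfrac{8}{3}$ once interior nodes are counted twice, which is harmless since $C$ is generic.
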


\begin{proof} We first consider the case where $\tilde z=z^0$.
For each $k$ we have
$$
z^k=z^0+\int_{t_0}^{t_k}z_t(t) \ \di t.
$$
With the notation of Lemma~\ref{le:bosco}, adding and subtracting the Lagrange linear interpolant in time based on nodes
$t_{n-1}$ and $t_n$ we get
\begin{eqnarray}\label{first_tra}
z^k&=&z^0+\sum_{n=1}^k \int_{t_{n-1}}^{t_n} I_{2,t}z_t(t)\ \di t+\sum_{n=1}^k\int_{t_{n-1}}^{t_n} R_{2,t} z_t(t) \ \di t\nonumber\\
&=&z^0+\sum_{n=1}^k \left(\frac{\Delta t}{2}z_t(t_{n-1})+\frac{\Delta t}{2}z_t(t_n)\right)+\sum_{n=1}^k\int_{t_{n-1}}^{t_n} R_{2,t} z_t(t)\ \di t.
\end{eqnarray}
We now observe that, for the last term on the right-hand side of \eqref{first_tra}, applying the Cauchy--Schwarz 
inequality and the general bound \eqref{cota_inter} for the
Lagrange interpolant, in time instead of in space, 
for $x\in \Omega$ we obtain
\begin{eqnarray*}
\biggl|\int_{t_{n-1}}^{t_n} R_{2,t} z_t(t,x) \ \di t\biggr| &\le& \left(\int_{t_{n-1}}^{t_n} |R_{2,t} z_t(t,x)|^2 \ \di t\right)^{1/2}(\Delta t)^{1/2}\\
&\le& c_\text{\rm int} (\Delta t)^{5/2}\left(\int_{t_{n-1}}^{t_n} \left| \frac{\partial^3 z(t,x)}{\partial t}\right|^2 \ \di t\right)^{1/2},
\end{eqnarray*}
and consequently
\[
\left\|\int_{t_{n-1}}^{t_n} R_{2,t} z_t(t) \ \di t\right\|_X\le c_{\rm int}
(\Delta t)^{5/2}\left(\int_{t_{n-1}}^{t_n}\left\|\frac{\partial^3 z(t)}{\partial t}\right\|_X^2\ \di t
\right)^{1/2}.
\]
Taking norms in \eqref{first_tra} and using the above inequality we find
\begin{eqnarray*}
\|z^k\|_X&\le& \|z^0\|_X+\sum_{n=0}^k\Delta t \|z_t^n\|_X
+c_{\rm int}(\Delta t)^{5/2}\sum_{n=1}^k
\left(\int_{t_{n-1}}^{t_n}\left\|\frac{\partial^3 z(t)}{\partial t}\right\|_X^2\ \di t\right)^{1/2}
 \\
&\le&\|z^0\|_X+T^{1/2}(\Delta t)^{1/2}\left(\sum_{n=0}^M\|z_t^n\|_X^2\right)^{1/2}+c_{\rm int}{\sqrt{t_k}}(\Delta t)^2\left(\int_{t_0}^{T}\left\|\frac{\partial^3 z(t)}{\partial t}\right\|_X^2\ \di t
\right)^{1/2},
\end{eqnarray*}
from which we reach \eqref{eq:zetast_tra}.
The case where $\tilde z=\overline z$ follows easily by noticing that
\[
\overline z=\frac{1}{M+1}\sum_{j=0}^M\left(z^0+\int_{t_0}^{t_j}z_t(t)\ \di t\right) = z^0 + \frac{1}{M+1}\sum_{j=1}^M\int_{t_0}^{t_j}z_t(t)\ \di t,
\]
see \cite[Lemma~3.2]{nos_der_pod}.
\end{proof}

\begin{remark}
Arguing as in the proof of Lemma \ref{le:our_etal_tra} but changing $I_{2,t}$ by $I_{q,t}$ where $I_{q,t}$ is a Lagrange interpolant in time
based on nodes $t_{m-1}$, $t_m$,\ldots $t_{m+q-1}$ taking care that nodes include $t_n$ and are always part of the set $\left\{t_0,t_1,\ldots,t_N\right\}$  we obtain the following result for $q\ge 2$
\begin{equation}\label{eq:zetast_tra_p}
\max_{0\le k\le {M} }\|z^k\|_X^2 \le  {C}\|\tilde z\|_X^2+ C\left(\frac{ T^2}{M}\sum_{n=0}^M \| z_t^n\|_X^2
+{T}(\Delta t)^{2q}\int_{t_0}^{T}\left\|\frac{\partial^{q+1}z(t)}{\partial t^{q+1}}\right\|_X^2\ \di t\right),
\end{equation}
where the constant~$C$ only depends on~$q$ and~$c_{\rm int}$.
\end{remark}

\begin{lema}\label{le:maxPr_ut} There is a constant $C>0$ such that for each $q\ge 2$ the estimates
\begin{eqnarray}\label{max_dif_time_p}
\max_{0\le n\le M}\|u_h^n-P^ru_h^n\|_0^2&\le&  C\left(1+ \frac{T^2}{\tau^2}\right) C_p^2\sum_{k={r+1}}^{d_r}\lambda_k
+C T C_p^2(\Delta t)^{2q}\int_{t_0}^T\left\|\frac{\partial^{q+1}\nabla u_h(t)}{\partial t^{q+1}}\right\|_0^2 \ \di t,\nonumber\\
\max_{0\le n\le M}\|\nabla (u_h^n-P^ru_h^n)\|_0^2 &\le& C\left(1+ \frac{T^2}{\tau^2}\right) \sum_{k={r+1}}^{d_r}\lambda_k
+C T (\Delta t)^{2q}\int_{t_0}^T\left\|\frac{\partial^{q+1}\nabla u_h(t)}{\partial t^{q+1}}\right\|_0^2\  \di t.
\end{eqnarray}
hold. As a consequence, since $\Delta t =T/M$ and  $(M+1)/M\le 2$
\begin{equation}\label{max_dif_promedio_time_p}
{\Delta t }\sum_{n=0}^M \|u_h^n-P^ru_h^n\|_0^2\le  CC_p^2 \left(T \left(1+  \frac{T^2}{\tau^2}\right)\sum_{k={r+1}}^m\lambda_k +T^2(\Delta t)^{2q}\int_{t_0}^T\left\|\frac{\partial^{q+1}\nabla u_h(t)}{\partial t^{q+1}}\right\|_0^2 \ \di t\right).
\end{equation}
\end{lema}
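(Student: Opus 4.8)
The plan is to follow the strategy of Lemma~\ref{le:maxPr_FD}, replacing the finite-difference reconstruction of $u_h^n$ by the time-derivative reconstruction encoded in Lemma~\ref{le:our_etal_tra}, in its general form~\eqref{eq:zetast_tra_p}. First I would apply~\eqref{eq:zetast_tra_p} to the function $z(t)=(I-P^r)u_h(t)$. This is admissible because $P^r$ is a fixed (time-independent) linear operator, so it commutes both with time differentiation and with the integration in time on which~\eqref{eq:zetast_tra_p} rests; moreover $z(t)\in H_0^1(\Omega)$, so one is free to take $X=L^2(\Omega)$ or $X=H_0^1(\Omega)$. With this choice $z^n=u_h^n-P^ru_h^n$, $z_t^n=(I-P^r)u_{h,t}^n$, $\partial_t^{q+1}z=(I-P^r)\partial_t^{q+1}u_h$, and $\tilde z=(I-P^r)w_0$ for either admissible choice of $w_0$.

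Next I would estimate the three terms on the right-hand side of~\eqref{eq:zetast_tra_p}, working first in the $\|\nabla\cdot\|_0$ norm (i.e.\ $X=H_0^1(\Omega)$). Since $y_h^1=\sqrt N\,w_0$, the first term is $\|\nabla(I-P^r)w_0\|_0^2=N^{-1}\|\nabla(y_h^1-P^ry_h^1)\|_0^2\le\sum_{k=r+1}^{d_r}\lambda_k$ by~\eqref{cota_pod_time_deri}, uniformly in the two choices of $w_0$. For the second term, the derivative snapshots are $y_h^{n+2}=\tau\,u_{h,t}(t_n)$, so $\sum_{n=0}^M\|\nabla(I-P^r)u_{h,t}^n\|_0^2=\tau^{-2}\sum_{n=0}^M\|\nabla(y_h^{n+2}-P^ry_h^{n+2})\|_0^2\le\tau^{-2}\sum_{j=1}^N\|\nabla(y_h^j-P^ry_h^j)\|_0^2=(N/\tau^2)\sum_{k=r+1}^{d_r}\lambda_k$, again by~\eqref{cota_pod_time_deri}; multiplying by $T^2/M$ and using $N/M=(M+2)/M\le3$ gives $C(T^2/\tau^2)\sum_{k=r+1}^{d_r}\lambda_k$. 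For the third term, using that $(I-P^r)$ commutes with $\partial_t^{q+1}$ and that $\|\nabla(I-P^r)v\|_0\le\|\nabla v\|_0$ for $v\in H_0^1(\Omega)$, one gets $\int_{t_0}^T\|\partial_t^{q+1}\nabla(I-P^r)u_h(t)\|_0^2\,\di t\le\int_{t_0}^T\|\partial_t^{q+1}\nabla u_h(t)\|_0^2\,\di t$. Adding the three bounds yields the second inequality in~\eqref{max_dif_time_p}. For the first (the $L^2$) inequality I would run the same computation with $X=L^2(\Omega)$ and, after applying~\eqref{eq:zetast_tra_p}, bound every $L^2$ norm of an $(I-P^r)(\cdot)$ term by $C_p$ times its $H_0^1$ seminorm via Poincar\'e's inequality~\eqref{poincare}; this reproduces the estimates just obtained multiplied by $C_p^2$. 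Finally, \eqref{max_dif_promedio_time_p} follows from the $L^2$ estimate in~\eqref{max_dif_time_p} via $\Delta t\sum_{n=0}^M\|u_h^n-P^ru_h^n\|_0^2\le\Delta t\,(M+1)\max_{0\le n\le M}\|u_h^n-P^ru_h^n\|_0^2=T\,\frac{M+1}{M}\max_{0\le n\le M}\|u_h^n-P^ru_h^n\|_0^2\le 2T\max_{0\le n\le M}\|u_h^n-P^ru_h^n\|_0^2$.

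The only mildly delicate point — and the one I would double-check — is the bookkeeping of the scaling factors $\tau$ and $\sqrt N$ in the definition of the snapshots, in particular the cancellation of the $1/M$ in $T^2/M$ against the factor $N=M+2$ coming from the $N^{-1}$ in the correlation matrix. It is precisely this cancellation that turns the a priori $M$-dependent contribution of the derivative snapshots into the $M$-independent factor $T^2/\tau^2$ in the final estimates; everything else is a routine combination of~\eqref{eq:zetast_tra_p}, \eqref{cota_pod_time_deri}, the projection bound $\|\nabla(I-P^r)v\|_0\le\|\nabla v\|_0$, and Poincar\'e's inequality~\eqref{poincare}.
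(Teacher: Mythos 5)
Your proposal is correct and follows essentially the same route as the paper: apply \eqref{eq:zetast_tra_p} to $z=(I-P^r)u_h$ with $X=L^2(\Omega)$ or $X=H_0^1(\Omega)$, control the snapshot terms via \eqref{cota_pod_time_deri} (tracking the $\sqrt{N}$ and $\tau$ scalings exactly as you do), bound the remainder term using the $H_0^1$-stability of $I-P^r$, and invoke Poincar\'e's inequality for the $L^2$ case. The paper's proof is just a terser version of the same argument, so no discrepancies to report.
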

\begin{proof}
The proof is achieved immediately by applying \eqref{eq:zetast_tra_p} to $z=(I-P^r)u_h$ with $X=L^2(\Omega)$ or $X=H_0^1(\Omega)$ together with Poincar\'e inequality \eqref{poincare} and \eqref{cota_pod_time_deri} and taking into account that
\begin{eqnarray*}
\int_{t_0}^{T}\left\|\frac{\partial^{q+1} (I-P^r) u_h(t)}{\partial t^{q+1}}\right\|_0^2 \ \di t&\le& C_p^2 \int_{t_0}^{T}\left\|\frac{\partial^{q+1} \nabla(I-P^r) u_h(t)}{\partial t^{q+1}}\right\|_0^2 \ \di t\\
&\le& C_p^2\int_{t_0}^{T}\left\|\frac{\partial^{q+1} \nabla u_h(t)}{\partial t^{q+1}}\right\|_0^2 \ \di t.
\end{eqnarray*}
\end{proof}

\begin{lema}\label{le:tra_cota_pro_time} There is a constant~$C>0$ such that 
for each $q\ge 2$ and for
 $1\le r\le d_r$ the following estimate holds
 \begin{eqnarray}\label{eq:tra:pro_time_p}
\int_{t_0}^{T} \|(I-P^r)u_h(t)\|_0^2\ \di t&\le&  CC_p^2\Biggl(T\left(1+\frac{T^2}{\tau^2}\right)\sum_{k={r+1}}^{d_r}\lambda_k
+{T^2}(\Delta t)^{2q}\int_{t_0}^T\left\|\frac{\partial^{q+1}\nabla u_h(t)}{\partial t^{q+1}}\right\|_0^2 \ \di t
\nonumber\\
&&+ (\Delta t)^{2q}
\int_{t_0}^{T}\left\| \frac{\partial^{q} \nabla u_h(t)}{\partial t^{q}}\right\|_0^2 \ \di t\Biggr).
\end{eqnarray}
\end{lema}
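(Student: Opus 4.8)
The plan is to decompose the error $(I-P^r)u_h(t)$ into a temporal-interpolation piece and a nodal-error piece, exactly as in Lemma~\ref{le:bosco}, and then to control the nodal errors using the time-derivative-based maximal bound already established in Lemma~\ref{le:maxPr_ut}. The key point distinguishing this lemma from Lemma~\ref{le:tra_cota_pro_b} is that here the POD space is built from time derivatives, so the nodal quantity $\|(I-P^r)u_h^n\|_0$ is no longer directly tied to the POD tail $\sum_{k>r}\lambda_k$ via \eqref{cota_pod}; instead it must be routed through \eqref{eq:zetast_tra_p}, which introduces the extra $(\Delta t)^{2q}$-term with $\partial^{q+1}\nabla u_h/\partial t^{q+1}$.

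First I would apply the generalized interpolation estimate \eqref{new_cota_int_orp} (Remark~\ref{rem:rem1}) to $\varphi(t)=u_h(t)$, which gives
\begin{equation*}
\int_{t_0}^T \|(I-P^r)u_h(t)\|_0^2\ \di t\le C\sum_{n=0}^M (\Delta t)\|(I-P^r)u_h^n\|_0^2+CC_p^2 (\Delta t)^{2q}\int_{t_0}^{T}\left\|\frac{\partial^{q}\nabla u_h(t)}{\partial t^{q}}\right\|_0^2\ \di t.
\end{equation*}
The second term here is already one of the terms appearing on the right-hand side of \eqref{eq:tra:pro_time_p}, so it needs no further work. For the first (nodal) term, I would bound $\sum_{n=0}^M(\Delta t)\|(I-P^r)u_h^n\|_0^2\le T\max_{0\le n\le M}\|(I-P^r)u_h^n\|_0^2$ and then invoke the first inequality of \eqref{max_dif_time_p} from Lemma~\ref{le:maxPr_ut}. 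Alternatively, and slightly more sharply, one can use the averaged bound \eqref{max_dif_promedio_time_p} directly, which already has the factor $\Delta t\sum_{n=0}^M$ built in; this produces precisely the terms $CC_p^2 T(1+T^2/\tau^2)\sum_{k>r}\lambda_k$ and $CC_p^2 T^2(\Delta t)^{2q}\int_{t_0}^T\|\partial^{q+1}\nabla u_h/\partial t^{q+1}\|_0^2\,\di t$.

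Collecting the three contributions — the POD-tail term and the $(\Delta t)^{2q}$-term with the $(q{+}1)$st derivative, both coming from the nodal estimate via Lemma~\ref{le:maxPr_ut}, plus the $(\Delta t)^{2q}$-term with the $q$th derivative coming from the temporal interpolation error in \eqref{new_cota_int_orp} — and absorbing all numerical constants into a single $C$ depending only on $q$ and $c_{\rm int}$, yields \eqref{eq:tra:pro_time_p}. I do not anticipate a genuine obstacle here: the lemma is essentially a bookkeeping combination of \eqref{new_cota_int_orp} with Lemma~\ref{le:maxPr_ut}, the analogue of how Lemma~\ref{le:tra_cota_pro_b} combined \eqref{new_cota_int_orp} with \eqref{max_dif_promedio}. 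The only mild subtlety is making sure the $\tau$-scaling of the snapshot $\sqrt{N}w_0$ and the division by $\Delta t=T/M$ are tracked consistently so that the powers of $T/\tau$ and the power $(\Delta t)^{2q}$ come out as stated; this is handled entirely by quoting Lemma~\ref{le:maxPr_ut} rather than redoing it.
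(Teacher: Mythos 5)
Your proposal is correct and follows exactly the paper's own argument: apply \eqref{new_cota_int_orp} with $\varphi=u_h$ and then bound the nodal sum $\sum_{n=0}^M(\Delta t)\|(I-P^r)u_h^n\|_0^2$ via \eqref{max_dif_promedio_time_p} from Lemma~\ref{le:maxPr_ut}. The bookkeeping of the three resulting terms matches the statement, so nothing further is needed.
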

\begin{proof}
We apply \eqref{new_cota_int_orp} with $\varphi(s)=u_h(s)$ and then use \eqref{max_dif_promedio_time_p} for bounding $\sum_{n=0}^M (\Delta t)\|(I-P^r) u_h(t_n)\|_0$.
\end{proof}

\begin{lema}\label{le:tra_cota_pro_t_time}
For each $q\ge 2$, there is a constant~$C>0$ such that for
$1\le r\le d_r$ the following estimate holds

 \begin{equation}\label{eq:tra:pro_t_time}
\int_{t_0}^{T} \|(I-P^r)u_{h,t}(t)\|_0^2\ dt \le  CC_p^2\left(\frac{T}{\tau^2}\sum_{k={r+1}}^{d_r}\lambda_k
+(\Delta t)^{2q}\int_{t_0}^T\left\|\frac{\partial^{q+1}\nabla u_h(t)}{\partial t^{q+1}}\right\|_0^2 dt\right).
\end{equation}
\end{lema}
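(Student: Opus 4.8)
The plan is to follow the two-step pattern of the proof of Lemma~\ref{le:tra_cota_pro_t_b}, exploiting that in the time-derivatives setting the snapshots \emph{are} (scaled) temporal derivatives of $u_h$. This makes the argument shorter than in the finite-difference case: the passage through first-order divided differences (Lemma~\ref{le:deri_by_dif} and \eqref{deri_dif_p}) is no longer needed, because the POD error identity \eqref{cota_pod_time_deri} applies directly to the nodal values $u_{h,t}(t_n)$. In fact the statement to be proved coincides with that of Lemma~\ref{le:tra_cota_pro_t_b}; only the underlying set of snapshots is different.

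First I would apply the generalized interpolation estimate \eqref{new_cota_int_orp} (see Remark~\ref{rem:rem1}) with $\varphi(t)=u_{h,t}(t)$. This is admissible because the regularity assumption on $u_h$ made at the end of the preliminaries guarantees $\partial_t^{q}\nabla\varphi=\partial_t^{q+1}\nabla u_h\in L^2(t_0,T;L^2)$. The estimate then reduces the left-hand side to controlling the nodal sum $\sum_{n=0}^M(\Delta t)\,\|(I-P^r)u_{h,t}(t_n)\|_0^2$, since the remaining term is already the desired $CC_p^2(\Delta t)^{2q}\int_{t_0}^{T}\|\partial_t^{q+1}\nabla u_h(t)\|_0^2\,\di t$.

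For the nodal sum the plan is to use Poincar\'e's inequality \eqref{poincare} together with the contraction property $\|\nabla(I-P^r)v\|_0\le\|\nabla v\|_0$ of the $H_0^1$-orthogonal projection to bound $\|(I-P^r)u_{h,t}(t_n)\|_0^2$ by $C_p^2\|\nabla(u_{h,t}(t_n)-P^ru_{h,t}(t_n))\|_0^2$, and then to observe that, up to the scaling factor $\tau$, the functions $u_{h,t}(t_0),\dots,u_{h,t}(t_M)$ are precisely the generators $y_h^2,\dots,y_h^N$ of $\mathbf U$ (with $N=M+2$). Discarding the nonnegative $j=1$ term in \eqref{cota_pod_time_deri} then gives
\begin{equation*}
\sum_{n=0}^M\|\nabla(u_{h,t}(t_n)-P^ru_{h,t}(t_n))\|_0^2
\le\frac{1}{\tau^2}\sum_{j=1}^N\|\nabla(y_h^j-P^ry_h^j)\|_0^2
=\frac{N}{\tau^2}\sum_{k=r+1}^{d_r}\lambda_k .
\end{equation*}
Multiplying by $\Delta t=T/M$ and using $N/M=(M+2)/M\le 3$ bounds the nodal sum by $CC_p^2\,(T/\tau^2)\sum_{k=r+1}^{d_r}\lambda_k$, and inserting this into the inequality of the first step yields \eqref{eq:tra:pro_t_time}.

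I do not anticipate any genuine obstacle here; all the analytical work is already contained in the preliminary lemmas, and what remains is bookkeeping: applying \eqref{new_cota_int_orp} to $u_{h,t}$ rather than to $u_h$ (which is why the $(q{+}1)$-st temporal derivative of $u_h$ appears), keeping track of the $\tau$-scaling so that the $T/\tau^2$ factor comes out, and noting that the initial-value generator $y_h^1$ can be dropped by nonnegativity. The only point that might deserve an explicit sentence is that the smoothness hypotheses of Remark~\ref{rem:rem1} are met here thanks to the blanket temporal-regularity assumption on $u_h$.
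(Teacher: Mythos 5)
Your proposal is correct and follows exactly the paper's route: apply \eqref{new_cota_int_orp} with $\varphi=u_{h,t}$ and then bound the nodal sum $\sum_{n=0}^M(\Delta t)\|(I-P^r)u_{h,t}(t_n)\|_0^2$ directly via Poincar\'e, the contraction property of $P^r$, and the POD identity \eqref{cota_pod_time_deri}, with the $\tau$-scaling producing the $T/\tau^2$ factor. The paper's proof is a one-line version of the same argument; your bookkeeping (dropping the $y_h^1$ term, using $(M+2)/M\le 3$) fills in precisely the details it omits.
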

\begin{proof} 
The statement is obtained by applying \eqref{new_cota_int_orp} with $\varphi(s)=u_{h,t}(s)$ and then using  \eqref{cota_pod_time_deri} for bounding $\sum_{n=0}^M (\Delta t)\|(I-P^r) u_{h,t}(t_n)\|_0$.
\end{proof}

\section{Error analysis of the method}\label{section5}

We will consider the following semi-discrete POD-ROM approximation to approach \eqref{eq:model}: Find $u_r:(0,T]\rightarrow \bU^r$
such that
\begin{equation}\label{eq:pod}
(u_{r,t},v_r)+\nu(\nabla u_r,\nabla v_r)+(g(u_r),v_r)=(f,v_r),\quad \forall\ v_r\in \bU^r,
\end{equation}
with $u_r(0)=u_r^0\in \bU^r$ and $u_r^0\approx u^0$.

For the error analysis of this section, we will first assume that the nonlinear function
$g : \mathbb R \to \mathbb R$ is Lipschitz continuous (globally), with $g(u_r)$ being a short form  
notation of $g(u_r(t,x))$, $u_r(t,x)\in 
\mathbb R$, $t\in [0,T],\  x\in \Omega$. Then, we will extend the error analysis to a more general function.
Since we are dealing with a nonlinear equation, we need to prove some a priori bounds for the projection  of the Galerkin approximation over the reduced order space for performing the error analysis. 

\begin{lema} \label{lema12}
Assume $\frac{\partial^{2} u_h}{\partial t^2}\in H^1(0,T;H^1)$, $\Delta t \le C h^{1/3}$  and $h^{-1/2}\left(\sum_{k={r+1}}^{d_r}{\lambda_{k}}\right)^{1/2}\le C$. 
Then, the following bound holds
\begin{equation}\label{eq:cota_Pr}
\|P^r u_h(t)\|_\infty\le c_{\rm p,inf},\quad t\in [0,T].
\end{equation}
\end{lema}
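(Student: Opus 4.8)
The plan is to write $P^r u_h(t)=u_h(t)-(I-P^r)u_h(t)$ and bound the second summand. Since $u_h(t)\in V_h^l$ and $\bU^r\subset V_h^l$, also $(I-P^r)u_h(t)\in V_h^l$, so the inverse inequality \eqref{inv_b} applies; together with the bound \eqref{uhuht_inf} for $\|u_h(t)\|_\infty$ this gives
\[
\|P^r u_h(t)\|_\infty\le c_{\mathrm{inf}}+Ch^{-1/2}\|\nabla(I-P^r)u_h(t)\|_0,
\]
so it suffices to show $h^{-1/2}\|\nabla(I-P^r)u_h(t)\|_0\le C$ uniformly in $t\in[0,T]$. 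Pointwise-in-snapshot-time bounds for $\|\nabla(u_h^n-P^r u_h^n)\|_0$ are already available — \eqref{max_dif} in the finite-difference case and \eqref{max_dif_time_p} in the time-derivative case — so the only new ingredient is to pass from the nodes $t_n$ to an arbitrary $t$.

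For $t\in[t_n,t_{n+1}]$ I would split, as in the proof of Lemma~\ref{le:bosco}, $u_h(t)=I_{2,t}u_h(t)+R_{2,t}u_h(t)$ with $I_{2,t}u_h$ the linear-in-time interpolant on $t_n,t_{n+1}$. By linearity of $P^r$, $(I-P^r)I_{2,t}u_h(t)$ is the same convex combination of $(I-P^r)u_h(t_n)$ and $(I-P^r)u_h(t_{n+1})$, so $\|\nabla(I-P^r)I_{2,t}u_h(t)\|_0\le\max_{0\le n\le M}\|\nabla(I-P^r)u_h(t_n)\|_0$. For the remainder, using $\|\nabla(I-P^r)v\|_0\le\|\nabla v\|_0$, the commutation of $\nabla$ with interpolation in time, and a standard pointwise-in-time interpolation error estimate (via the Peano kernel representation of the remainder, as in the proof of Lemma~\ref{le:our_etal_tra}), one gets for each fixed $t\in[t_n,t_{n+1}]$
\[
\|\nabla(I-P^r)R_{2,t}u_h(t)\|_0^2\le c_{\mathrm{int}}^2(\Delta t)^{3}\int_{t_0}^{T}\left\|\frac{\partial^2\nabla u_h(s)}{\partial s^2}\right\|_0^2\ \di s,
\]
whose right-hand side is finite by the hypothesis $\partial^2 u_h/\partial t^2\in H^1(0,T;H^1)$. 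Altogether,
\[
h^{-1/2}\|\nabla(I-P^r)u_h(t)\|_0\le h^{-1/2}\max_{0\le n\le M}\|\nabla(I-P^r)u_h(t_n)\|_0+c_{\mathrm{int}}\,h^{-1/2}(\Delta t)^{3/2}\left(\int_{t_0}^{T}\left\|\frac{\partial^2\nabla u_h(s)}{\partial s^2}\right\|_0^2\ \di s\right)^{1/2}.
\]

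It remains to bound the two terms. By \eqref{max_dif} (resp.\ \eqref{max_dif_time_p}), $\max_{0\le n\le M}\|\nabla(I-P^r)u_h(t_n)\|_0\le C\left(\sum_{k=r+1}^{d_r}\lambda_k\right)^{1/2}$, plus — in the time-derivative case — an extra term of size $C(\Delta t)^q\left(\int_{t_0}^T\|\partial^{q+1}\nabla u_h(s)/\partial s^{q+1}\|_0^2\ \di s\right)^{1/2}$ for some fixed $q\ge 2$; hence $h^{-1/2}$ times this quantity is bounded, using the assumption $h^{-1/2}\left(\sum_{k=r+1}^{d_r}\lambda_k\right)^{1/2}\le C$ together with $h^{-1/2}(\Delta t)^q\le Ch^{q/3-1/2}\le C$ for $q\ge2$ (the remaining integral being finite by the regularity hypothesis). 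For the second term, $h^{-1/2}(\Delta t)^{3/2}\le C$ is precisely the hypothesis $\Delta t\le Ch^{1/3}$. This yields $h^{-1/2}\|\nabla(I-P^r)u_h(t)\|_0\le C$ for all $t\in[0,T]$, hence \eqref{eq:cota_Pr}.

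The main obstacle — and the reason the hypothesis $\Delta t\le Ch^{1/3}$ is present — is the need to balance the inverse-inequality factor $h^{-1/2}$ against the temporal consistency error: the crude estimate $\|u_h(t)-u_h(t_n)\|_1\le\Delta t\sup_s\|u_{h,t}(s)\|_1$ would leave $h^{-1/2}\Delta t\sim h^{-1/6}$, which diverges, so one genuinely needs the extra half power of $\Delta t$ obtained from (at least) linear interpolation in time. One must also keep track of the (routine) fact that every function to which \eqref{inv_b}, \eqref{max_dif}, \eqref{max_dif_time_p} are applied indeed lies in $V_h^l$, which holds because $\bU^r\subset V_h^l$.
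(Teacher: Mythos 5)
Your proposal is correct and follows essentially the same route as the paper: bound $\|P^r u_h(t_n)\|_\infty$ at the nodes via the inverse inequality \eqref{inv_b}, \eqref{uhuht_inf} and \eqref{max_dif} (resp.\ \eqref{max_dif_time_p} with $q=2$) together with the eigenvalue-tail assumption, then extend to all $t$ by linear-in-time interpolation, where the remainder contributes $h^{-1/2}(\Delta t)^{3/2}$, controlled by $\Delta t\le Ch^{1/3}$. The only (immaterial) differences are that you interpolate $u_h$ rather than $P^r u_h$ and stay in the $H^1$ norm until a single final application of \eqref{inv_b}, and you obtain the pointwise-in-time remainder bound from a Peano-kernel representation where the paper uses Agmon's inequality \eqref{agmon} in the time variable.
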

\begin{proof}
We start proving bound \eqref{eq:cota_Pr} at the discrete times where 
we argue similarly as in \cite[(39)]{novo_rubino}.
For any $j=0,\ldots,{M}$, applying  \eqref{uhuht_inf}, \eqref{inv_b}, we obtain
\begin{equation}\label{como}
\|P^r u_h(t_j)\|_\infty\le \|u_h(t_j)\|_\infty+\|(I-P^r)u_h(t_j)\|_\infty\le c_{\inf}+C h^{-1/2}\|\nabla 
((I-P^r)u_h)(t_j)\|_0.
\end{equation}
To bound the last term on the right-hand side above, in the finite difference case, we apply \eqref{max_dif} and then the assumption on the tail of the eigenvalues gives
\begin{eqnarray*}
\|P^r u_h(t_j)\|_\infty \le c_{\inf}+C h^{-1/2}\left(\sum_{k={r+1}}^{d_r}\lambda_k\right)^{1/2}\le C.
\end{eqnarray*}
In the time derivative case, instead of \eqref{max_dif} we use \eqref{max_dif_time_p} with 
$q=2$, so that, using in addition the assumption on the length of the time step, we obtain
\begin{eqnarray*}
\|P^r u_h(t_j)\|_\infty &\le& c_{\inf}+Ch^{-1/2}\left(\sum_{k={r+1}}^{d_r}\lambda_k\right)^{1/2}\nonumber\\
&&+Ch^{-1/2}(\Delta t)^{{4}}\left(\int_0^T\left\|\frac{\partial^{3}\nabla u_h(t)}{\partial t^{3}}\right\|_0^2 \ \di t\right)^{1/2}\le C.
\end{eqnarray*}

Now, we want to extend the bound to any $t\in[0,T]$. To this end, 
we observe that for any $t\in (t_j, t_{j+1})$ we can decompose 
$$
  P^r u_h(t)=I_{2,t}(t)+R_{2,t}(t),
$$
where $I_{2,t}(t)$ is the linear  interpolant based on~$P_ru_h(t_{j})$ and~$P_ru_h(t_{j+1})$, and $R_{2,t}(t)=P_ru_h(t)-I_{2,t}(t)$.
Using the results from the first step of the proof gives 
\begin{eqnarray}
\label{12b1}
\|P^r u_h(t)\|_\infty &\le & \max\left\{ \|P^r u_h(t_j)\|_\infty,  \|P^r u_h(t_{j+1})\|_\infty\right\} + \left\|  R_{2,t}(t)\right\|_\infty
\nonumber \\
&\le& C+ \left\|  R_{2,t}(t)\right\|_\infty.
\end{eqnarray}
Now, taking into account that $R_{2,t}(t_j,\cdot)=R_{2,t}(t_{j+1},\cdot)=0$, applying \eqref{agmon}, and then the general bound \eqref{cota_inter} for the
Lagrange interpolant (in time instead of in space) yields
\begin{eqnarray*}
\left|R_{2,t}(t,\cdot) \right| &\le&  \left(\int_{t_j}^{t_{j+1}} \left|R_{2,t}(t,\cdot)\right|^2\ \di t\right)^{1/4}  \left(\int_{t_j}^{t_{j+1}} \left|\frac{\partial R_{2,t}(t,\cdot)}{\partial t}\right|^2\ \di t\right)^{1/4} 
\nonumber\\
& \le& 
 c_{\rm int} (\Delta t)^{3/2}\left(\int_{t_j}^{t_{j+1}} \left|\frac{\partial^2 P^r u_{h}(t,\cdot)}{\partial t^2}\right|^2\ \di t\right)^{1/2}.
\end{eqnarray*}
Applying the inverse inequality \eqref{inv_b} and the $H_0^1$ stability of the projection $P^r$ leads to 
\begin{eqnarray}
\label{12b2}
 \left\|  R_2(t,\cdot )\right\|_\infty &\le &  c_{\rm int} (\Delta t)^{3/2}\left(\int_{t_j}^{t_{j+1}}  \left\| 
 \frac{\partial^2 P^r u_{h}(t,\cdot)}{\partial t^2}\right\|_\infty^2\ \di t\right)^{1/2}
 \nonumber\\
&\le&  c_{\rm int} (\Delta t)^{3/2}Ch^{-1/2} 
\left( \int_{t_j}^{t_{j+1}}  \left\|  \frac{\partial^2 \nabla P^r u_{h}(t,\cdot)}{\partial t^2}\right\|_0^2\ \di t\right)^{1/2}
 \nonumber\\
&\le&  c_{\rm int} (\Delta t)^{3/2}Ch^{-1/2} 
 \left(\int_{t_j}^{t_{j+1}}  \left\| \frac{\partial^2 \nabla u_{h}(t)}{\partial t^2}\right\|_0^2\ \di t\right)^{1/2}.
 \end{eqnarray}
Combining \eqref{12b1} and~\eqref{12b2} with the assumptions of the lemma finishes the proof. 
\end{proof}

\begin{Theorem}\label{th1} Assume $g$ is Lipschitz continuous with Lipschitz constant~$L > 0$,
i.e., assume that $
|g(s) - g(t)| \le L |s-t|$. 
Let $u_r$ be the POD-ROM approximation solving \eqref{eq:pod} and let $P^r u_h$ be the $H_0^1$-orthogonal projection 
onto $\bU^r$ of the semi-discrete Galerkin approximation $u_h$ defined in \eqref{gal_semi}.
Then, for the constant $K$ given by~\eqref{constant_K} below, the following bound holds for all~$t\in [0,T]$
\begin{eqnarray}\label{er_pro_ur}
\lefteqn{\|u_r(t)-P^r u_h(t)\|_0^2+2\nu\int_0^t \|\nabla (u_r(s)-P^r u_h(s))\|_0^2 \ \di s}\nonumber\\
 &\le& e^{Kt}\|u_r(0)-P^r u_h(0)\|_0\\
 &&+ e^{KT}T\left(\int_0^t \|(I-P^r)u_{h,s}(s)\|_0^2\ \di s+L^2\int_0^t\|(I-P^r)u_h(s)\|_0^2 \ \di s\right).\nonumber
\end{eqnarray}
\end{Theorem}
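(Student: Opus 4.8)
The plan is to carry out a standard energy estimate on the error $e_r := u_r - P^r u_h$, using the $H_0^1$-orthogonality of the projection $P^r$ to cancel the viscous cross term, and then to close the estimate with Gronwall's lemma. Throughout, write $\eta := (I-P^r)u_h$; since $P^r$ is time-independent, $\eta_t = (I-P^r)u_{h,t}$, and $u_r - u_h = e_r - \eta$.

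First I would derive the error equation. Every snapshot $y_h^j$, and hence every element of $\bU^r$, lies in $V_h^l$, so the Galerkin formulation \eqref{gal_semi} may be tested with $v_h = v_r \in \bU^r$. Subtracting the resulting identity from \eqref{eq:pod} and inserting $u_r - u_h = e_r - \eta$ gives, for all $v_r \in \bU^r$,
\[
(e_{r,t},v_r) + \nu(\nabla e_r,\nabla v_r) + (g(u_r)-g(u_h),v_r) = (\eta_t,v_r) + \nu(\nabla \eta,\nabla v_r).
\]
The crucial observation is that $(\nabla\eta,\nabla v_r) = 0$ by the very definition of $P^r$ as the $H_0^1$-orthogonal projection onto $\bU^r$; this is what makes the $H_0^1$-projection the convenient choice here, as it removes any need for an inverse estimate in the viscous term. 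Taking $v_r = e_r \in \bU^r$ then yields
\[
\tfrac12\tfrac{\di}{\di t}\|e_r\|_0^2 + \nu\|\nabla e_r\|_0^2 = (\eta_t,e_r) - (g(u_r)-g(u_h),e_r).
\]
I would bound the first right-hand term by $\|\eta_t\|_0\|e_r\|_0$ and, using the global Lipschitz hypothesis $|g(u_r)-g(u_h)| \le L|u_r - u_h| = L|e_r - \eta|$, bound the nonlinear term by $L\|e_r\|_0^2 + L\|\eta\|_0\|e_r\|_0$; multiplying by $2$ gives
\[
\tfrac{\di}{\di t}\|e_r\|_0^2 + 2\nu\|\nabla e_r\|_0^2 \le 2\bigl(\|\eta_t\|_0 + L\|\eta\|_0\bigr)\|e_r\|_0 + 2L\|e_r\|_0^2.
\]

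Finally I would integrate from $0$ to $t$ and invoke Gronwall. Applying the Cauchy--Schwarz inequality in time to $\int_0^t(\|\eta_s\|_0 + L\|\eta\|_0)\|e_r\|_0\,\di s$ — which produces a factor $\sqrt{t}\le\sqrt{T}$ — followed by Young's inequality and $(a+b)^2\le 2a^2+2b^2$, the product term is absorbed into $\max_{[0,t]}\|e_r\|_0^2$ at the cost of a contribution $C\,T\bigl(\int_0^t\|\eta_s\|_0^2\,\di s + L^2\int_0^t\|\eta\|_0^2\,\di s\bigr)$, while the remaining $\|e_r\|_0^2$ terms are dealt with by Gronwall's lemma and generate the exponential factors $e^{Kt}$ on the initial error and $e^{KT}$ on the projection errors, with $K$ the constant recorded in \eqref{constant_K}. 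Substituting back $\eta = (I-P^r)u_h$ and $\eta_t = (I-P^r)u_{h,t}$ gives \eqref{er_pro_ur}. The argument is essentially routine; the only substantive ingredient is the cancellation $(\nabla(I-P^r)u_h,\nabla v_r)=0$, and the only mildly delicate point is organizing the Gronwall step (taking the maximum of $\|e_r\|_0^2$ over $[0,t]$ before absorbing) so that the initial-error term retains the sharper factor $e^{Kt}$ rather than $e^{KT}$.
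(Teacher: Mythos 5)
Your proposal is correct in substance and follows essentially the same route as the paper: test the Galerkin and POD-ROM equations with $v_r=e_r=u_r-P^ru_h\in \bU^r$, use the $H_0^1$-orthogonality of $P^r$ to kill the viscous consistency term, bound the nonlinearity via $|g(u_r)-g(u_h)|\le L|e_r-\eta|$ (the paper splits through $g(P^ru_h)$ but arrives at the identical bound $L\|e_r\|_0^2+L\|\eta\|_0\|e_r\|_0$), and close with Gronwall. The one place you diverge is the final bookkeeping. The paper applies Young's inequality \emph{pointwise in time} with weight $T$, i.e.\ $ab\le \tfrac{T}{2}a^2+\tfrac{1}{2T}b^2$, which immediately yields the differential inequality $\tfrac{\di}{\di t}\|e_r\|_0^2+2\nu\|\nabla e_r\|_0^2\le K\|e_r\|_0^2+T\|\eta_t\|_0^2+TL^2\|\eta\|_0^2$ with $K=2/T+2L$, and then integrates and applies Gronwall; this produces the exact factors $e^{Kt}$ and $e^{KT}T$ appearing in \eqref{er_pro_ur} with no slack. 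Your variant (integrate first, Cauchy--Schwarz in time, absorb into $\max_{[0,t]}\|e_r\|_0^2$) is workable but strictly delivers the estimate only up to a generic constant: the Cauchy--Schwarz/Young/$(a+b)^2\le 2a^2+2b^2$ chain produces roughly $4T$ rather than $T$ in front of the projection-error integrals, plus a factor $2$ from undoing the absorption of $\tfrac12\max\|e_r\|_0^2$, and it requires the extra care you mention about taking the supremum of the left-hand side before absorbing. So if you want the theorem with the precise constants as stated, switch to the paper's pointwise Young step; it is both sharper and simpler, since no maximum-absorption argument is needed.
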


\begin{proof}
A straightforward calculation, using the projection property, shows 
that the projection of $u_h$ satisfies for all $v_r\in \bU^r$
\begin{eqnarray*}
\lefteqn{(P^ru_{h,t},v_r)+\nu(\nabla P^ru_h,\nabla v_r)+(g(P^r u_h),v_r)}\\
&=&(f,v_r)+\left((P^r-I) u_{h,t},v_r\right) +\left(g(P^r(u_h))-g(u_h),v_r\right).
 \end{eqnarray*}
 Denoting the error by
 $
 e_r=u_r-P^r u_h \in \bU^r
 $
 and subtracting this identity from \eqref{eq:pod} gives for all $v_r\in \bU^r$
 \begin{equation}\label{eq:laer}
 (e_{r,t},v_r)+\nu(\nabla e_r,\nabla v_r) =((I-P^r)u_{h,t},v_r)+(g(P ^r u_h)-g(u_r),v_r)
 +(g(u_h)-g(P^r u_h),v_r).
 \end{equation}

 Taking $v_r=e_r$ in \eqref{eq:laer}, applying the Cauchy--Schwarz and Young inequalities together with
 the Lipschitz continuity of~$g$,
  we get
 \begin{eqnarray}\label{eq:th_er1}
\lefteqn{ \frac{1}{2}\frac{d}{dt}\|e_r\|_0^2+\nu\|\nabla e_r\|_0^2 }\nonumber \\
& \le & \frac{T}{2}\|(I-P^r)u_{h,t}\|_0^2
 +\frac{1}{2T}\|e_r\|_0^2+L\left\| e_r\right\|_0^2
 +L\left\|(I-P^r)u_h\right\|_0\left\|e_r\right\|_0\nonumber\\
&\le&\frac{T}{2}\|(I-P^r)u_{h,t}\|_0^2
 +\frac{1}{2T}\|e_r\|_0^2+L\|e_r\|_0^2 
 +\frac{T}{2}L^2\|(I-P^r)u_h\|_0^2+\frac{1}{2T}\|e_r\|_0^2.
  \end{eqnarray}
Multiplication by $2$ leads to 
\[
 \frac{d}{dt}\|e_r\|_0^2+2\nu\|\nabla e_r\|_0^2 \le K\|e_r\|_0^2
 +T\|(I-P^r)u_{h,t}\|_0^2+TL^2\|(I-P^r)u_h\|_0^2,
\]
where
  \begin{equation}
 \label{constant_K}
 K=\frac{2}{T} + 2L.
\end{equation}
Integrating in time and applying Gronwall's lemma we reach \eqref{er_pro_ur}.
 \end{proof}
 
 We now extend the error analysis to a general function that does not need to be globally Lipschitz continuous. Let $g$
 be the nonlinear term in \eqref{eq:model}. 
 We define the following quantities, with $c_{\rm p,inf}$ the constant in \eqref{eq:cota_Pr}:
 \begin{align}
 \label{L4}
 L_4&=\max\left\{ \frac{\left| g(s)-g(t)\right|}{\left|s-t\right|}\ \mid\  s\ne t,\quad \left|s\right|,\left| t\right|\le 4c_{\rm p,inf}\right\} ,\\
 \label{F4}
 F_4&=\max\left\{ \left| g(s)\right|\ \mid\  \left|s\right|\le 4c_{\rm p,inf}\right\}.
 \end{align}
 Let us consider  the following cut-off function
  \begin{eqnarray*}
 %\label{theta}
 \theta (s)=\left\{ \begin{array}{lcl} 1,& \quad& \left|s\right|\le 1,\\
\frac{1}{2}\left(1-\tanh\left(\frac{(s-3/2)}{(s-1)(2-s)}\right)\right),&&   1< \left|s\right|<  2,\\
 0,&& \left|s\right|\ge 2,
 \end{array}\right.
\end{eqnarray*}
which is a nonnegative $C^\infty$ function taking values $1$ for $\left|s\right|\le 1$ and~$0$ for $\left|s\right|\ge 2$

and define
\begin{equation}
\label{tilde_g}
\tilde g(s)= \theta\left(\frac{s}{2c_{\rm p,inf}}\right)g(s)
\end{equation}
which coincides with~$g$ for~$\left|s\right|\le 2c_{\rm p,inf}$ and vanishes for $\left|s\right|\ge 4c_{\rm p,inf}$. A simple calculation shows that $\left|\theta'(s)\right| \le 2$,
so that function~$\tilde g$ is Lipschitz-continuous in~${\mathbb R}$ and its Lipschitz constant is
\begin{equation}
\label{la_L}
L=L_4+F_4/c_{\rm p,inf},
\end{equation}
with $L_4$ and $F_4$ the constants in \eqref{L4}, \eqref{F4}.
\begin{Theorem}\label{th2} Let $P^r u_h$ be the $H_0^1$-orthogonal projection 
onto $\bU^r$ of the semi-discrete Galerkin approximation $u_h$ defined in \eqref{gal_semi}. Assume $ {u_h}\in H^{q+1}(0,T;H^1)$ for some $q\ge 2$. Let $C_q$ denote the maximum of the constants~$C$ in~\eqref{eq:tra:pro_b}, \eqref{eq:tra:pro_t_b}, \eqref{eq:tra:pro_time_p},  and \eqref{eq:tra:pro_t_time}.
For the constant~$L$ in~\eqref{la_L}, let  $K$ denote the value $K=2/T+2L$.  Assume $\Delta t$ is small enough so that
\begin{equation}
\label{condi2}
(\Delta t)^{2q}\left(L^2 \int_0^T\left\| \frac{\partial^q\nabla  u_h(t)}{\partial t^q}\right\|_0^2\ \di t  +
K_2 \int_0^T\left\| \frac{\partial^{q+1}\nabla  u_h(t)}{\partial t^{q+1}}\right\|_0^2\ \di t \right)\le e^{-KT} \frac{c_{\rm p,\inf}^2}{2TC_p^2 {c_{\rm inv}^2}}h^{d},
\end{equation}
where~$K_2=1$ in the finite difference case and $K_2=(1+L^2T^2)$ in the time derivative case.
 
Then, there exist $r_0\in [1, d_r]$  such that for $r>r_0$
 the POD approximation solving \eqref{eq:pod} with $u_r(0)=P^ru_h(0)$

satisfies the following bound for all $t\in[0,T]$
\begin{eqnarray}\label{eq:er_pro_ur_b}
\lefteqn{\|u_r(t)-P^r u_h(t)\|_0^2+2\nu\int_0^t \|\nabla u_r(s)-P^r u_h(s)\|_0^2 \ \di s\le 
  e^{Kt}TC_p^2\left(K_1\sum_{k={r+1}}^{d_r}\lambda_k\right)}\nonumber\\
 &&+
  e^{Kt}TC_p^2\left((\Delta t)^{2q} \left(L^2 \int_0^T\left\| \frac{\partial^q\nabla  u_h(t)}{\partial t^q}\right\|_0^2\ \di t  +
K_2 \int_0^T\left\| \frac{\partial^{q+1}\nabla  u_h(t)}{\partial t^{q+1}}\right\|_0^2\ \di t \right)\right), 
 \end{eqnarray}
 where
\begin{equation}
\label{K1}
 K_1=C_q\left(L^2T\left(1+\frac{T^2}{\tau^2}\right) + \frac{T}{\tau^2}\right).
\end{equation}
\end{Theorem}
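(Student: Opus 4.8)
The strategy is to bootstrap from Theorem~\ref{th1}, applied not to $g$ itself but to the truncated nonlinearity $\tilde g$ defined in~\eqref{tilde_g}, which is globally Lipschitz with constant $L$ given by~\eqref{la_L}. Let $\tilde u_r$ denote the POD-ROM solution of~\eqref{eq:pod} with $g$ replaced by $\tilde g$ and with $\tilde u_r(0)=P^ru_h(0)$. Since Theorem~\ref{th1} applies verbatim to $\tilde u_r$ (note the initial term vanishes because $\tilde u_r(0)=P^ru_h(0)$), we obtain for all $t\in[0,T]$
\begin{equation*}
\|\tilde u_r(t)-P^ru_h(t)\|_0^2+2\nu\int_0^t\|\nabla(\tilde u_r(s)-P^ru_h(s))\|_0^2\,\di s
\le e^{KT}T\left(\int_0^T\|(I-P^r)u_{h,s}\|_0^2\,\di s+L^2\int_0^T\|(I-P^r)u_h\|_0^2\,\di s\right).
\end{equation*}
Now I would bound the two integrals on the right using the preliminary lemmas of Section~\ref{section4}: the term $\int_0^T\|(I-P^r)u_h\|_0^2\,\di s$ is controlled by~\eqref{eq:tra:pro_b} in the finite difference case and by~\eqref{eq:tra:pro_time_p} in the time derivative case, while $\int_0^T\|(I-P^r)u_{h,s}\|_0^2\,\di s$ is controlled by~\eqref{eq:tra:pro_t_b} and~\eqref{eq:tra:pro_t_time} respectively. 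Combining these four estimates (using $C_q$ as the common constant and collecting the $\lambda_k$-terms into $K_1$ as in~\eqref{K1} and the temporal-derivative terms into the $K_2$-form) yields precisely the right-hand side of~\eqref{eq:er_pro_ur_b}, now for $\tilde u_r$ in place of $u_r$.

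The remaining and genuinely delicate point is to show that $\tilde u_r$ actually coincides with $u_r$, i.e.\ that the truncation is never activated along the trajectory. For this I would argue that, under the smallness condition~\eqref{condi2} on $\Delta t$, the just-derived bound forces $\|\tilde u_r(t)-P^ru_h(t)\|_0^2$ to be small enough that, after an inverse inequality in space, $\|\tilde u_r(t)-P^ru_h(t)\|_\infty\le c_{\rm p,inf}$. Concretely, since $\tilde u_r(t)-P^ru_h(t)\in\bU^r$, the inverse inequality~\eqref{inv} gives $\|\tilde u_r(t)-P^ru_h(t)\|_\infty\le c_{\rm inv}h^{-d/2}\|\tilde u_r(t)-P^ru_h(t)\|_0$; combining this with the error bound and with Lemma~\ref{lema12} (which gives $\|P^ru_h(t)\|_\infty\le c_{\rm p,inf}$), one gets $\|\tilde u_r(t)\|_\infty\le 2c_{\rm p,inf}$ provided the right-hand side of the error bound is at most $e^{-KT}c_{\rm p,inf}^2h^d/(c_{\rm inv}^2)$ up to the $TC_p^2$ factors. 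The $\lambda_k$-part of the right-hand side can be made arbitrarily small by choosing $r$ large enough (this defines $r_0$); the $(\Delta t)^{2q}$-part is handled exactly by assumption~\eqref{condi2}. On the region $|s|\le 2c_{\rm p,inf}$ one has $\tilde g(s)=g(s)$, so $\tilde u_r$ solves the original POD-ROM equation~\eqref{eq:pod}; by uniqueness of the solution of the ODE system~\eqref{eq:pod} (guaranteed by local Lipschitz continuity of $g$), $\tilde u_r=u_r$ on $[0,T]$, and the bound~\eqref{eq:er_pro_ur_b} holds for $u_r$ as claimed.

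The main obstacle is the circularity inherent in this continuation argument: the $L^\infty$ bound on $\tilde u_r$ that keeps the truncation inactive is itself a consequence of the error estimate, which in turn uses that $\tilde u_r$ stays in the regime where $\tilde g=g$. I would resolve this in the standard way, by a bootstrap/continuity argument on the maximal time interval on which $\|\tilde u_r(t)-P^ru_h(t)\|_\infty\le c_{\rm p,inf}$ holds: the error bound proved above is \emph{uniform} on $[0,T]$ and does not degrade on this interval, so if the interval were a proper subinterval $[0,t^*)\subsetneq[0,T]$ one could pass to the limit $t\to t^*$ and, using~\eqref{condi2} with strict inequality, contradict maximality. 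Care is needed to verify that $\tilde g$ is indeed globally Lipschitz with the stated constant $L=L_4+F_4/c_{\rm p,inf}$ (using $|\theta'|\le 2$ and the chain rule, with the factor $1/(2c_{\rm p,inf})$ from the argument scaling cancelling the $2$), and that the constant $C_q$ is legitimately the common maximum of the four constants appearing in Lemmas~\ref{le:tra_cota_pro_b}, \ref{le:tra_cota_pro_t_b}, \ref{le:tra_cota_pro_time}, \ref{le:tra_cota_pro_t_time}; both are routine but must be stated.
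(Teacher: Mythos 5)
Your proposal is correct and follows essentially the same route as the paper: introduce $\tilde u_r$ solving \eqref{eq:pod} with the truncated nonlinearity $\tilde g$, apply Theorem~\ref{th1} (noting $u_h$ also solves \eqref{gal_semi} with $\tilde g$), bound the two integrals by the four lemmas of Section~\ref{section4}, and then use the inverse inequality together with \eqref{eq:cota_Pr}, condition \eqref{condi2} and a choice of $r_0$ controlling the eigenvalue tail to get $\|\tilde u_r\|_\infty\le 2c_{\rm p,inf}$, whence $\tilde u_r=u_r$. The only deviation is your closing bootstrap-on-a-maximal-interval argument, which is unnecessary: since $\tilde g$ is globally Lipschitz, the error bound for $\tilde u_r$ holds unconditionally on all of $[0,T]$, so there is no circularity to resolve.
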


\begin{proof}
Let~$\tilde u_r$ be the solution of the POD-ROM system~\eqref{eq:pod} with $\tilde g$ instead of~$g$ and initial condition $\tilde u_r(0)=P^r u_h(0)$. We first observe that the constant $c_{\rm inf}$ in \eqref{uhuht_inf} satisfies $c_{\rm inf}\le c_{\rm p,inf}$, compare \eqref{como}. Consequently, the solution  
$u_h$ of \eqref{gal_semi} solves also \eqref{gal_semi} with $\tilde g$ instead of
$g$ (since $\tilde g(u_h)=g(u_h))$. Applying Theorem~\ref{th1} with $\tilde g$ gives
\begin{eqnarray}\label{eq:er_pro_ur}
\lefteqn{\|\tilde u_r(t)-P^r u_h(t)\|_0^2+2\nu \int_0^t \left\|\nabla(\tilde u_r(s) -P^r u_h(s))\right\|_0^2\ \di s}\nonumber
\\
&\le &  e^{KT}T\left(\int_0^t \|(I-P^r)u_{h,s}(s)\|_0^2\ \di s+L^2\int_0^t\|(I-P^r)u_h(s)\|_0^2 \ \di s\right).
 \end{eqnarray}
 In the finite difference case we apply \eqref{eq:tra:pro_t_b}  to bound the first term on the
 right-hand side above and \eqref{eq:tra:pro_b} to bound the second one.
 In the time derivative case we use  \eqref{eq:tra:pro_t_time} and  \eqref{eq:tra:pro_time_p} to bound the first and
 second term, respectively.  Then, we obtain an estimate of form \eqref{eq:er_pro_ur_b}, but with $\tilde u_r$ instead of $u_r$:
 \begin{eqnarray}\label{bb_ii}
\lefteqn{\|\tilde u_r(t)-P^r u_h(t)\|_0^2+2\nu\int_0^t \|\nabla \tilde u_r(s)-P^r u_h(s)\|_0^2 \ \di s\le 
  e^{Kt}TC_p^2\left(K_1\sum_{k={r+1}}^{d_r}\lambda_k\right)}\nonumber\\
 &&+
  e^{Kt}TC_p^2\left((\Delta t)^{2q} \left(L^2 \int_0^T\left\| \frac{\partial^q\nabla  u_h(t)}{\partial t^q}\right\|_0^2\ \di t  +
K_2 \int_0^T\left\| \frac{\partial^{q+1}\nabla  u_h(t)}{\partial t^{q+1}}\right\|_0^2\ \di t \right)\right).\quad
 \end{eqnarray}
 
For obtaining \eqref{eq:er_pro_ur_b} it is sufficient to find 
a condition such that $\left\|\tilde u_r(t)\right\|_\infty \le 2c_{\rm p,inf}$ for all $t\in [0,T]$,
since in that case $\tilde g(\tilde u_r)=g(\tilde u_r)$ and then
$\tilde u_r$ is the solution of \eqref{eq:pod} with initial condition $P^r u_h(0)$.
Using the triangle inequality
$$
\left\| \tilde u_r(t)\right\|_\infty^2\le 2\left\| P^r u_h(t)\right\|^2_\infty+2\left\| \tilde u_r(t)-P^r u_h(t)\right\|_\infty^2
$$
and applying \eqref{eq:cota_Pr}, the inverse inequality \eqref{inv}, \eqref{bb_ii} and \eqref{condi2}  we deduce that
\begin{equation}
\label{almost}
\left\| \tilde u_r(t)\right\|_\infty^2 \le 2 c_{\rm p, inf}^2 + {c_{\rm p, inf}^2}+2c_{\rm inv}h^{-d}e^{KT}T C_p^2\left(K_1\sum_{k={r+1}}^{d_r}\lambda_k\right).
\end{equation}
Taking $r_0$ such that
$$
2c_{\rm inv}h^{-d}e^{KT}T C_p^2\left(K_1\sum_{k={r_0+1}}^{d_r}\lambda_k\right)\le c_{\rm p,inf}^2
$$
finishes the proof.
\end{proof}
\begin{remark}
  Let us observe that the previous theorem requires $h^{-d/2}\left(\sum_{k={r+1}}^{d_r}{\lambda_{k}}\right)^{1/2}\le C$
  which is a stronger condition than the one assumed in Lemma \ref{lema12} for $h\le 1$. Let us also observe that condition \eqref{condi2} holds for $q= 2$ whenever $\Delta t \le C h^{d/4}$ which is also a stronger time step restriction, for $h < 1$, than that assumed in Lemma~\ref{lema12}. This means that the
  assumptions needed to prove the a priori bounds for the projection $P^r u_h$ in the $L^\infty$ norm are already included into
  the assumptions of Theorem \ref{th2}. 
  \end{remark}
\begin{Theorem}\label{th3}
Under assumptions of Theorem~\ref{th2} there exists a constant $C>0$, depending on $T$, $C_q$ and the constant~$L$ in~\eqref{la_L},
such that the  following bound is valid
\begin{eqnarray}\label{max_puntual}
\max_{0\le n\le M} \|u_r(t_n)-u_h(t_n)\|_0^2&\le& 
   C \sum_{k={r+1}}^m\lambda_k\\
&&+C(\Delta t)^{2q}\int_{0}^{T}\left( \left\|\frac{\partial^{q} \nabla u_h(t)}{\partial t^{q}}\right\|_0^2+\left\| \frac{\partial^{q+1} \nabla u_h(t)}{\partial t^{q+1}}\right\|_0^2 \right)\ \di t.\nonumber
\end{eqnarray}
Moreover, for all $t\in[0,T]$ assuming also $\frac{\partial^{q} u_h}{\partial t^{q}}\in L^\infty([0,T];H^1)$, $q\ge 2$, it holds that 
\begin{eqnarray}\label{max_puntual_todo}
\|u_r(t)-u_h(t)\|_0^2&\le& 
   C \left(\sum_{k={r+1}}^m\lambda_k+(\Delta t)^{2q}
\int_{0}^{T}\left\| \frac{\partial^{q+1} \nabla u_h(t)}{\partial t^{q+1}}\right\|_0^2 \ \di t\ \right)
\\\nonumber&&+C (\Delta t)^{2q} \max_{0\le t\le T}\left\| \frac{\partial^{q} \nabla u_h(t)}{\partial t^{q}}\right\|_0^2.
\end{eqnarray}
\end{Theorem}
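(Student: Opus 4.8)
The plan is to write $u_r(t)-u_h(t) = \bigl(u_r(t)-P^r u_h(t)\bigr) - (I-P^r)u_h(t)$ and to estimate the two pieces separately, using $\|a+b\|_0^2\le 2\|a\|_0^2+2\|b\|_0^2$. For the first piece, Theorem~\ref{th2} applies directly (its hypotheses are exactly the standing assumptions here and $u_r(0)=P^ru_h(0)$), so \eqref{eq:er_pro_ur_b} gives, for every $t\in[0,T]$,
\[
\|u_r(t)-P^ru_h(t)\|_0^2 \le e^{KT}TC_p^2 K_1\sum_{k=r+1}^{d_r}\lambda_k + e^{KT}TC_p^2(\Delta t)^{2q}\Bigl(L^2\!\int_0^T\!\!\Bigl\|\tfrac{\partial^q\nabla u_h}{\partial t^q}\Bigr\|_0^2\di t + K_2\!\int_0^T\!\!\Bigl\|\tfrac{\partial^{q+1}\nabla u_h}{\partial t^{q+1}}\Bigr\|_0^2\di t\Bigr).
\]
After absorbing $e^{KT}$, $T$, $C_p$, $\tau$, $K_1$, $K_2$, $L$ into a single constant $C=C(T,C_q,L)$ (recall $K_1$, $K_2$ depend only on $T$, $\tau$, $L$, $C_q$), this is already of the form of the right-hand sides of \eqref{max_puntual} and \eqref{max_puntual_todo}; so it only remains to bound the projection error $(I-P^r)u_h$.

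For \eqref{max_puntual} one needs $(I-P^r)u_h$ only at the snapshot times. Poincar\'e's inequality \eqref{poincare} gives $\|(I-P^r)u_h(t_n)\|_0\le C_p\|\nabla(I-P^r)u_h(t_n)\|_0$, and then the second line of \eqref{max_dif} (finite-difference case) or of \eqref{max_dif_time_p} with the present $q$ (time-derivative case) bounds $\max_{0\le n\le M}\|\nabla(I-P^r)u_h(t_n)\|_0^2$ by $C\sum_{k=r+1}^{d_r}\lambda_k$, plus, in the time-derivative case, the extra term $CT(\Delta t)^{2q}\int_0^T\|\partial^{q+1}\nabla u_h/\partial t^{q+1}\|_0^2\,\di t$. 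Combining this with the displayed estimate evaluated at $t=t_n$ (using $e^{Kt_n}\le e^{KT}$) and the triangle inequality yields \eqref{max_puntual}.

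For \eqref{max_puntual_todo} one needs a pointwise-in-time control of $(I-P^r)u_h(t)$ at an arbitrary $t\in[t_n,t_{n+1}]$. Following the device of Lemma~\ref{le:bosco} and the remark after Lemma~\ref{le:our_etal_tra}, I split $u_h(t)=I_{q,t}u_h(t)+R_{q,t}u_h(t)$, where $I_{q,t}$ is the Lagrange interpolant in time on $q$ consecutive nodes chosen inside $\{t_0,\dots,t_M\}$ so that the stencil contains $t_n$ and $t_{n+1}$, and $R_{q,t}=I-I_{q,t}$. Since the Lagrange basis polynomials on a stencil of $q$ equispaced nodes are bounded on the stencil by a constant depending only on $q$, one gets $\|(I-P^r)I_{q,t}u_h(t)\|_0\le C(q)\max_{0\le j\le M}\|(I-P^r)u_h(t_j)\|_0$, which is controlled by the previous step. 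For the remainder, $\|(I-P^r)R_{q,t}u_h(t)\|_0\le C_p\|\nabla R_{q,t}u_h(t)\|_0=C_p\|R_{q,t}\nabla u_h(t)\|_0$, and the pointwise-in-time Lagrange error estimate gives $\|R_{q,t}\nabla u_h(t)\|_0\le c_{\rm int}(\Delta t)^q\max_{0\le s\le T}\|\partial^q\nabla u_h(s)/\partial t^q\|_0$ — which is exactly where the extra hypothesis $\partial^q u_h/\partial t^q\in L^\infty([0,T];H^1)$ and the $\max_{0\le t\le T}$ in the last term of \eqref{max_puntual_todo} come from. Squaring, using $\|a+b\|_0^2\le 2\|a\|_0^2+2\|b\|_0^2$ twice, adding the bound for $\|u_r(t)-P^ru_h(t)\|_0^2$, and collecting constants gives \eqref{max_puntual_todo}.

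I expect the only genuinely nontrivial point to be this last step: the $L^2(t_0,T)$-in-time estimates of Section~\ref{section4} do not suffice for a maximum-in-time statement, so one must interpolate in time with a high-order ($q$-point) stencil and use the $L^\infty$-in-time form of the Lagrange error bound; everything else is the triangle inequality together with direct quotation of Theorem~\ref{th2}, \eqref{max_dif}, \eqref{max_dif_time_p} and Poincar\'e's inequality.
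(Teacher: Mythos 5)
Your proposal is correct and follows essentially the same route as the paper: the same splitting $u_r-u_h=(u_r-P^ru_h)+(P^ru_h-u_h)$, Theorem~\ref{th2} for the first piece, \eqref{max_dif} or \eqref{max_dif_time_p} at the snapshot times for \eqref{max_puntual}, and for \eqref{max_puntual_todo} the same $q$-node Lagrange interpolation in time with the $L^\infty$-in-time remainder estimate, which is precisely the paper's bound \eqref{new_cota_int_p} leading to \eqref{eq:I_Pr_u_h_dif} and \eqref{eq:I_Pr_u_h_time}. The only cosmetic difference is that you invoke Poincar\'e plus the gradient line of \eqref{max_dif}, whereas the paper quotes the $L^2$ line directly; these are equivalent.
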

\begin{proof}
Since 
\begin{equation}\label{decom_ur_uh}
u_r(t)-u_h(t)\le \left(u_r(t)-P^r u_h(t)\right)+\left(P^r u_h(t)-u_h(t)\right)=e_r(t)+\left(P^r u_h(t)-u_h(t)\right)
\end{equation}
we can apply Theorem \ref{th2} with $t=t_n$ to bound the first term on the right-hand side. 
The second term is bounded by \eqref{max_dif} or \eqref{max_dif_time_p} in the
finite difference or time derivative case, respectively. Combining all bounds gives 
\eqref{max_puntual}.

To prove \eqref{max_puntual_todo} we also apply Theorem~\ref{th2} to bound the first term. For the second term 
we argue as in Lemma~\ref{le:bosco}, compare also Remark~\ref{rem:rem1}, for estimating $\|(I-P^r)\varphi(t)\|_0$, $t\in[0,T]$, in terms of the values at the discrete set of points. 

Assume $t\in[t_n,t_{n+1}]$.
We take the Lagrange interpolant in time based on $q$ nodes, $t_m,t_{m+1},\ldots,t_{m+q-1}$ (with $t_n$, $t_{n+1}$ being
part of the nodes and 
taking care that
the nodes are always a subset of $\left\{t_0,\ldots,t_M\right\}$) and 
argue as in \eqref{eq:bosco2} and \eqref{eq:bosco3}, 
but bounding the $L^\infty(t_n,t_{n+1})$ norm instead of the $L^2(t_n,t_{n+1})$ norm, 
to get 
\begin{eqnarray}\label{new_cota_int_p}
\|(I-P^r)\varphi(t)\|_0^2 &\le& C\sum_{l=1}^q\|(I-P^r) \varphi(t_{m+1-l})\|_0^2+C (\Delta t)^{2q}
\max_{0\le t\le T}\left\| \frac{\partial^q \nabla \varphi(t)}{\partial t^{q}}\right\|_0^2\nonumber\\
&\le& C q \max_{0\le n\le M}\|(I-P^r) \varphi(t_n)\|_0^2+C (\Delta t)^{2q}
\max_{0\le t\le T}\left\| \frac{\partial^q \nabla \varphi(t)}{\partial t^{q}}\right\|_0^2.
\end{eqnarray}
Applying \eqref{new_cota_int_p} to $\varphi=u_h$ and utilizing \eqref{max_dif} in the
finite difference case yields
\begin{equation}\label{eq:I_Pr_u_h_dif}
\|(I-P^r)u_h(t)\|_0^2 \le C \sum_{k={r+1}}^m\lambda_k+C (\Delta t)^{2q} \max_{0\le t\le T}\left\| \frac{\partial^{q} \nabla u_h(t)}{\partial t^{q}}\right\|_0^2,\quad t\in[0,T].
\end{equation}
In the time derivative case, we obtain with \eqref{max_dif_time_p}
\begin{eqnarray}\label{eq:I_Pr_u_h_time}
\|(I-P^r)u_h(t)\|_0^2 &\le& C \sum_{k={r+1}}^m\lambda_k+C(\Delta t)^{2q}\int_{0}^{T}\left\| \frac{\partial^{q+1} \nabla u_h(t)}{\partial t^{q+1}}\right\|_0^2 \ \di t\nonumber\\
&&+C (\Delta t)^{2q} \max_{0\le t\le T}\left\| \frac{\partial^{q} \nabla u_h(t)}{\partial t^{q}}\right\|_0^2,\quad t\in[0,T].
\end{eqnarray}
Inserting \eqref{eq:I_Pr_u_h_dif} and \eqref{eq:I_Pr_u_h_time} into \eqref{decom_ur_uh} we conclude \eqref{max_puntual_todo}.
\end{proof}
 
Since time integrators approximate the solution of the corresponding continuous-in-time problem, with a temporal error being proportional to powers of the time step used,
the essential consequence of the presented analysis is that any time integrator in the FOM method and any time integrator in the POD-ROM method can be used,
their errors being added to those studied above, which depend on the tail of the eigenvalues and
on $\Delta t$, i.e., the distribution of the snapshots in time. 
The influence of $\Delta t$ is determined by  how many bounded time derivatives $u_h$ has and by the
size of the time derivatives, since we can apply \eqref{max_puntual_todo} for different values of $q$. 

The error estimate \eqref{max_puntual_todo} implies that we can approximate the continuous equation at any time with the reduced order model although we have computed the snapshots only in a discrete set of times.
It will be shown in the next section, with some numerical examples, that a small number of snapshots, corresponding to a small number of discrete times, is enough to accurately reproduce the errors in the full interval.
 
\section{Numerical studies}\label{sec:numres}

 We consider the system known as the Brusselator with diffusion
\begin{equation}
\label{bruss}
\begin{array}{rclcl}
u_t&=&\nu\Delta u +1 +u^2v  - 4 u,&\qquad& (t,x)\in \Omega \times (0,T],\\
v_t &=&\nu \Delta v+3 u -u^2v,& \qquad
& (t,x)\in \Omega \times (0,T],\\
%&& u(x,0)= u_0(x), \quad v(x,0)=v_0(x),&& x\in \Omega,\\
&&u(x,t)=1, \quad v(x,t) =3,&& (t,x)\in \Gamma_1\times(0,T],\\
&&\partial_n u(x,t)=\partial_n v(t,x) =0,&&  (x,t)\in \Gamma_2\times(0,T],\\
\end{array}
\end{equation}
where $\nu$ is a positive parameter, $\Omega=[0,1]\times[0,1]$, $\Gamma_1\subset\partial\Omega$ is the union of of sides
$\{x=1\}\bigcup \{ y=1\}$, and~$\Gamma_2$ is the rest of~$\partial\Omega$. This system has an unstable equilibrium $u=1$, $v=3$, and, for $\nu$, sufficiently small, a stable limit cycle (see e.g., \cite[\S~I.16,\S~2.10]{Hairer}). If the boundary conditions are $\partial_n u(x,t)=\partial_n v(x,t) =0$ for $(t,x)\in(0,T]\times\partial\Omega$, then, the periodic orbit is flat in space, $\nabla u=\nabla v=0$, but with those in~\eqref{bruss} it has more spatial complexity. This can be seen in Fig.~\ref{solus}, where we show the solution at the time where the maximum of~$\left\|\nabla u\right\|_0^2+\left\| \nabla v\right\|_0^2$ is achieved.
\begin{figure}[h]
\begin{center}
\includegraphics[height=4truecm]{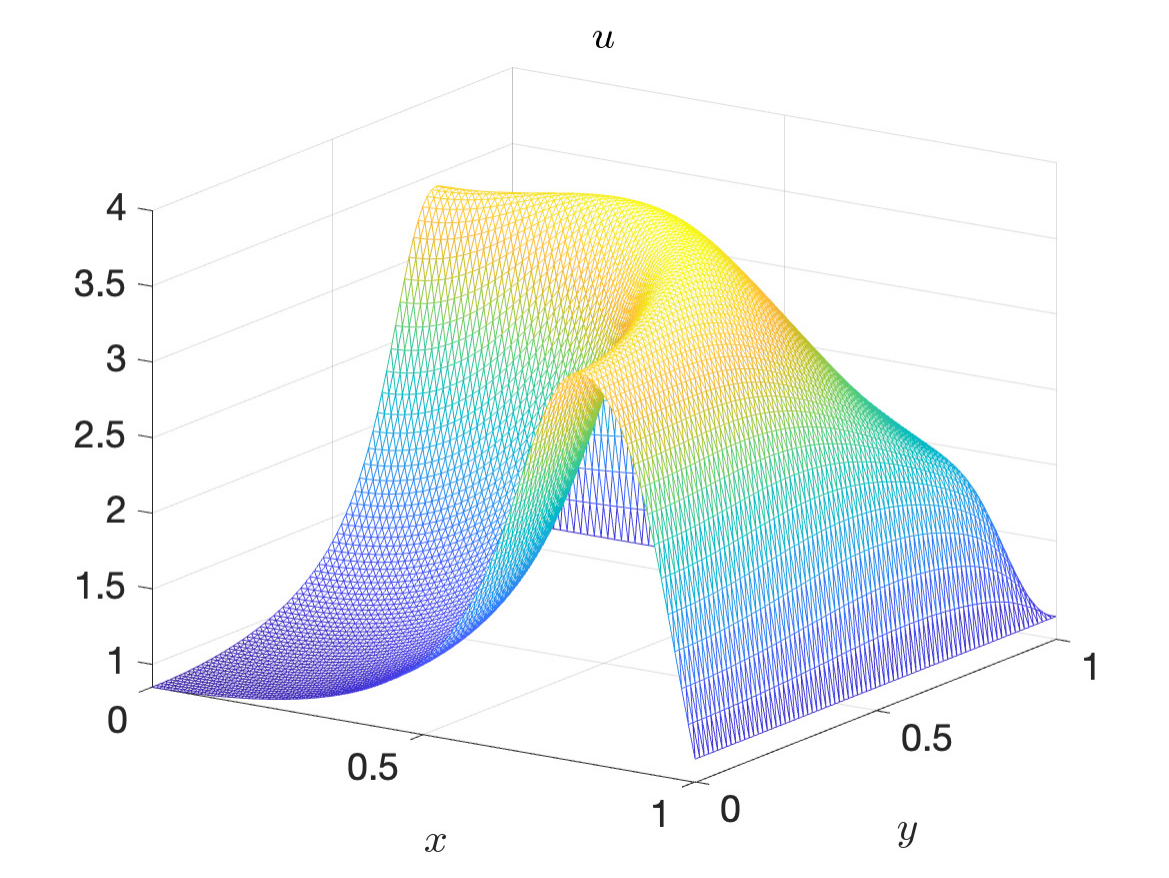}
\quad
\includegraphics[height=4truecm]{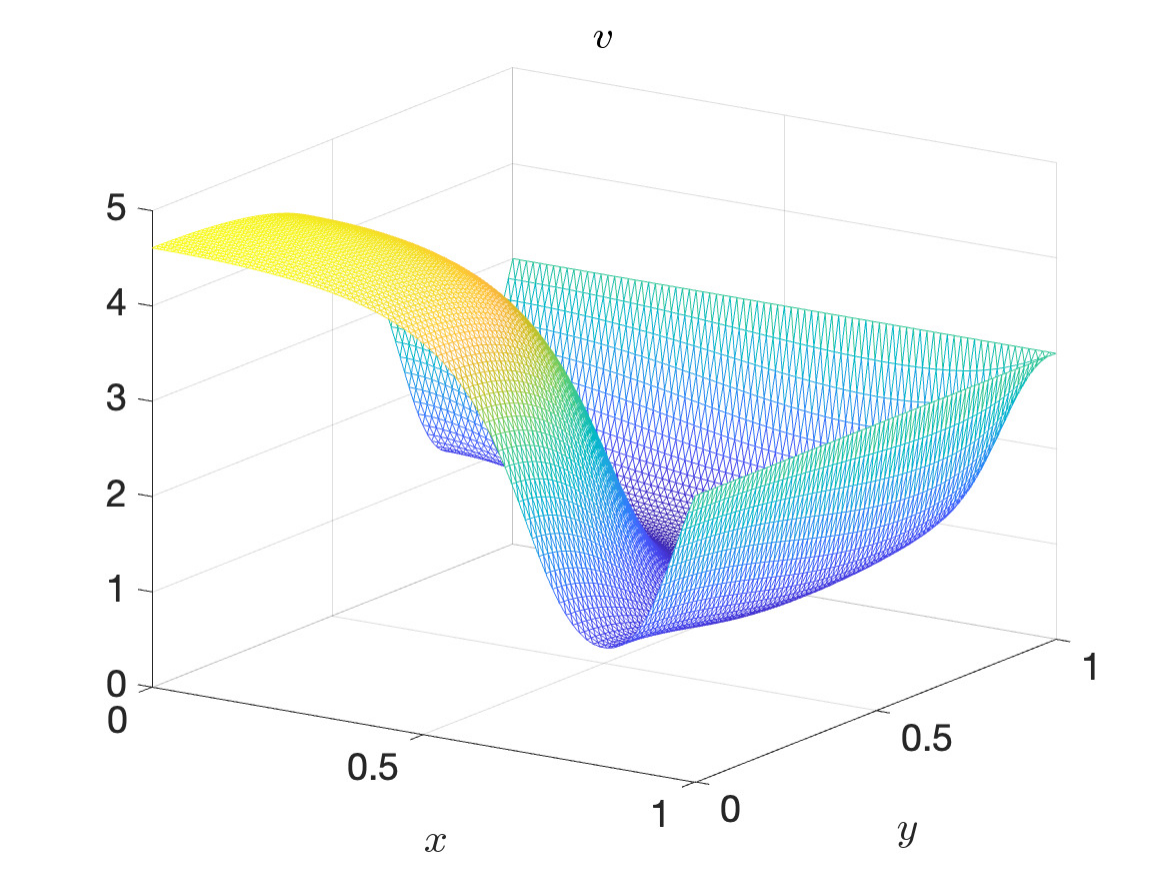}
\begin{caption}{\label{solus} Components $u$ (left) and~$v$ (right) of the periodic solution at time $t=6.5159$ where the maximum of~$\left\|\nabla u\right\|_0^2+\left\| \nabla v\right\|_0^2$ is achieved.}
\end{caption}
\end{center}
\end{figure}

For several values of~$\nu$ we computed finite element approximations to this limit cycle, and took different sets of snapshots at equally spaced times from it. We summarize here the results for $\nu=0.002$, which offered an adequate balance between spatial complexity and computability. In this case we consider a triangulation of~$\Omega$ based on a uniform mesh with 80  subdivision per side and with diagonals running southwest-northeast. We used a spatial discretization with quadratic finite elements, which resulted in a system with 50558 degrees of freedom. For this discretization, we computed the corresponding limit cycle as the zero of a return map to a Poincar\'e section, the corresponding set of equations being solved by a Newton--Krylov method~(see \cite{Joan-periodic} for details), the period being~$T=7.090636$ (up to 7 significant digits).
For the time integration of the corresponding system of ordinary differential equations (ODEs) we used the numerical differentiation formulae (NDF) \cite{NDF1}, 
in the variable-step, variable-order implementation of {\sc Matlab}'s command {\tt ode15s} \cite{odesuite}, with sufficiently small tolerances for the local errors (below $10^{-10}$).

To have an idea of the accuracy of the (FEM approximation to the) computed orbit $\bu_h=(u_h,v_h)$, we also computed an approximation~$\bu_{h/2}=(u_{h/2},v_{h/2})$ to the periodic orbit on a mesh with 160 subdivisions on each side of $\Omega$,
to be considered as reference solution, and measured the difference between them over a period. The corresponding values are shown in Fig.~\ref{errsh}, where we see that they vary by an order of magnitude both in the~$L^2$ and~$H^1$ norms. The maximum of these errors is $5.39\times 10^{-5}$ in the $L^2$ norm and~$0.0273$ in the $H^1$ norm (both achieved at time~$t=6.9591$, a little after the maximum gradients are achieved). Because of the maximum (estimation of the) error achieved in the~$H^1$ norm, in the experiments below and unless stated otherwise, the dimension $r$ of the POD basis was the minimum for which,
\begin{equation}
\label{r_select}
\left(\sum_{k=r+1}^{d_r} \lambda_k\right) \le 0.0273,
\end{equation}
since larger values of~$r$ may produce smaller errors $\left\| \nabla(\bu_r - P^r\bu_h)\right\|_0$ while not improving the error~$\left\| \nabla(\bu_r - \bu)\right\|_0$.
\begin{figure}[t]
\begin{center}
\includegraphics[height=2.3truecm]{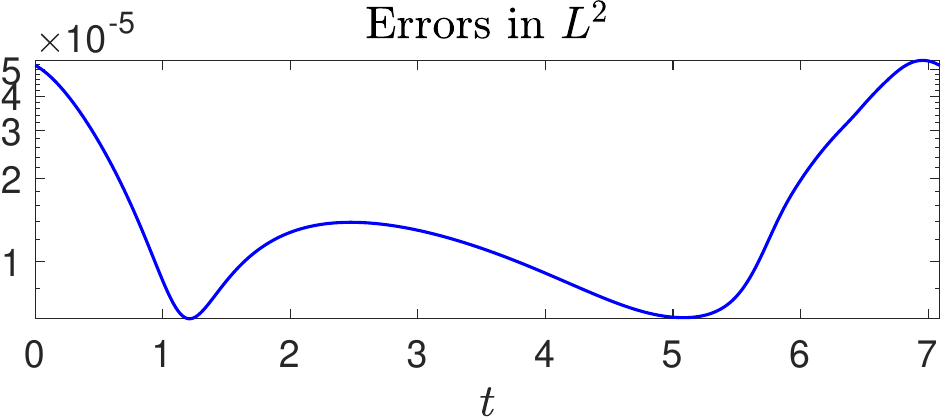}
\quad
\includegraphics[height=2.3truecm]{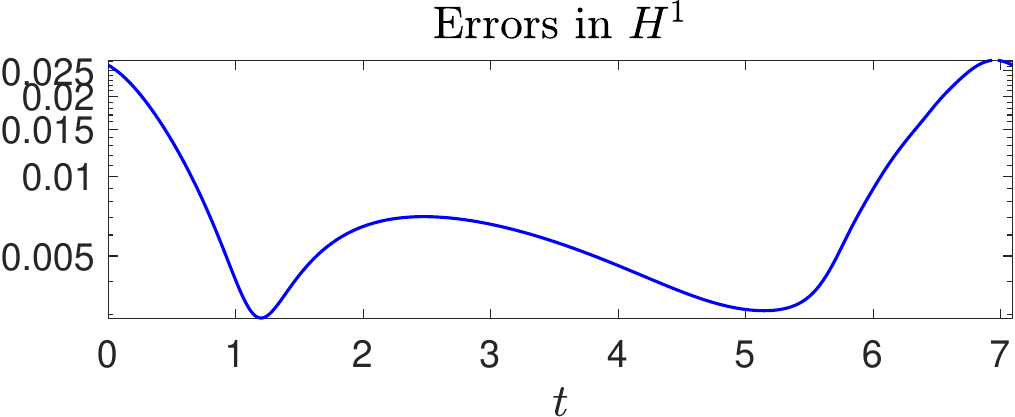}
\begin{caption}{\label{errsh} Estimation of the errors in~$L^2$ (left) and~$H^1$ (right) of the FEM approximation~$(u_h,v_h)$ to the periodic orbit solution of~\eqref{bruss}.}
\end{caption}
\end{center}
\end{figure}

{
It is interesting to check the size of different time derivatives, since these have a direct effect on the size of the errors of different approximations, see for example \eqref{max_puntual_todo}. In Fig.~\ref{derivatives} we show the $L^2$ norms of the time derivatives from the second to the fifth order. It can be seen that they attain their maximum value at the end of the period. In the figures below depicting errors of the POD method, we will see that peak errors are found also near the end of the period, where time derivatives of the solution are larger. We also see that the higher the order of the derivative, the larger the variation in size, the fifth derivative varying by five orders of magnitude between its smallest and largest values. 
\begin{figure}[h]
\begin{center}
\includegraphics[height=2.3truecm]{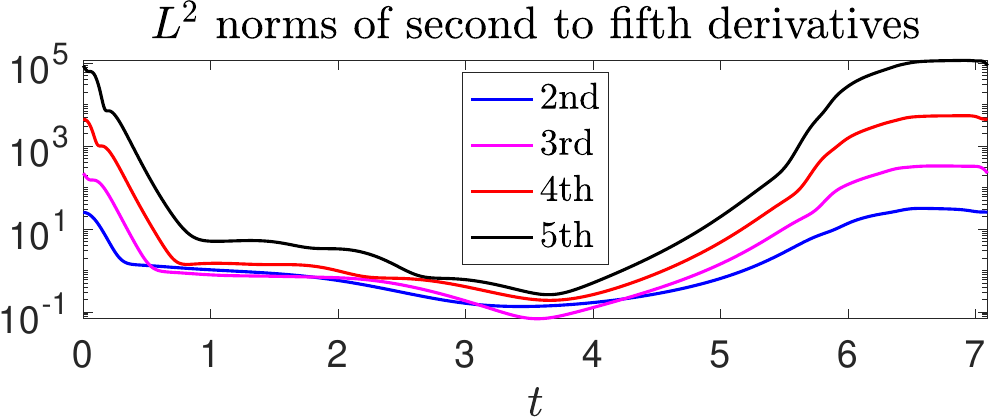}
\begin{caption}{\label{derivatives}  $L^2$ norms of the time derivatives of the finite element approximation $\bu_h$ to the periodic orbit solution of~\eqref{bruss}.}
\end{caption}
\end{center}
\end{figure}
}

We first check that, as stated by  Theorem~\ref{th3}, the POD method produces good continuous-in-time approximations. For this purpose, we computed the POD in finite differences case with $\Delta t=T/128$, where $T$ is the period (by trying with different values of $\Delta t$ we checked that this was enough for this purpose) and consider the POD method with $r=33$, selected as indicated above. Then, to simulate continuity in time we measured errors every T/2048 units of time (16 times denser than that of the snapshots). In Fig.~\ref{fig3}
 \begin{figure}[h]
 \begin{center}
\includegraphics[height=2.7truecm]{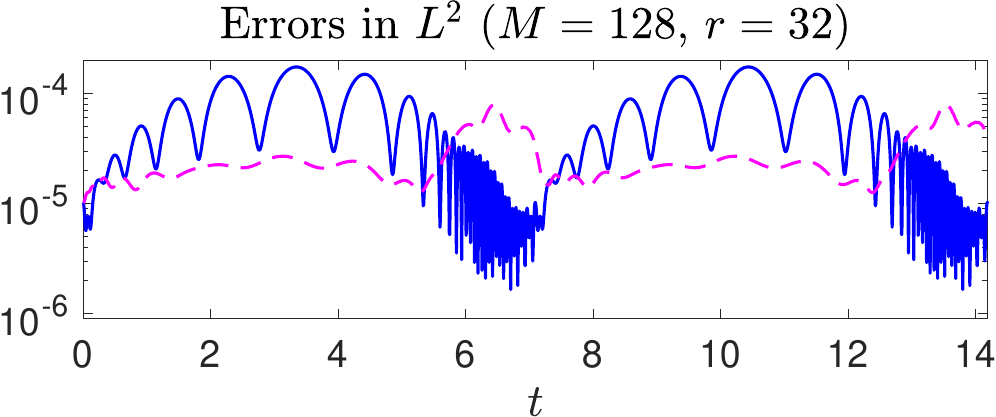}
%\quad
\includegraphics[height=2.7truecm]{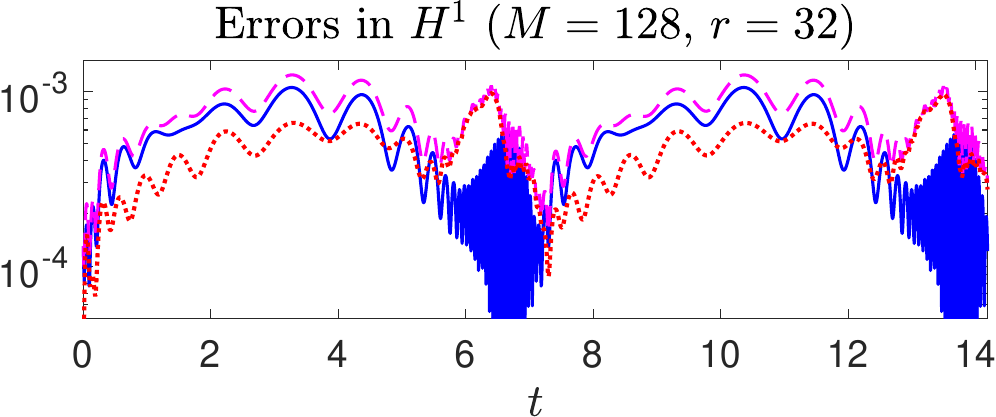}
\begin{caption}{\label{fig3} $L^2$ (left) and $H^1$ (right) errors  $\bu_h-P_r\bu_h$ (continuous blue line), $\bu_r-\bu_h$ (discontinuous line) and $\bu_r-P_h\bu_h$ (right plot only, red dotted line), for~ $M=128$, {$r=32$}. Errors measured every $T/2048$ units of time over two periods ($T$ being the period).}
\end{caption}
\end{center}
\end{figure}
we show the errors $\bu_h-P_r\bu_h$ (continuous line) and~$\bu_h-\bu_r$ (discontinuous line) in the $L^2$ norm (left plot) and in the $H^1$ norm (right plot) measured over two periods at every $T/2048$ units of time. We can see that the errors are small not only at the 128 times coinciding with those of the snapshots, but also on a grid 16 times denser over the two periods, confirming what Theorem~\ref{th3} and \eqref{eq:I_Pr_u_h_dif}, \eqref{eq:I_Pr_u_h_time} state.
We can also see that the errors $\bu_h-\bu_r$ become periodic after one period of~$\bu_h$, in both the $L^2$ and $H^1$ norms. We also notice that the errors $\left\| \bu_h-\bu_r\right\|_0$ (discontinuous line on the left plot) are much smaller than $\left\| \bu_h-P^r\bu_h\right\|_0$ (continuous line) over most of the period, %a phenomenon for which, at present, we do not have an explanation. 
{which is, in our opinion, it could be caused by the fact that the norm in which the error is measured 
is based on a different inner product than the projection that is used in the POD.}
On the right plot, however, $\left\|\nabla( \bu_h-\bu_r)\right\|_0$ is always above~$\left\| \nabla(I-P^r)\bu_h\right\|_0$, as it should be, since the latter is the best approximation error.
 On the right-plot, we also show the error
$\left\|\nabla(\bu_r-P^r\bu_r)\right\|_0$ (red dotted-line), which was estimated in~Theorem~\ref{th2}.

We now check the influence of a larger spacing between snapshots. In Fig.~\ref{fig4} we show
 \begin{figure}[h]
 \begin{center}
\includegraphics[height=2.7truecm]{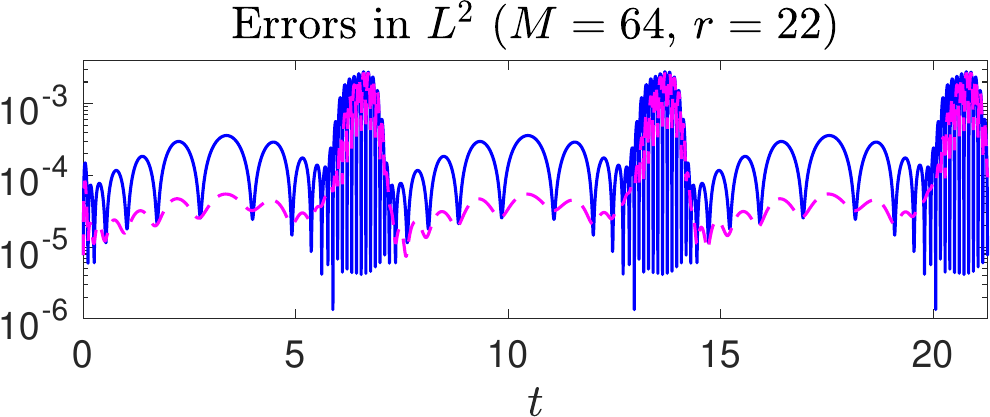}
%\quad
\includegraphics[height=2.7truecm]{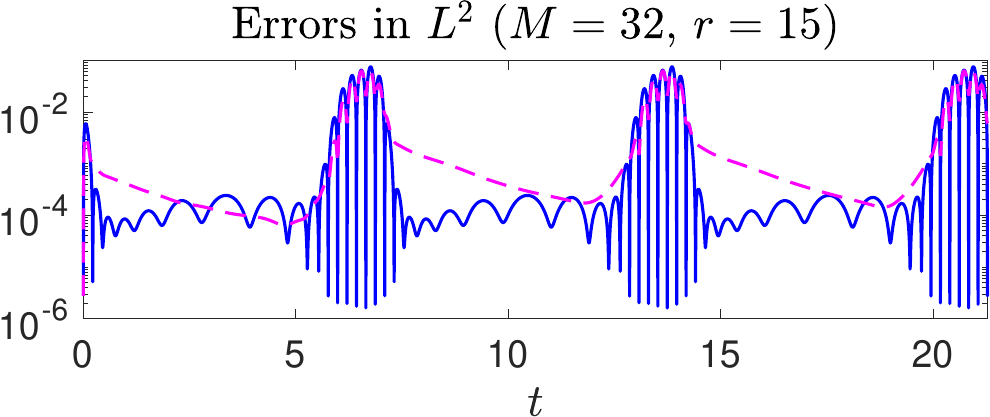}
\begin{caption}{\label{fig4} Errors $\left\|\bu_h-P_r\bu_h\right\|_0$ (continuous blue line), $\left\|\bu_r-\bu_h\right\|_0$ (discontinuous line) for~$M=64$, $r=22$ on the left and $M=32$, $r=15$ on the right.  Errors measured every $T/2048$ time units over three periods ($T$ being the period).}
\end{caption}
\end{center}
\end{figure}
$\left\|\bu_h-P_r\bu_h\right\|_0$ (continuous blue line), $\left\|\bu_r-\bu_h\right\|_0$ (discontinuous line) for~$M=64$ (left plot) and~$M=32$ (right plot), with errors measured every~$T/2048$ units of time, as before but on three periods. Comparing with the left plot in Fig.~\ref{fig3} we notice that in~Fig.~\ref{fig4}
the largest errors are considerably larger in Fig.~\ref{fig4}, where both errors reach the values $0.0028$ (left plot) and $0.0751$ (right-plot), than in~Fig.~\ref{fig3}, where they stay below $1.75\times 10^{-4}$. If, with $M=64$ and~$M=32$ we use the same value of~$r$ as with $M=128$, that is, $r=32$ the errors hardly decrease to~$0.0025$ and $0.0242$ for $M=64$ and~$M=32$, respectively, suggesting that the larger values of~$\Delta t$ is the factor limiting accuracy.
Also, comparing with Fig.~\ref{fig3},  we notice that in Fig.~\ref{fig4} it takes longer for the periodic behavior to be seen, another indication that POD approximation worsen if $\Delta t$ becomes too large.

To check further values of spacing between snapshots, we measured the errors every $T/2048$ units of time for different values of $M$. The results are shown in Table~\ref{table1}, where we show the maximum errors over four periods.
\begin{table}[h]
$$
\begin{array}{|r|c|c|c|c|c|}
\hline
\vphantom{\Big|}
M\  & r& \left\| \bu_h-\bu_r\right\|_0 & \left\| \bu_h-P^r\bu_h\right\|_0 & \left\|\nabla (\bu_h-\bu_r)\right\|_0 &  \left\|\nabla (I-P^r)\bu_h \right\|_0 \\ \hline
\vphantom{\big|} 512 &  35&   7.805\times 10^{-5} &    9.907\times 10^{-5}  &   8.576\times 10^{-4}  &   7.201\times 10^{-4} \\ \hline
\vphantom{\big|} 256  &  34&     7.446\times 10^{-5} &    9.456\times 10^{-5}  &   8.338\times 10^{-4}  &   7.026\times 10^{-4} \\ \hline
\vphantom{\big|} 128  & 32&     8.490 \times 10^{-5} &    1.745\times 10^{-4}  &   1.239\times 10^{-3}  &   1.050\times 10^{-3} \\ \hline
\vphantom{\big|} 64  &  22&      2.677\times 10^{-3} &     2.774\times 10^{-3}  &   1.210\times 10^{-1}  &   1.195\times 10^{-1} \\ \hline
\vphantom{\big|} 32  &  15&     6.268\times 10^{-2} &     7.508\times 10^{-2}  &   1.650\ \   &   1.593 \ \  \\ \hline
\end{array}
$$
\caption{\label{table1} 
Maximum errors for different values of $M$ measured every $T/2048$ units of time for four periods. Finite difference case.
}
\end{table}
We see that whereas for $M\ge 128$ the errors are almost independent of $\Delta t$ (to reduce them further the value of~$r$ should be increased) this is not the case for $M<128$, where every time the space between snapshots is doubled, errors multiply approximately by a factor of~$10$. Thus, of the different terms on the right-hand side of \eqref{max_puntual_todo}
and \eqref{eq:I_Pr_u_h_dif} those with powers of~$\Delta t$ become dominant for~$M< 128$.

One may wonder why the minimum value of~$r$ satisfying~\eqref{r_select} decreases when $\Delta t$ increases. The reason is that the larger the number of snapshots (the smaller the value of~$\Delta t$) the slower the decay of the eigenvalues~$\lambda$, as it can be checked on the left plot in Fig.~\ref{figsigmas},
\begin{figure}[h]
\begin{center}
\includegraphics[height=4truecm]{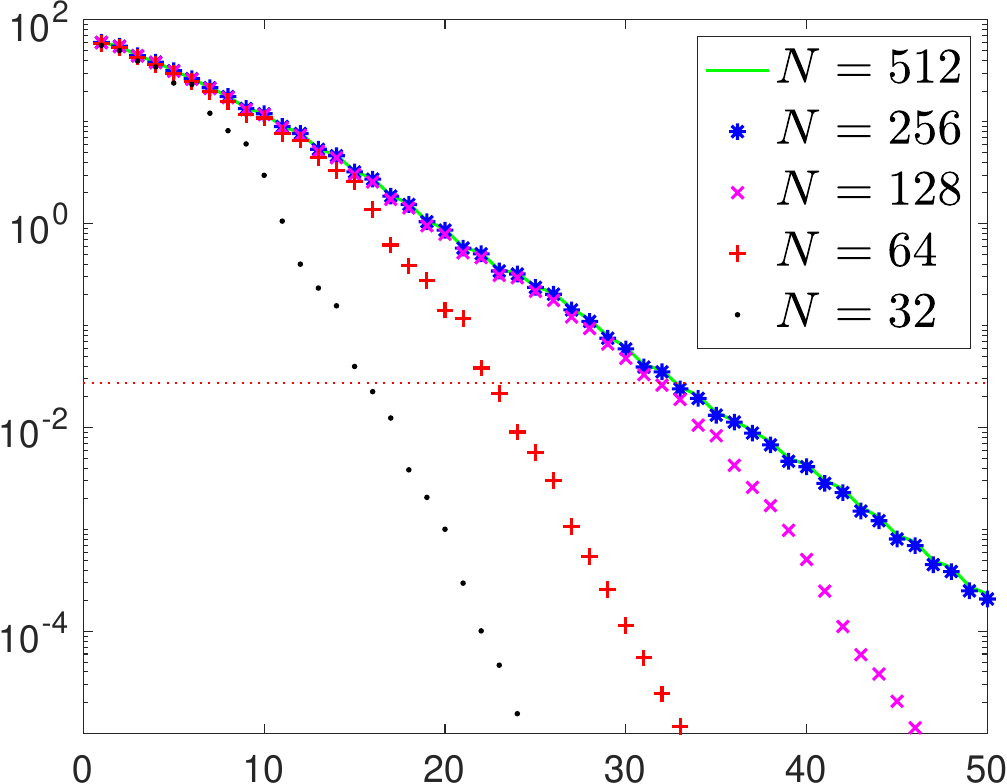}\quad \includegraphics[height=4truecm]{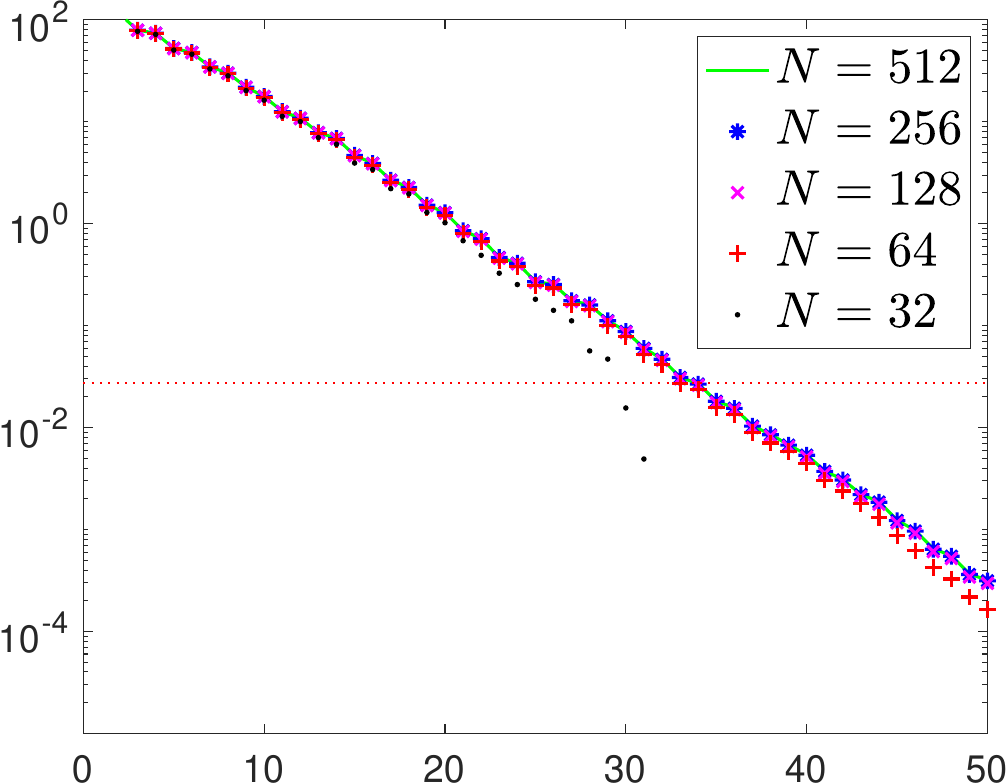}
\caption{\label{figsigmas}Singular values $\sigma_k=\sqrt{\lambda_k}$ for different values of~$M$. Left: equally-spaced times. Right, times distributed according to~\eqref{equi_formula}.}
\end{center}
\end{figure}
where we show singular values~$\sigma_k=\sqrt{\lambda_k}$ for different values of~$M$. The horizontal red dotted line marks $0.0273$. The contrast between Figs.~\ref{fig3} and~\ref{fig4} suggested us the idea that perhaps with nonuniform $\Delta t$ concentrating snapshots where larger errors are,  it might be possible to get the same rate of decay for different values of~$M$. This suggestion proved true as it can be seen in the right-plot in Fig.~\ref{figsigmas}, where  we show the singular values when the times $t_j$ are chosen so that
\begin{equation}
\label{equi_formula}
\int_{t_{j-1}}^{t_j} \left\| \nabla \bu_{h,t}(t,\cdot)\right\|_0^2 \,dt = \hbox{\rm constant},\qquad j=1,\ldots,M.
\end{equation}
It can be seen that, except for~$M=32$, they decay with very similar rates. 
We may also comment that
with this distribution of times for the snapshots, the maximum of~$\| \bu_h-\bu_r\|_0$   is  $4.1\times 10^{-4}$ for $M= 128$, $6.8\times 10^{-4}$ for $M=64$ and~$2.4\times 10^{-3}$ for $M=32$, these last two values being much smaller than those in~Fig.~\ref{fig4} (compare also with the same values of~$M$ in~Table~\ref{table1}).  However, the case of unevenly distributed times to place the snapshots will be studied elsewhere.

The fact that in Fig.~\ref{fig4} larger errors concentrate only in one part of the period while remaining much smaller the rest of the time suggests that another way of keeping errors small while using fewer snapshots may be to use patches of  uniform grids with smaller values of~$\Delta t$ only when errors are large, or, in practice, when time derivatives of the FEM approximation are large (recall Fig.~\ref{derivatives}). To test this idea (for which the analysis in the present paper is easy to adapt) we consider the case where we use $\Delta t=T/128$ in~$[0,T/32]\cup [(23/32)T,T]$ and larger $\Delta t$ in~$[T/32,(23/32)T]$. In Fig.~\ref{fig_patches} we show the results when,  in~$[T/32,(23/32)T]$,  we take $\Delta t=T/64$ (left), which gives $M+1$ snapshots with $M=84$ and~$\Delta t=T/32$ (right), which gives $M+1$ snapshots with $M=62$. The scale of the axes is that of the left plot in~Fig.~\ref{fig4} for better comparison.
 \begin{figure}[h]
 \begin{center}
\includegraphics[height=2.7truecm]{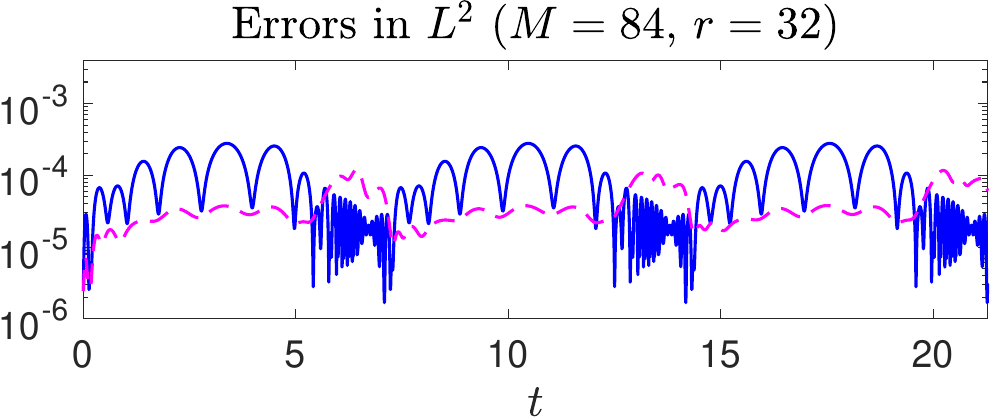}
%\quad
\includegraphics[height=2.7truecm]{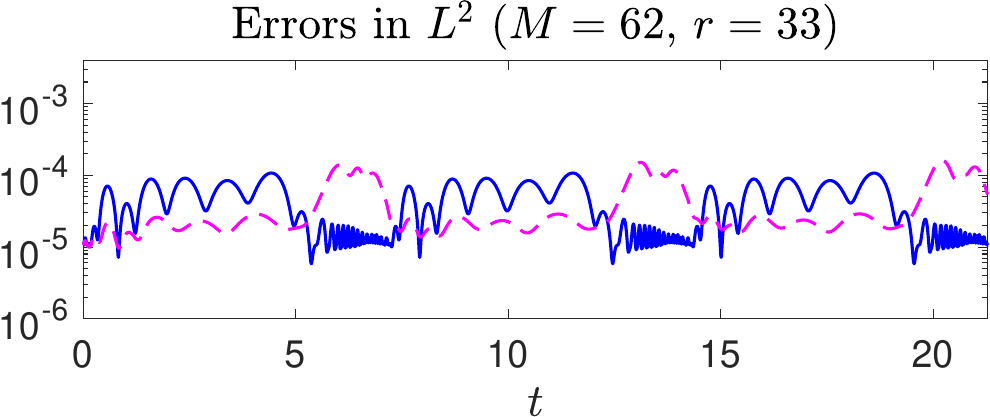}
\begin{caption}{\label{fig_patches} Errors $\left\|\bu_h-P_r\bu_h\right\|_0$ (continuous blue line), $\left\|\bu_r-\bu_h\right\|_0$ (discontinuous line) for~$\Delta t =T/128$ in~$[0,T/32]\cup [(23/32)T,T]$ and, in $[T/32,(23/32)T]$, $\Delta t=T/64$ (left) and $\Delta t= T/32$ (right).  Errors measured every $T/2048$ time units over three periods ($T$ being the period).}
\end{caption}
\end{center}
\end{figure}
We see that the smaller value of~$\Delta t$ on the difficult parts of the time interval decreases the errors almost to the levels of a uniform grid with $\Delta T=T/128$ (left plot in Fig.~4). 
%Further experience with different patches of uniform grids will be reported elsewhere.
{A detailed discussion of further experience with different patches of uniform grids is outside the scope of the present paper and it will be reported elsewhere.}

We now turn our attention to the time derivative case. Here, we found little difference with the finite difference case in terms of the maximum error in time, as it can be checked in Table~\ref{table2}, where we repeat the computations of Table~\ref{table1} for the time derivative case.
\begin{table}
$$
\begin{array}{|r|c|c|c|c|c|}
\hline
\vphantom{\Big|}
M\  & r& \left\| \bu_h-\bu_r\right\|_0 & \left\| \bu_h-P^r\bu_h\right\|_0 & \left\|\nabla (\bu_h-\bu_r)\right\|_0 &  \left\|\nabla (I-P^r)\bu_h \right\|_0 \\ \hline
\vphantom{\big|} 512 &  35&   8.059\times 10^{-5} &    9.985\times 10^{-5}  &   8.596\times 10^{-4}  &   7.195\times 10^{-4} \\ \hline
\vphantom{\big|} 256  &  34&     7.755\times 10^{-5} &    1.000\times 10^{-4}  &   8.587\times 10^{-4}  &   7.180\times 10^{-4} \\ \hline
\vphantom{\big|} 128  & 32&     9.970 \times 10^{-5} &    1.219\times 10^{-4}  &   1.165\times 10^{-3}  &   8.188\times 10^{-4} \\ \hline
\vphantom{\big|} 64  &  23&      4.224\times 10^{-3} &     4.212\times 10^{-3}  &   1.567\times 10^{-1}  &   1.533\times 10^{-1} \\ \hline
\vphantom{\big|} 32  &  17&     1.764\times 10^{-1} &     1.237\times 10^{-1}  &   2.871 &   2.180  \\ \hline
\end{array}
$$
\caption{\label{table2} 
Errors for different values of $M$ measured every $T/2048$ units of time for four periods. Values of $r$ are $r=33$ for $M\ge 128$ and~$r=35$ otherwise. Time derivative case.
}
\end{table}
The values in Table~\ref{table2} are very similar to those in Table~\ref{table1}, except for the smallest values of~$M$ where they are slightly larger
than the corresponding values in~Table~\ref{table1}.
Yet, as Fig.~\ref{fig5} shows, where we show the time derivative case for $M=128$ and $M=64$, there can be some differences when we look at the errors in the whole time interval and not only where they achieve its maximum value. Although the errors for $M=128$ (left plot) are very similar to those on the left plot in~Fig.~\ref{fig3} (only slightly smaller for $\bu_h-P^r\bu_h$), this is not the case for $M=64$ (right plot) where errors, away from where they achieve their peek value,
do not decrease as much as in the finite difference case (left plot in Fig.~\ref{fig4}). For a better comparison, the axes in Fig.~\ref{fig5} are the same as in the corresponding plots in the finite difference case.
 \begin{figure}[h]
 \begin{center}
\includegraphics[height=2.7truecm]{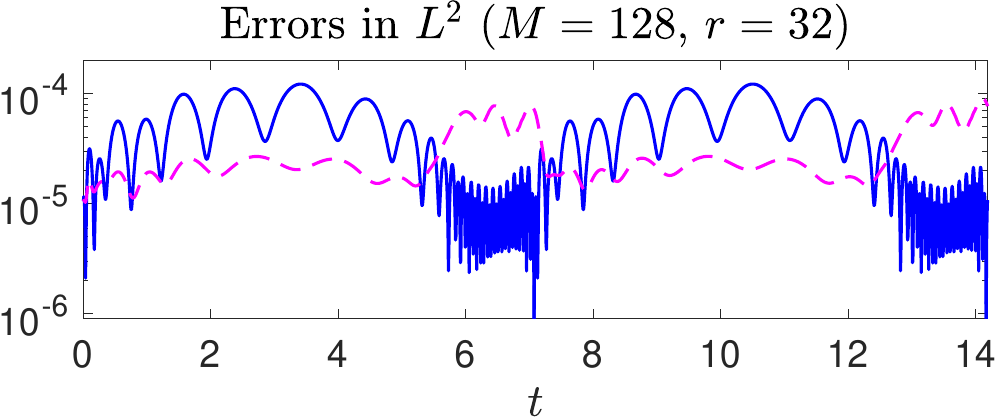}
%\quad
\includegraphics[height=2.7truecm]{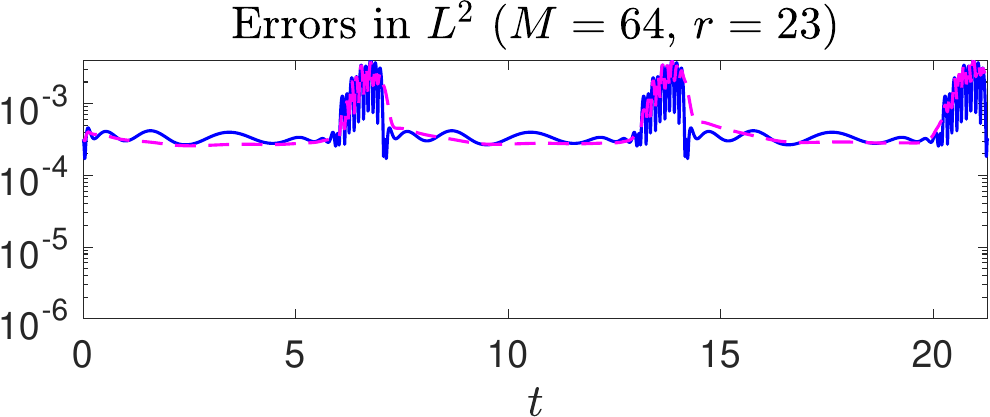}
\begin{caption}{\label{fig5} Errors $\| \bu_h-P^r\bu_h\|_0$ (continuous line) and~$\| \bu_h-\bu_r\|_0$ (discontinuous line) in the time derivative case, for
$M=128$ (left) and $M=64$ (right). Errors measured every $T/2048$ time units over two periods 
{(left) and three periods (right)}, where $T$ is the length of the period.}
\end{caption}
\end{center}
\end{figure}
This may well reflect the presence of extra error terms in the error bounds when time derivatives are used (compare for example~Lemma~\ref{le:maxPr_FD} and
Lemma~\ref{le:maxPr_ut} and \eqref{eq:I_Pr_u_h_dif} and \eqref{eq:I_Pr_u_h_time}). One may wonder why 
%this apparent higher lower bound of the errors
{the impact of the additional terms in the error bounds}
is not seen for $M=128$. The answer is that this is because that lower bound is much smaller
as Fig.~\ref{fig6} shows, where we consider the case $M=128$ but with $r=45$ instead, and where we can see that in the finite difference case the errors, away from peek values, decrease much more than in the time derivative case.
 \begin{figure}[h]
 \begin{center}
\includegraphics[height=2.7truecm]{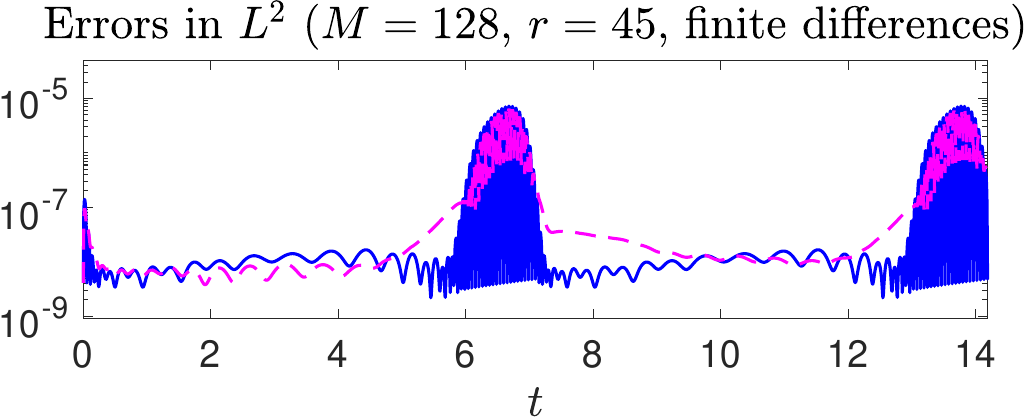}
%\quad
\includegraphics[height=2.7truecm]{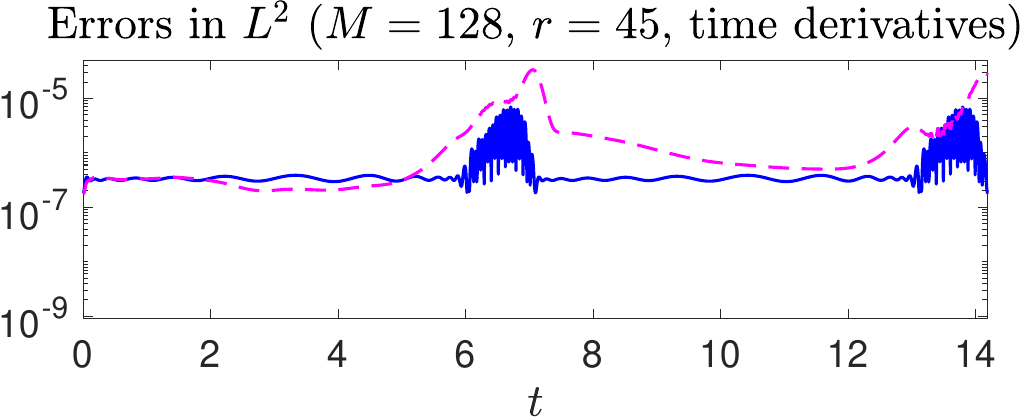}
\begin{caption}{\label{fig6} Errors $\| \bu_h-P^r\bu_h\|_0$ (continuous line) and~$\| \bu_h-\bu_r\|_0$ (discontinuous line) for $M=128$ and $r=45$ in the finite difference case (left plot) and in the time derivative case (right-plot).
Errors measured every $T/2048$ time units over two periods.}
\end{caption}
\end{center}
\end{figure}

\section{Conclusions and future work}

For POD-ROM methods based on snapshots of a FOM  (FEM) approximation at a discrete set of times $(t_n)_{n=0}^M$ in an interval~$[0,T]$, error bounds of the POD-ROM approximation for all $t\in[0,T]$ have been derived in the case of semilinear parabolic equations. The POD-ROM approximation considered is continuous in time, that is, it is the solution of a system of ODEs. The obtained bounds depend on the tail of eigenvalues of the covariance matrix as well as on powers of the distance in time, $t_{n+1}-t_n$, between two consecutive snapshots multiplying some norm of time derivatives of the FEM approximation. The POD basis considered in the present paper is extracted from either first order divided differences of the snapshots or first time derivatives of the FEM approximation together with the snapshot at the initial time (or the mean of the snapshots). This setup allowed to obtain pointwise-in-time error bounds.

In practice, one computes the POD-ROM approximation on a discrete set of times by applying a time integrator to the continuous-in-time POD-ROM discretization.  Typically, this time integrator is the implicit Euler method, but it can be any other one. The analysis in the present paper can be extended to cover the fully-discrete case, allowing for  POD-ROM approximations at times different from those of the snapshots. The analysis of this extension, however, will be carried out in future works.

Numerical studies, first, confirm that POD-ROM methods based on snapshots at a discrete set of times $(t_n)_{n=0}^M$ in an interval~$[0,T]$ obtain accurate approximations for all~$t\in[0,T]$. Second, they show that too much distance between the snapshots degrades the accuracy of the POD-ROM approximation, but beyond a certain (problem-dependent) optimal distance, further decreasing the distance between snapshots does not necessarily improve the accuracy. Third, the numerical studies
suggest that a denser concentration snapshots on those parts of the time interval where time derivatives are large while keeping a smaller concentration elsewhere allows for a significant improvement of the accuracy of the POD-ROM approximation; this also allows for a reduction of the cost of the offline phase. Finally, it turned out that there is not much difference in accuracy of POD-ROM approximations between methods whose POD basis is obtained from divided differences and those where it is computed with time derivatives, although, away from areas where peak errors (or peak values of time derivatives of FOM approximations) are located, the former seem to be more accurate. It remains as topics for future work to find out how to best distribute snapshots to maximize accuracy as well as to choose the optimal number of snapshots.
 \bibliographystyle{abbrv}

\begin{thebibliography}{10}

\bibitem{brenner-scot}
S.~C. Brenner and L.~R. Scott.
\newblock {\em The mathematical theory of finite element methods}, volume~15 of
  {\em Texts in Applied Mathematics}.
\newblock Springer, New York, third edition, 2008.

\bibitem{Cia78}
P.~G. Ciarlet.
\newblock {\em The finite element method for elliptic problems}, volume Vol. 4
  of {\em Studies in Mathematics and its Applications}.
\newblock North-Holland Publishing Co., Amsterdam-New York-Oxford, 1978.

\bibitem{lo-sin}
S.~L. Eskew and J.~R. Singler.
\newblock A new approach to proper orthogonal decomposition with difference
  quotients.
\newblock {\em Adv. Comput. Math.}, 49(2):Paper No. 13, 33, 2023.

\bibitem{nos_der_pod}
B.~Garc\'{\i}a-Archilla, V.~John, and J.~Novo.
\newblock P{OD}-{ROM}s for incompressible flows including snapshots of the
  temporal derivative of the full order solution.
\newblock {\em SIAM J. Numer. Anal.}, 61(3):1340--1368, 2023.

\bibitem{ap_num_let_bdf2}
B.~Garc\'{\i}a-Archilla, V.~John, and J.~Novo.
\newblock Second order error bounds for {POD}-{ROM} methods based on first
  order divided differences.
\newblock {\em Appl. Math. Lett.}, 146:Paper No. 108836, 7, 2023.

\bibitem{bosco-titi-fem}
B.~Garc\'{\i}a-Archilla and E.~S. Titi.
\newblock Postprocessing the {G}alerkin method: the finite-element case.
\newblock {\em SIAM J. Numer. Anal.}, 37(2):470--499, 2000.

\bibitem{Hairer}
E.~Hairer, S.~P. N{\o}rsett, and G.~Wanner.
\newblock {\em Solving ordinary differential equations. {I}}, volume~8 of {\em
  Springer Series in Computational Mathematics}.
\newblock Springer-Verlag, Berlin, second edition, 1993.
\newblock Nonstiff problems.

\bibitem{heyran1}
J.~G. Heywood and R.~Rannacher.
\newblock Finite element approximation of the nonstationary {N}avier-{S}tokes
  problem. {I}. {R}egularity of solutions and second-order error estimates for
  spatial discretization.
\newblock {\em SIAM J. Numer. Anal.}, 19(2):275--311, 1982.

\bibitem{NDF1}
R.~W. Klopfenstein.
\newblock Numerical differentiation formulas for stiff systems of ordinary
  differential equations.
\newblock {\em RCA Rev.}, 32:447--462, 1971.

\bibitem{samu_et_al}
B.~Koc, S.~Rubino, M.~Schneier, J.~Singler, and T.~Iliescu.
\newblock On optimal pointwise in time error bounds and difference quotients
  for the proper orthogonal decomposition.
\newblock {\em SIAM J. Numer. Anal.}, 59(4):2163--2196, 2021.

\bibitem{Ku-Vol}
K.~Kunisch and S.~Volkwein.
\newblock Galerkin proper orthogonal decomposition methods for parabolic
  problems.
\newblock {\em Numer. Math.}, 90(1):117--148, 2001.

\bibitem{novo_rubino}
J.~Novo and S.~Rubino.
\newblock Error analysis of proper orthogonal decomposition stabilized methods
  for incompressible flows.
\newblock {\em SIAM J. Numer. Anal.}, 59(1):334--369, 2021.

\bibitem{Joan-periodic}
J.~S\'{a}nchez, M.~Net, B.~Garc\'{\i}a-Archilla, and C.~Sim\'{o}.
\newblock Newton-{K}rylov continuation of periodic orbits for {N}avier-{S}tokes
  flows.
\newblock {\em J. Comput. Phys.}, 201(1):13--33, 2004.

\bibitem{odesuite}
L.~F. Shampine and M.~W. Reichelt.
\newblock The {MATLAB} {ODE} suite.
\newblock {\em SIAM J. Sci. Comput.}, 18(1):1--22, 1997.
\newblock Dedicated to C. William Gear on the occasion of his 60th birthday.

\end{thebibliography}

\end{document}